\newcommand{\s}[1]{\textup{\textsf{#1}}}
\def\0{\emptyset}
\renewcommand{\L}{\langle L,<\rangle}
\newcommand{\Rr}{\langle \R,<\rangle}
\newcommand{\Rrplus}{\langle\R_{\ge 0},<\rangle}
\newcommand{\Rrminus}{\langle\R_{< 0},<\rangle}
\newcommand{\continuum}{2^{\aleph_0}}
\newcommand{\Lzero}{\langle L_0,<_0\rangle}
\newcommand{\Lone}{\langle L_1,<_1\rangle}
\newcommand{\Q}{\mathbb{Q}}
\newcommand{\R}{\mathbb{R}}
\newcommand{\im}{\mathbin{\hbox{\tt\char'42}}}
\newcommand{\dom}{\text{dom}}
\newcommand{\twoalphalex}{\langle {}^\alpha 2,<_{\text{lex}}\rangle}
\newcommand{\twoomegalex}{\langle {}^\omega 2,<_{\text{lex}}\rangle}
\newcommand{\twoomega}{{}^{\omega}2}
\newcommand{\lessomega}{{}^{<\omega}2}
\newcommand{\lex}{<_\text{lex}}
\newcounter{thmcount}
\newtheorem{thm}[thmcount]{Theorem}
\newtheorem{prop}[thmcount]{Proposition}
\newtheorem{defn}[thmcount]{Definition}
\newtheorem{lemma}[thmcount]{Lemma}
\newtheorem{cor}[thmcount]{Corollary}
\newtheorem{obs}[thmcount]{Observation}
\newtheorem{q}{Question}
\theoremstyle{definition}
\newtheorem{claim}[thmcount]{Claim}
\newcommand*{\claimproofname}{Proof of claim}
\newenvironment{claimproof}[1][\claimproofname]{\begin{proof}[#1]}{\end{proof}}
\newtheorem{thm*}{Theorem}
\newcommand\blfootnote[1]{%
  \begingroup
  \renewcommand\thefootnote{}\footnote{#1}%
  \addtocounter{footnote}{-1}%
  \endgroup
}
\begin{document}
	\title{Infinite-Exponent Partition Relations\\on the Real Line}
	\author{Lyra A.\ Gardiner\\\normalsize{University of Cambridge}\\\normalsize{\texttt{lag44@cam.ac.uk}}}%\author{Lyra Gardiner\\Department of Pure Mathematics and Mathematical\\Statistics \& Trinity College, University of Cambridge,\\Wilberforce Road, Cambridge CB3 0WB, United Kingdom\\\texttt{lag44@cam.ac.uk}}
	\date{}
	\maketitle
    
	The theory of infinite-exponent partition relations on ordinals is well developed. The Axiom of Choice implies that no such relation can hold; as such, we work in \s{ZF} without the Axiom of Choice throughout this paper.
    
    We extend the theory of infinite-exponent partition relations to arbitrary linear order types, with a particular focus on the real number line. We give a complete classification of all consistent partition relations on the real line with countably infinite exponents (Theorem \ref{mainthm}) and a characterisation of the statement ``no uncountable-exponent partition relations hold on the real line" (Theorem \ref{solovay:theorem}).\blfootnote{2020 \textit{Mathematics Subject Classification.} 03E02, 03E25, 05C55, 05D10, 06A05.}\blfootnote{\textit{Key words and phrases.} Linear orders, partition relations, Ramsey theory, Axiom of Choice.}%countable linear orders which can consistently be the exponent in a partition relation on the real line, as well as exactly when such relations hold, and we characterise the circumstances under which there exist uncountable linear orders which can be the exponent in such a relation.
	%In this note we show that what at first glance looks to be purely a problem in combinatorial set theory is in fact deeply related to linear orders. We consider the combinatorial question ``for which cardinals $\kappa$ can it hold that $\continuum \rightarrow (\kappa)^\kappa$?" and show that the answer is intimately connected to the order types of subsets of the real line. We show an equivalence between a statement of this type and a statement about linear orders and proceed to classify the countable linear orders which can consistently satisfy a certain combinatorial property.
	\section{Introduction}\label{introduction:section}
    \subsection{Background and main results}\label{background:subsection}
    Partition relations were introduced by Erdős and Rado in \cite{erdosrado}, where they proved that, assuming the Axiom of Choice, the exponent of any such relation cannot be an infinite cardinal; in \s{ZF} without \s{AC}, however, so-called \textit{infinite-exponent partition relations} (IEPRs) can consistently hold. In this paper we work in \s{ZF} unless otherwise stated and extend the study of IEPRs to the setting of linear orders. Our main results are the following two theorems, whose notation will be introduced below:
	\begin{thm}
		\label{mainthm}
		Statements of the form $\Rr \rightarrow (\tau)^\tau$ with $\tau$ a countably infinite order type can hold if and only if $\tau$ is one of the order types $\omega + k$ or $k + \omega^*$ for some $k \in \omega$, and for these $\tau$, 
		\[\Rr \rightarrow (\tau)^\tau \iff \omega \rightarrow(\omega)^\omega.\]
	\end{thm}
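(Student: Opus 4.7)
\medskip

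My plan is to establish the equivalence $\Rr\to(\tau)^\tau\iff\omega\to(\omega)^\omega$ for $\tau=\omega+k$ in two directions, together with the negative assertion $\Rr\not\to(\tau)^\tau$ in \s{ZF} for every other countably infinite $\tau$; the $k+\omega^*$ case will follow from the $\omega+k$ case by reversing the order on $\R$. For the forward direction $\omega\to(\omega)^\omega\Rightarrow\Rr\to(\omega+k)^{\omega+k}$, I fix a template $H_0=\{z_n:n\in\omega\}\cup\{y_0,\dots,y_{k-1}\}\subseteq\R$ with $(z_n)$ strictly increasing and $\sup z_n<y_0<\cdots<y_{k-1}$, so $H_0$ has order type $\omega+k$. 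The bijection $A\mapsto\{z_n:n\in A\}\cup\{y_0,\dots,y_{k-1}\}$ from $[\omega]^\omega$ to $[H_0]^{\omega+k}$ pulls any $c:[\R]^{\omega+k}\to 2$ back to a coloring $c_0:[\omega]^\omega\to 2$. I then apply $\omega\to(\omega)^\omega$ to get an infinite $A_0\subseteq\omega$ with $c_0$ constant on $[A_0]^\omega$, and take $H=\{z_n:n\in A_0\}\cup\{y_0,\dots,y_{k-1}\}$. The essential order-theoretic observation is that every $S\in[H]^{\omega+k}$ has $k$-part exactly $\{y_0,\dots,y_{k-1}\}$, since each $z_n\in H$ has only finitely many $z$-predecessors in $H$ and so cannot appear among the top $k$ elements of a type-$(\omega+k)$ subset; thus $S$ is forced into the canonical form, and $c(S)=c_0(A_S)$ is constant across $S\in[H]^{\omega+k}$.

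For the reverse direction I would split the argument into two steps. First, $\Rr\to(\omega+k)^{\omega+k}\Rightarrow\Rr\to(\omega)^\omega$: given $c:[\R]^\omega\to 2$, define $\tilde c(S)=c(S_\omega)$ on $[\R]^{\omega+k}$ with $S_\omega$ the $\omega$-part of $S$; a $\tilde c$-monochromatic $H$ of type $\omega+k$ then makes $H_\omega$ itself $c$-monochromatic, since every $T\in[H_\omega]^\omega$ extends to $T\cup H_k\in[H]^{\omega+k}$ with $\tilde c$-value $c(T)$. Second, $\Rr\to(\omega)^\omega\Rightarrow\omega\to(\omega)^\omega$: given $c':[\omega]^\omega\to 2$, I would extend to $\tilde c:[\R]^\omega\to 2$ by setting $\tilde c(S)=c'(S\cap\omega)$ whenever $S\cap\omega$ is infinite, and defining $\tilde c$ on the remaining $\omega$-type subsets so as to ensure that any $\tilde c$-monochromatic $H$ of type $\omega$ necessarily satisfies $|H\cap\omega|=\aleph_0$, whereupon $H\cap\omega$ is $c'$-monochromatic. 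I anticipate the main technical wrinkle here to be arranging $\tilde c$ on $\omega$-type subsets disjoint from or nearly disjoint from $\omega$, so as to rule out the pathological case of a monochromatic $H$ missing $\omega$.

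For the negative direction in \s{ZF}, the excluded order types split into the scattered types with at least two order-theoretic limit elements (such as $\omega+\omega$, $\omega^2$, and $\omega+\omega^*$) and the non-scattered types, which necessarily contain a copy of $\Q$. In the first case, a $\tau$-subset of $\R$ is parametrized by several independently-varying limit positions, and I would construct a \s{ZF}-definable coloring that distinguishes subsets by a property of these positions relative to a fixed countable dense subset such as $\Q$ — for example, colouring by whether a gap between two limit elements is rational, or by the parity of a canonical index. In the second case, I would exploit the density and homogeneity of $\Q$ within a $\tau$-subset of $\R$ to produce a coloring of the type familiar from the classical obstruction to partition relations on dense linear orders. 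The chief obstacle in this part of the proof is exhibiting, for each excluded $\tau$, a single explicit coloring that is non-monochromatic on every candidate homogeneous $H\subseteq\R$ of type $\tau$; I expect the proof to organize into a case analysis indexed by the number, placement, and type of the limit elements of $\tau$, together with a separate treatment when $\tau$ is non-scattered.
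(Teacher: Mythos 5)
Your reduction of $\Rr \rightarrow (\omega+k)^{\omega+k}$ to $\Rr \rightarrow (\omega)^\omega$ is correct in both directions (the template $H_0$ with the observation that the top $k$ elements of any $S \in [H]^{\omega+k}$ are forced to be $\{y_0,\dots,y_{k-1}\}$, and conversely colouring by the $\omega$-part), and reversing the order of $\R$ handles $k+\omega^*$. But the step $\Rr \rightarrow (\omega)^\omega \Rightarrow \omega \rightarrow (\omega)^\omega$, which you defer as a ``technical wrinkle'', is the entire content of that lemma, and your plan for it is circular. Setting $\tilde c(S) = c'(S \cap \mathbb N)$ when $S \cap \mathbb N$ is infinite leaves you needing a colouring, with no homogeneous set, of the type-$\omega$ subsets of $\R$ that avoid $\mathbb N$; but $(0,1)$ is order-isomorphic to $\R$, so that residual problem \emph{is} the problem you started with. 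What is needed is a canonical map from arbitrary $\omega$-sequences of reals to infinite subsets of $\omega$ along which the bad colouring $c'$ can be pulled back. The paper uses $N(x,y) = \min\{n : q_n \in [x,y)\}$ and colours $\bar x$ by $c'(\langle N(x_k,x_{k+1}) \rangle_k)$ when this sequence is increasing; the two facts that make this work --- every $\omega$-sequence of reals has an $N$-increasing subsequence, and for $N$-increasing $\bar x$ every infinite subset of $N(\bar x)$ is realised as $N(\bar z)$ for a subsequence $\bar z$ of $\bar x$ --- are the key ideas and are absent from your proposal.

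The negative direction also has a genuine gap. Your suggested colourings do not work as stated: for example, if $H = H_1 + H_2$ has order type $\omega+\omega$, then every $S \in [H]^{\omega+\omega}$ satisfies $\sup(S \cap H_1) = \sup H_1$ (an infinite subset of a type-$\omega$ set is cofinal in it), so any colouring depending only on the position or rationality of the break point is constant on $[H]^{\omega+\omega}$ and detects nothing. The paper avoids your case analysis on limit elements entirely. It proves one proposition: if $\tau = \sigma_0 + \sigma_1$ with both $\sigma_i$ inexact (admitting a non-identity order-embedding into themselves), then $\Rr \centernot\rightarrow (\tau)^\tau$, via the colouring that compares, for the two halves $A^-$ and $A^+$ of $A$ split at a canonically chosen separating point, the least index of a rational interval that meets each half ``removably''; the point is that either half can be shrunk independently to push its index arbitrarily high while fixing the other, so no $A$ is homogeneous. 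Combined with the Dushnik--Miller theorem that every countably infinite linear order is inexact, a short argument shows that the only countably infinite types \emph{not} admitting such a decomposition are $\omega+k$ and $k+\omega^*$. Some device of this kind --- producing from any single copy of $\tau$ two subcopies of opposite colour --- is what your sketch is missing, and without it the claimed case analysis cannot be completed.
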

	\begin{thm}
		\label{solovay:theorem}
		The statement ``$\Rr \centernot \rightarrow (\tau)^\tau$ for all uncountable order types $\tau$" holds if and only if every infinite set of reals is inexact. Moreover, this is implied by the statement that every set of reals has the Perfect Set Property.%Relative to an inaccessible cardinal, it is consistent with \s{ZF} that $\Rr \centernot \rightarrow (\tau)^\tau$ for all uncountable order types $\tau$. Moreover, this is the case if and only if there are no infinite exact real types, a statement which holds in Solovay's model and in models of \s{ZF + AD}.
	\end{thm}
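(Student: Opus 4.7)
The plan is to establish the biconditional by proving each direction via its contrapositive, then to verify the PSP clause directly.

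First, I would treat the direction that some exact infinite $A \subseteq \R$ implies $\Rr \to (\tau)^\tau$ for some uncountable $\tau$. The order type of $A$ must be uncountable --- countable cases are excluded by Theorem \ref{mainthm} together with the definition of exact. The goal is then to show that $A$ witnesses the partition relation for $\tau = \mathrm{ot}(A)$: for every $c : [\R]^\tau \to 2$, the rigidity encoded in exactness should force $c \upharpoonright [A]^\tau$ to take a constant value on some $\tau$-subset of $A$, giving the required monochromatic set somewhere in $\R$.

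Second, I would show the converse: if $\Rr \to (\tau)^\tau$ holds for some uncountable $\tau$, then there is an exact infinite set of reals. I would define a 2-colouring of $[\R]^\tau$ that detects exactness via an appropriate combinatorial invariant, apply the partition relation, and argue that the resulting monochromatic set $H$ is exact. One colour class should give exactness of $H$ directly; the other colour class must be ruled out, by showing that a type-$\tau$ set all of whose type-$\tau$ subsets fail to be exact cannot itself survive the colouring in that colour.

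Third, for the ``moreover'' clause, assume PSP. Countably infinite sets of reals are inexact by the definition of exact, which requires uncountable type. For uncountable $A \subseteq \R$, PSP yields a perfect subset $P \subseteq A$. A perfect set admits proper order-preserving self-embeddings and contains uncountable subsets realising many distinct order types (copies of $\Q$, copies of the irrationals, transfinite combinations thereof), so the uniformity required for exactness fails on $P$, and hence on $A$.

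The main obstacle will be the first direction --- extracting a monochromatic type-$\tau$ subset for \emph{every} colouring of $[\R]^\tau$ from the hypothesis that a single exact $A$ exists. This is where the combinatorial strength of ``exact'' is used in its most substantive way; one likely needs a Ramsey-style pruning within $[A]^\tau$ or an indiscernibility-type argument, and verifying that such a construction always terminates in a set of full type $\tau$ (rather than a proper initial or cofinal piece) is the technical heart of the result.
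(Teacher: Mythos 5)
Your overall architecture (two contrapositives plus a direct PSP argument) matches the paper's, but each of the three steps has a genuine gap. For the direction ``an exact infinite $A$ implies a positive relation'': you have inverted the difficulty. If $A$ is exact with order type $\tau$, then $[A]^\tau = \{A\}$ by the very definition of exactness, so $A$ is homogeneous for \emph{every} colouring of $[\R]^\tau$ outright; no Ramsey-style pruning or indiscernibility argument is needed, or even meaningful, here (this is Observation \ref{exacttrivial:obs}). Your closing paragraph identifies this step as the technical heart of the theorem, which indicates you have not noticed that $[A]^\tau$ is a singleton. (Also, uncountability of $\tau$ comes from Dushnik--Miller, Theorem \ref{dushnikmillercountable:thm}, rather than from Theorem \ref{mainthm}.)

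The real work is in the other direction, and there your plan --- a single colouring of $[\R]^\tau$ that ``detects exactness'' so that a homogeneous set is itself exact --- does not work as described: exactness of $H$ is not a property of the individual elements of $[H]^\tau$, so no $2$-colouring can certify it via homogeneity in the way you suggest, and you give no candidate invariant. The paper instead proves the implication directly: if every infinite set of reals is inexact, then every uncountable real type splits as a sum $\sigma_0+\sigma_1$ of two inexact types (Corollaries \ref{thm1negative:cor} and \ref{thm2:cor}, via the separation-point argument), and Proposition \ref{sumofinexacts:prop} then builds a colouring with no homogeneous set by comparing, across the separation point $x_A$, the least indices of rational intervals meeting $A^-$ and $A^+$ ``removably''; the point is that the two halves can be shrunk independently so as to flip the colour of any given $A$. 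You need some version of this decomposition-plus-colouring idea. Finally, in the PSP clause, your inference ``exactness fails on $P$, hence on $A$'' is a non sequitur: inexactness does not pass from a subset to a superset (Dushnik--Miller's \s{ZFC} example is an exact set of reals of size $\continuum$, all of whose countably infinite subsets are inexact by Theorem \ref{dushnikmillercountable:thm}). The correct route (Proposition \ref{pspcharacterisation:prop} and Corollary \ref{pspsolovay:cor}) is that $P$ contains a copy of $\Rr$, into a \emph{proper} subset of which $A$ itself embeds by universality of $\Rr$ for real types; note that this step as carried out in the paper uses \s{AC}$_\omega(\R)$.
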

	The structure of the paper is as follows. In \S\ref{terminologyandnotation:subsection}, we introduce the notation and terminology used in these theorems; in \S\ref{choice:subsection}, we explain in detail why our base theory is \s{ZF} without the Axiom of Choice; in \S\ref{motivation:subsection} we then motivate our problem of determining when $\Rr \rightarrow (\sigma)^\tau$ for linear order types $\sigma,\tau$ by explaining how we arrived at it from a question in combinatorial set theory which was not immediately obviously related to linear orders. The equivalence $\Rr \rightarrow (\tau)^\tau \iff \omega \rightarrow (\omega)^\omega$ for $\tau$ of the form $\omega + k$ or $k + \omega^*$ with $k \in \omega$ will be proved in \S\ref{positivecountable:section}; the remainder of Theorem \ref{mainthm} (i.e.\ the result that $\Rr \centernot \rightarrow (\tau)^\tau$ for all other countably infinite $\tau$) as well as Theorem \ref{solovay:theorem} will be proved in \S\ref{negative:section}. In \S\ref{largerhomogeneous:section} we fully classify the relation $\Rr \rightarrow (\sigma)^\tau$ with $\tau$ countably infinite under the assumption of Countable Choice for sets of reals. With the notable exception of Proposition \ref{schilhan:prop} in \S3, our arguments are largely combinatorial rather than set-theoretic in nature and can be followed without knowledge of advanced set theory.
	
	\subsection{Terminology and notation}\label{terminologyandnotation:subsection}
    A \textit{(linear) order type} is an isomorphism class of linear orders; we use the letters $\sigma$, $\tau$, $\varphi,\psi$ for these.\footnote{As defined these are not sets; this is not important for our purposes, but we can treat them as sets by means of Scott's trick.} For $\L$ a linear order, say that $\L$ \textit{has order type} $\sigma$ or \emph{is ordered as} $\sigma$ if $\L$ belongs to the equivalence class $\sigma$. Whenever a property of linear orders is invariant under isomorphism, it can be treated as a property of the corresponding order type. Order types are quasi-ordered by the \textit{embeddability relation} $\le$, where $\sigma \le \tau$ if whenever $\langle L_0,<_0\rangle$ has order type $\sigma$ and $\langle L_1, <_1\rangle$ has order type $\tau$, there is an $i: L_0 \to L_1$ with, for $x,y \in L_0$, $i(x) <_1 i(y)$ iff $x <_0 y$. For $\sigma$ an order type, $\sigma^*$ denotes its reverse, i.e.\ the order type of $\langle L,>\rangle$ if $\sigma$ is the order type of $\L$. Ordinals will be treated as order types rather than as transitive sets, and we use the letters $\alpha$, $\beta$, $\gamma$, $\delta$ for them. We use the letters $\kappa$, $\mu$, $\nu$ for cardinals, which will be treated as equivalence classes of sets in bijection with each other rather than as initial ordinals, as we are working in \s{ZF} and not every set we consider is well-orderable.\footnote{See footnote 1.} The letter $\lambda$ refers to the order type of the reals, treated as the real line $\Rr$.\footnote{This makes some of our arguments slightly easier to phrase, but the reader is welcome to think of the paper as being about whichever version of the reals they prefer (e.g.\ Baire space, Cantor space); all three of $\Rr$, $\langle {}^\omega\omega,\lex\rangle$, and $\twoomegalex$ are embeddable in each other, so all of the results of the paper still hold.} We will sometimes write $\R_{\ge 0}$ for the set $[0,\infty)$ and $\R_{< 0}$ for the set $(-\infty,0)$. %as usual, $\omega$, $\zeta$, $\eta$, and $\lambda$ refer to the order types of $\mathbb N$, $\Z$, $\Q$, and $\R$, respectively, under their usual orders;
    Subsets of $\R$ will be treated as linear orders equipped with the induced suborder. A \emph{real type} is an order type of a subset of $\R$, i.e.\ an order type $\tau$ with $\tau \le \lambda$. The letter $\omega$ will usually refer to the order type of $\langle \mathbb N,<\rangle$, but as is standard in set theory, we will sometimes write $\omega$ to mean the set $\mathbb N = \{0,1,2,\dots\}$ itself -- there will never be any ambiguity as to which sense of $\omega$ is meant. We will write $\aleph_0$ to mean the cardinality of $\mathbb N$, distinct from the order type $\omega$. Addition will mean addition of linear orders in the usual sense of concatenation, with the one exception of the definition of indecomposable cardinals just before Proposition \ref{indecomposable:prop}, where ``$+$" will be used to refer to addition of cardinals. We use the set-theoretic notation for images of functions: for $f$ a function, $X \subseteq \dom f$, 
    \[f \im X \coloneqq \{f(x) : x \in X\}.\]
	
	For $X$ a set and $\kappa$ a cardinal, write $[X]^\kappa$ to mean the set of subsets of $X$ of cardinality $\kappa$. Similarly, for $\L$ a linear order, $\sigma$ an order type, write $[\L]^\sigma$ to mean the set of all subsets of $L$ which have order type $\sigma$ in the induced suborder. When the order is understood we will abbreviate this to $[L]^\sigma$.
	
	Most of our results concern \textit{partition relations}, which we express with the Erdős-Rado \emph{partition relation symbol}. We first define this for relations on cardinals: for $\mu \ge \nu \ge \kappa$ cardinals and $\chi$ a set, the partition relation
	\[\mu \rightarrow (\nu)^\kappa_\chi,\]
	read ``$\mu$ arrows $\nu$ to the $\kappa$ with $\chi$ colours", is the statement that, for any set $X$ of cardinality $\mu$ and for any function $F: [X]^\kappa \to \chi$, thought of as a \emph{colouring} of the $\kappa$-sized subsets of $X$, there is a set $H \in [X]^\nu$ which is \emph{homogeneous} (or \emph{monochromatic}) for $F$, in the sense that $\left|F \im [H]^\kappa\right| = 1$, i.e.\ $F$ sends all elements of $[H]^\kappa$ to the same ``colour". The negation of this statement is written with $\centernot \rightarrow$ in place of $\rightarrow$.
	
	We now define partition relations for linear order types, which are the main focus of this paper. In an analogous way to the above, for $\varphi$, $\sigma$, and $\tau$ order types with $\varphi \ge \sigma \ge \tau$ and $\chi$ a set, we define
	\[\varphi \rightarrow (\sigma)^\tau_\chi\]
	to be the statement that for any linear order $\L$ of order type $\varphi$ and any function $F: [L]^\tau \to \chi$ (again thought of as a colouring of the copies of $\tau$ in $\L$), there is some $H \in [L]^\sigma$ which is homogeneous for $F$ in the sense that $\left|F \im [H]^\tau\right| = 1$. For ease of reading we will often write $\L \rightarrow (\sigma)^\tau_\chi$ in place of $\varphi \rightarrow (\sigma)^\tau_\chi$, where $\L$ is a linear order of order type $\varphi$. Our main object of study in this paper is the relation $\Rr \rightarrow (\sigma)^\tau_2$, where $\tau$ is an infinite order type.

    We have in a sense overloaded this arrow notation, as it is used both for partition relations which refer only to cardinalities and for partition relations which refer to order types; nonetheless, in any given setting it will be clear which type of partition relation is meant by considering whether the terms in the notation are linear order types or not. From Lemma \ref{omega:lemma} onwards, all of the partition relations we consider will refer to order types. A point on notation: when writing partition relations it is common to omit the set of colours $\chi$ when $\chi = 2 = \{0,1\}$. Throughout this paper we make use of this abbreviation, but we remark here that for all of our results, we could just as well have used any $2 \le \chi < \omega$.

    The separation of the four terms in the partition relation notation into two sides of an arrow serves to highlight the ``monotonicity" of the partition relation. By this we mean that a positive relation remains positive if we \emph{increase} the term on the left-hand side or, usually, if we \emph{decrease} any of the terms on the right-hand side (with respect to either the usual ordering of cardinals or the embeddability relation on order types). In particular, much of our work shows $\Rr \centernot \rightarrow (\tau)^\tau$ for certain order types $\tau$, and the failure of this ``minimal" relation means that $\Rr \centernot \rightarrow (\sigma)^\tau_\chi$ for all $\sigma, \chi$. We remark, however, that decreasing the \emph{exponent} (i.e.\ $\kappa$ or $\tau$ above) does not necessarily preserve the relation (as e.g.\ it is consistent that $\Rr \centernot \rightarrow (\omega + \omega)^\omega$ but $\Rr \rightarrow (\omega + \omega)^{\omega +1}$; we will see in \S\ref{largerhomogeneous:section} that this holds in Solovay's model).
	\subsection{A note on the Axiom of Choice}\label{choice:subsection}
	As mentioned, assuming the Axiom of Choice, relations of the form
	\[\mu \rightarrow (\nu)^\kappa\] in which the exponent $\kappa$ is infinite are necessarily false; see e.g.\ \cite[p.\ 434]{erdosrado}. The same argument easily yields the failure of all relations of the form
	\[\varphi \rightarrow (\sigma)^\alpha\]
	whose exponent $\alpha$ is any infinite \emph{ordinal}. Relations with exponent an infinite non-wellfounded order type are not all necessarily false in \s{ZFC}, %as we will see in \S\ref{negative:section}, 
    but all such relations involve a certain triviality, in a sense which we make precise below; the argument from \cite{erdosrado} can be modified to yield a similar result for linear orders (Proposition \ref{choiceieprstrivial:prop}).
    \begin{defn}\label{exact:defn}
        A linear order $\L$ is \emph{exact} if the only order-preserving function from $\L$ to itself is the identity function; equivalently, if $L$ has no proper subsets which are order-isomorphic to $\L$ in the induced suborder.\footnote{To the best of our knowledge, this use of the word ``exact" is due to Ginsburg, introduced in \cite{ginsburgremarks} and \cite{ginsburg}.} Say $\L$ is \emph{inexact} if it is not exact.
    \end{defn}
    Clearly every finite linear order is exact and every infinite ordinal or reverse ordinal is inexact; we return to the question of the existence of infinite exact orders in \S\ref{negative:section}. Exact order types yield positive infinite-exponent partition relations in a trivial sense:
    \begin{obs}\label{exacttrivial:obs}
        Let $\tau$ be an exact order type and let $\L$ be a linear order with $[L]^\tau$ non-empty. Then
        \[\L \rightarrow (\tau)^\tau.\]
    \end{obs}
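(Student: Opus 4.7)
The plan is to observe that exactness makes the conclusion essentially vacuous: if $\tau$ is exact, then any subset of $L$ of order type $\tau$ has no proper sub-copies of $\tau$, so the set of $\tau$-copies sitting inside it is a singleton, and hence is trivially monochromatic for any colouring.

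More precisely, suppose $\chi$ is a set of colours and $F : [L]^\tau \to \chi$ is any colouring. By hypothesis we may pick some $H \in [L]^\tau$. Since $H$ has order type $\tau$ in the induced suborder and $\tau$ is exact, the second clause of Definition \ref{exact:defn} tells us that $H$ has no proper subset of order type $\tau$, i.e.\ the only element of $[H]^\tau$ is $H$ itself. Therefore $F \im [H]^\tau = \{F(H)\}$, which has cardinality $1$, so $H$ is homogeneous for $F$. This witnesses $\L \rightarrow (\tau)^\tau$ (and in fact $\L \rightarrow (\tau)^\tau_\chi$ for any set of colours $\chi$).

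There is no real obstacle here; the only thing to check is that the equivalence of the two clauses in Definition \ref{exact:defn} gives exactly what is needed. If $\L'$ is an order-preserving copy of $\tau$ sitting as a subset of $H$, then composing the isomorphism $\tau \to H$ with the inclusion $\L' \hookrightarrow H$ and the inverse isomorphism $H \to \tau$ produces an order-preserving self-map of $\tau$; by exactness this is the identity, so $\L' = H$. Hence the argument above is correct, and the observation follows with no further work.
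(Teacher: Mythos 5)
Your proof is correct and is essentially identical to the paper's own argument: both observe that exactness forces $[H]^\tau = \{H\}$ for any $H \in [L]^\tau$, so any such $H$ is trivially homogeneous. The extra paragraph verifying the equivalence of the two clauses of Definition \ref{exact:defn} is sound but unnecessary, since the paper states that equivalence as part of the definition.
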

    \begin{proof}
        Let $F : [L]^\tau \to 2$ be any colouring; then any $H \in [L]^\tau$ is homogeneous for $F$, as $[H]^\tau = \{H\}$.
    \end{proof}

    In \s{ZFC}, this is in fact an equivalence:
    \begin{prop}\label{choiceieprstrivial:prop}\emph{\textsf{(ZFC)}}
        Let $\tau$ be an inexact order type and let $\L$ be a linear order with $[L]^\tau$ non-empty. Then
        \[\L \centernot \rightarrow (\tau)^\tau.\]
    \end{prop}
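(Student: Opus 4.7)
The plan is to construct a colouring $F : [L]^\tau \to 2$ ensuring that, for every $H \in [L]^\tau$, $[H]^\tau$ contains elements of both colours. The first step is to exploit inexactness: fix $L_0$ of type $\tau$ together with a strict order-preserving injection $f : L_0 \to L_0$ with $f(L_0) \subsetneq L_0$, and then, using \s{AC}, choose for each $H \in [L]^\tau$ an order-isomorphism $\phi_H : L_0 \to H$ and set $\pi_H \coloneqq \phi_H \circ f \circ \phi_H^{-1}$, so that $\pi_H(H) \subsetneq H$ is a proper sub-copy of $H$ belonging to $[H]^\tau$.

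The second step is graph-theoretic. Let $\vec G$ be the directed graph on $[L]^\tau$ in which each $H$ has a single out-edge to $\pi_H(H)$, and let $G$ be its underlying undirected graph. Since $\pi_H(H) \subsetneq H$, $\vec G$ has no directed cycles; moreover each edge of $G$ has an unambiguous direction in $\vec G$, because $\pi_H(H) = H'$ and $\pi_{H'}(H') = H$ together would force $H \subsetneq H' \subsetneq H$. I claim $G$ is a forest: any undirected cycle of length $k$ consists of $k$ edges, each of which is the out-edge in $\vec G$ of one of its two endpoints, and since each cycle vertex has at most one out-edge in $\vec G$ at all, a pigeonhole argument forces each cycle vertex's unique out-edge to be a cycle edge. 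Combined with the fact that each edge has a unique direction, this forces the cycle to admit a consistent directed orientation, producing a directed cycle in $\vec G$ -- a contradiction.

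Once $G$ is known to be a forest, it is $2$-colourable in \s{ZFC}: choose a root in each connected component via \s{AC} and colour by the parity of the distance to the root. This yields $F : [L]^\tau \to 2$ with $F(H) \neq F(\pi_H(H))$ for every $H \in [L]^\tau$, and since both $H$ and $\pi_H(H)$ lie in $[H]^\tau$, no $H$ is monochromatic, witnessing $L \centernot\rightarrow (\tau)^\tau$. The main obstacle is the forest claim; once that combinatorial fact about functional acyclic digraphs is established, the rest of the argument is routine graph colouring.
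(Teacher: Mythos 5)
Your proof is correct, and it takes a genuinely different route from the paper's. The paper well-orders $[L]^\tau$ and runs a transfinite recursion: at each stage it attempts to colour a $\subsetneq$-descending $\omega$-chain of copies of $\tau$ alternately $0,1,0,1,\dots$, and then argues that every $A \in [L]^\tau$ must contain some alternately coloured member, hence is not homogeneous. You instead make a single application of Choice to select, for each $H \in [L]^\tau$, one proper sub-copy $\pi_H(H) \in [H]^\tau$, and reduce everything to the purely combinatorial fact that the underlying graph of an acyclic, loop-free functional digraph with no doubly directed edges is a forest and hence properly $2$-colourable; the resulting inequality $F(H) \neq F(\pi_H(H))$ kills homogeneity since both sets lie in $[H]^\tau$. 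Your forest claim is sound: in an undirected cycle on $k$ vertices, each of the $k$ edges is the out-edge of exactly one of its endpoints (exactly one, by your observation that a doubly directed edge would give $H \subsetneq H' \subsetneq H$), so the ownership map from cycle edges to cycle vertices is injective by the out-degree bound and therefore bijective; propagating ownership around the cycle from any starting vertex then forces a coherent orientation, i.e.\ a directed cycle, contradicting $\supsetneq$-acyclicity. What your version buys is modularity: the order theory enters only in producing the successor map $H \mapsto \pi_H(H)$, and the colouring is then routine graph colouring, which makes the generalisation the paper mentions (to structural partition relations whose exponent has a proper isomorphic substructure) completely transparent. What the paper's version buys is a colouring defined by an explicit recursion with no auxiliary graph; the respective uses of Choice (well-ordering $[L]^\tau$ and repeatedly choosing descending chains, versus choosing sub-copies and component roots) are comparable. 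One presentational remark: the isomorphisms $\phi_H$ are unnecessary -- it suffices to choose directly, for each $H$, some element of $[H]^\tau \setminus \{H\}$, a set which is non-empty precisely because $\tau$ is inexact.
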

    We first prove the following lemma.
    \begin{lemma}\label{increasingomegaseqs:lemma}
        Let $\L$ be an inexact linear order and write $\sigma$ for its order type. Then there exists an $\omega$-sequence $ L_0  \subsetneq  L_1  \subsetneq  L_2  \subsetneq \dots$, where each $\langle L_n, <\rangle \in [L]^\sigma$.
    \end{lemma}
    \begin{proof}
        Since $\L$ is inexact, there exists some $f: L \to L$ order-preserving other than the identity. Fix such an $f$, and let $x \in L$ be a point moved by $f$, wlog $x < f(x)$. Then in particular, for any $n \in \omega$, $f^n(x) < f^{n+1}(x)$, with the convention that $f^0$ is the identity. For $n \in \omega$, write $I_n \coloneqq [f^n(x), f^{n+1}(x))$, and write $L' \coloneqq \bigcup_{n \in \omega}I_n$. Write $\sigma'$ for the order type of $\langle L',<\rangle$.
        
        Observe that $L'$ is an interval of $\L$ and so it suffices to prove the statement for the order $\langle L',<\rangle$: if $ L'_0 \subsetneq  L'_1 \subsetneq  L'_2 \subsetneq \dots$ is such that each $\langle L'_n, <\rangle \in [L']^{\sigma'}$, we obtain $ L_0 \subsetneq L_1  \subsetneq  L_2 \subsetneq \dots$ as desired by setting $L_n \coloneqq L'_n \cup (L \setminus L')$.
    
        We find such $L'_n$ in the following way: for $n \in \omega$, set
        \[L'_n \coloneqq\bigcup_{m \le n} I_m \cup   \bigcup_{m > n} f^{m - n} \im I_m.\]
        Certainly each $\langle L'_n,< \rangle \in [L']^{\sigma'}$; it remains to show that each $L'_n \subsetneq L'_{n+1}$. Observe that if $m + k = m' + k'$ and $m \le m'$, then \[f^k \im I_m \subseteq f^{k'} \im I_{m'} \subseteq I_{m + k}.\] In particular, observing that $L'_{n+1}$ can be written as
        \[L'_{n+1} = \bigcup_{m \le n} I_m \cup I_{n+1} \cup \bigcup_{m > n} f^{m - n - 1} \im I_{m+1},\]
        we see that $L'_n \subseteq L'_{n+1}$, as each $f^{m - n} \im I_{m} \subseteq f^{m - n - 1} \im I_{m+1}$. To see that this containment is strict, note that $L'_n \,\cap\, I_{n+1} = \emptyset$, while $L'_{n+1} \,\cap\, I_{n+1} = I_{n+1}$.
    \end{proof}
    \begin{proof}[Proof of Proposition \ref{choiceieprstrivial:prop}]
        We follow \cite{erdosrado}. Let $\prec$ be a well-order of $[L]^\tau$, and define $F: [L]^\tau \to 2$ by, for $A \in [L]^\tau$,
        \[F(A) = \begin{cases*}
            0 & if $A$ is $\prec$-minimal in $[A]^\tau$;\\
            1 & otherwise.
        \end{cases*}\]
        For any $A$, it is always possible to find some $A' \in [A]^\tau$ with $F(A') = 0$; simply take $A'$ to be the $\prec$-minimal element of $[A]^\tau$. Then it will certainly also be the $\prec$-minimal element of $[A']^\tau \subseteq [A]^\tau$. 
        
        We show now that in fact it is also always possible to find $A'' \in [A]^\tau$ with $F(A'') = 1$: by Lemma \ref{increasingomegaseqs:lemma}, we can find some $A_0 \subsetneq A_1 \subsetneq A_2 \subsetneq \dots$, all elements of $[A]^\tau$; then for $n < m \in \omega$, if $F(A_n) = F(A_m) = 0$, we have $A_m \prec A_n$. It follows that some $A_n$ must take colour $1$ under $F$; otherwise, $A_0 \succ A_1 \succ A_2 \succ \dots$ is a descending $\omega$-sequence in $\prec$, contradicting the assumption that $\prec$ is a well-order. It follows that no $A \in [L]^\tau$ can be homogeneous for $F$.
    \end{proof}
    We remark that this argument from \cite{erdosrado} cannot be generalised to structural infinite-exponent partition relations on arbitrary structures (in the sense of structural Ramsey theory); it fails e.g.\ for the setting of IEPRs on graphs, as it is possible to have a graph containing proper subcopies of itself but with no $\subsetneq$-chain of subcopies as in Lemma \ref{increasingomegaseqs:lemma}. It is still true, however, that the natural generalisation of Proposition \ref{choiceieprstrivial:prop} to arbitrary structures holds; see \cite{structural} for a proof.
    
    In contrast to the above results, in the absence of the Axiom of Choice it is consistent with the usual \s{ZF} axioms of set theory that non-trivial infinite-exponent partition relations can hold.  It is in light of this that our base theory in this paper is \s{ZF} without Choice.

    We make special mention here of one weak fragment of Choice which is relevant multiple times in the paper:
    \begin{defn}\label{acomegar:defn}
        The \emph{Axiom of Countable Choice for sets of reals}, \s{AC}$_\omega(\R)$, is the statement that for any family $\{X_n : n \in \omega\}$ with each $X_n \subseteq \R$, there is a choice function $\mathcal F : \omega \to \bigcup_{n \in \omega} X_n$, i.e.\ a function $\mathcal F$ with domain $\omega$ and $\mathcal F(n) \in X_n$ for each $n \in \omega$.
    \end{defn}

    % \begin{defn}\label{dc:defn}
    %     The \emph{Axiom of Dependent Choice}, written \s{DC}, is the statement that for any set $X$ and relation $R \subseteq X \times X$, if $\forall x \in X\,\exists y \in X$ with $x R y$, then there is a sequence $\langle x_n : n \in \omega\rangle$ such that for each $n \in \omega$, $x_n R x_{n+1}$.
    % \end{defn}

    Although weaker than the full Axiom of Choice, \s{AC}$_\omega(\R)$ is still independent of \s{ZF}, and we will avoid its use wherever possible throughout this paper. We remark, however, that it is consistent with IEPRs, and holds both in Solovay's model (introduced below) and in models of the Axiom of Determinacy \s{AD}, two standard settings in which IEPRs on ordinals have been studied.\footnote{Consequences of \s{AD} include the existence of (well-ordered) cardinals $\kappa$ with $\kappa \rightarrow (\kappa)^\kappa$; in particular, \s{AD} implies that $\omega_1 \rightarrow (\omega_1)^{\omega_1}$. See \cite[Theorem 28.12]{kanamori}.}%The axiom \s{AC}$_\omega(\R)$ is a strict consequence of \s{DC}, which in turn is a strict consequence of the full Axiom of Choice. While both of these axioms are weaker than full \s{AC}, they are still independent of \s{ZF}, and we will avoid their use wherever possible throughout this paper. We remark, however, that they are both consistent with IEPRs:
    \begin{defn}\label{solovay'smodel:defn}
        \emph{Solovay's model} of \s{ZF + DC} is a model of set theory in which every set of reals satisfies the standard regularity properties (Lebesgue Measurability, the Baire Property, and the Perfect Set Property).\footnote{The \textit{Axiom of Dependent Choice} \s{DC} is another fragment of Choice which strictly implies \s{AC}$_\omega(\R)$; see \cite[2.4.7]{jechaxiomofchoice}.} It was constructed in \cite{solovay} by forcing over a model of \s{ZFC} with an inaccessible cardinal with the Lévy collapse forcing and then reducing to a certain inner model inside that forcing extension.\footnote{In this paper we follow \cite{kanamori} in using the inner model $L(\R)$ here. See \S11 of that book for details.}
    \end{defn}
    By a result of Mathias in \cite{mathias}, the IEPR $\omega \rightarrow (\omega)^\omega$ holds in Solovay's model.\footnote{As such, this relation is consistent with \s{ZF} relative to an inaccessible cardinal. The question of whether this inaccessible is necessary is an open problem.} The relation $\omega \rightarrow (\omega)^\omega$ will be of key importance to our discussion of IEPRs on the real line.
    We will discuss Solovay's model and the Perfect Set Property in greater detail in \S\ref{negative:section} and \S\ref{largerhomogeneous:section}.

    Finally, we mention a certain kind of set which can exist in \s{ZF} without Choice and which gives rise to another kind of trivial IEPR.

    \begin{defn}\label{dedekind:defn}
        A set $D$ is a \emph{Dedekind set} if it is infinite but Dedekind-finite, i.e.\ $D$ is infinite but is not in bijection with any of its proper subsets; equivalently, $D$ is infinite but has no countably infinite subset.
    \end{defn}
    \begin{obs}\label{dedekindtrivial:obs}
        Let $\kappa$ be the cardinality of a Dedekind set, and $\mu$ any cardinal with $\kappa \le \mu$. Then
        \[\mu \rightarrow (\kappa)^\kappa.\]
    \end{obs}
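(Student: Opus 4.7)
The plan is to mirror the proof of Observation \ref{exacttrivial:obs}: we will exhibit, inside a set $X$ of cardinality $\mu$, a single subset $H$ whose only $\kappa$-sized subset is $H$ itself, so that $H$ is automatically homogeneous for any colouring $F : [X]^\kappa \to 2$. The relevant ``exactness'' here is at the level of cardinality rather than order type, and it is a direct consequence of Dedekind-finiteness.

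First I would fix an arbitrary $X$ with $|X| = \mu$ and a colouring $F : [X]^\kappa \to 2$. Since $\kappa \le \mu$ in the cardinal sense means the existence of an injection from a set of size $\kappa$ into $X$, pick some $Y \subseteq X$ with $|Y| = \kappa$. Because $\kappa$ is the cardinality of a Dedekind set, $Y$ itself is Dedekind-finite: it is infinite, but admits no bijection with any proper subset. Consequently, if $A \subsetneq Y$ were in $[Y]^\kappa$, then $|A| = |Y|$ would give a bijection between $Y$ and a proper subset, a contradiction. Hence $[Y]^\kappa = \{Y\}$.

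Taking $H = Y$, we get $[H]^\kappa = \{H\}$, so trivially $|F \im [H]^\kappa| = 1$, i.e.\ $H$ is homogeneous for $F$. This establishes $\mu \rightarrow (\kappa)^\kappa$. There is no real obstacle; the only substantive ingredient is the equivalence in Definition \ref{dedekind:defn} between Dedekind-finiteness and the non-existence of bijections with proper subsets, which gives the cardinality analogue of exactness and reduces the partition relation to the same one-line argument as in Observation \ref{exacttrivial:obs}.
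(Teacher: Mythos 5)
Your proof is correct and is essentially the paper's own argument: both reduce to the observation that a set of Dedekind-finite cardinality $\kappa$ has no proper subset of cardinality $\kappa$, so $[H]^\kappa = \{H\}$ and homogeneity is automatic. The only cosmetic difference is that you explicitly extract one $Y \subseteq X$ of cardinality $\kappa$ from $\kappa \le \mu$, while the paper notes that \emph{any} $H \in [X]^\kappa$ works.
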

    \begin{proof}
        Let $X$ be any set of cardinality $\mu$, and let $F : [X]^\kappa \to 2$ be a colouring; then any $H \in [X]^\kappa$ is homogeneous for $F$, as $[H]^\kappa = \{H\}$.
    \end{proof}
    This is analogous to Observation \ref{exacttrivial:obs}, and we remark that any linear order on a Dedekind set is necessarily exact. Countable Choice is sufficient to rule out the existence of such sets:\footnote{See e.g.\ \cite[2.4.2]{jechaxiomofchoice} for a general proof of this fact.}
    \begin{prop}\label{nodedekindwithcc:prop}
        Under \s{AC$_\omega(\R)$}, there are no Dedekind sets which are subsets of $\R$.
    \end{prop}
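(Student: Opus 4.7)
The plan is to show the contrapositive: every infinite $D \subseteq \R$ has a countably infinite subset, and hence is not Dedekind-finite. The strategy is the standard one — use $\s{AC}_\omega(\R)$ to pick a finite subset of $D$ of each size $n$, then take the union — but one must be careful that the family to which we apply $\s{AC}_\omega(\R)$ really is a family of \emph{sets of reals} and that the resulting union is genuinely infinite and countable without needing more choice.

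First I would fix some bijection $c : \R^{<\omega} \to \R$ (e.g.\ interleave binary expansions of finitely many reals, with a length tag) and, for each $n \in \omega \setminus \{0\}$, define
\[X_n \coloneqq \left\{c(d_1,\dots,d_n) : d_1,\dots,d_n \in D \text{ and } d_1 <_\R d_2 <_\R \dots <_\R d_n\right\} \subseteq \R.\]
Since $D$ is infinite, each $X_n$ is non-empty. By $\s{AC}_\omega(\R)$ applied to the family $\{X_n : n \ge 1\}$, there is a choice function $\mathcal F : \omega \setminus \{0\} \to \R$ with $\mathcal F(n) \in X_n$ for every $n$. Let $F_n \subseteq D$ be the unique $n$-element set whose increasing enumeration is the tuple coded by $\mathcal F(n)$.

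Now set $E \coloneqq \bigcup_{n \ge 1} F_n \subseteq D$. On the one hand $E$ is infinite, since $|F_n| = n$ and so $E$ contains subsets of every finite cardinality. On the other hand $E$ is countable: using the canonical well-ordering of finite subsets of $\R$ inherited from $<_\R$, we can enumerate $E$ by listing the elements of $F_1$ in increasing $<_\R$ order, then the elements of $F_2 \setminus F_1$, then those of $F_3 \setminus (F_1 \cup F_2)$, and so on. This enumeration is definable without further choice because the increasing enumeration of any finite subset of $\R$ is canonically given. Hence $E$ is a countably infinite subset of $D$, so $D$ is not Dedekind-finite.

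The only mildly subtle step is the third: verifying that the union of the $F_n$ is countable without invoking additional choice. This is where the linear order on $\R$ does real work — in general a countable union of finite sets need not be countable in $\s{ZF}$, but here each $F_n$ carries a canonical well-ordering inherited from $<_\R$, which is exactly enough structure to define an explicit enumeration of $\bigcup_n F_n$.
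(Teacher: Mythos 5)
Your proof is correct, but it takes a genuinely different route from the paper's. The paper exploits the separability of $\R$: fixing an enumeration $\langle O_n : n \in \omega\rangle$ of the open intervals with rational endpoints, it applies \s{AC}$_\omega(\R)$ once to the family $\{D \cap O_n : D \cap O_n \neq \emptyset\}$ and observes that the range of the resulting choice function is a countably infinite subset of $D$ (the range is infinite because any point of $D$ outside a given finite set lies in some $O_n$ missing that finite set). You instead run the generic ``choose an $n$-element subset for each $n$'' argument, made legitimate as an instance of \s{AC}$_\omega(\R)$ by coding increasing $n$-tuples from $D$ as single reals, and then use the canonical $<_\R$-enumeration of each chosen finite set to enumerate the union without further choice. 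Both steps you flag as delicate are indeed the right ones to worry about, and both go through: a \s{ZF}-definable injection $\R^{<\omega} \to \R$ exists, each $X_n$ is non-empty because an infinite set has subsets of every finite size (an induction requiring no choice), and your explicit enumeration of $\bigcup_n F_n$ avoids the usual failure of ``countable unions of finite sets are countable.'' The paper's argument is shorter and leans on the topology of the line; yours uses only that finite sequences of reals code as reals and that $\R$ is linearly ordered, so it transfers verbatim to any set $X$ with $X^{<\omega}$ injectable into $\R$, at the cost of the coding apparatus.
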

    \begin{proof}
        Let $\langle O_n : n \in \omega \rangle$ be an enumeration of the open intervals in $\R$ whose endpoints are both rational. If $A \subseteq \R$ is infinite, the range of a choice function for $\{A \cap O_n : A \cap O_n \neq \emptyset, n \in \omega\}$ is a countably infinite subset of $A$.
    \end{proof} %, writing $\{O_n : n \in \omega \}$ for the open intervals of $\R$ with rational endpoints, as in the proof of Proposition \ref{sumofinexacts:prop}.}
	\subsection{Motivation}\label{motivation:subsection}
	Much work has been done on infinite-exponent partition relations on ordinals, for instance in Kleinberg's book \cite{kleinberg}. The questions in this paper arose from considering instead what relations of the form
	\[\continuum \rightarrow (\nu)^\kappa\]
	can hold with an infinite exponent $\kappa$, in situations in which the continuum is not well-orderable.  There is no explicit reference to linear orders in this question, but it turns out to be intimately related to the order types of suborders of the real line; this in turn leads to the more granular question of which relations $\Rr \rightarrow (\sigma)^\tau$ can consistently hold in \s{ZF}.

    The results in this paper can be seen as the beginning of the project of doing for infinite exponents what the remarkable paper \cite{crtolo} did for finite exponents (although our methods are distinct).
	\section{From cardinals to order types, \emph{or} \\Positive relations with countable exponents}\label{positivecountable:section}
    As a first step, we consider the minimal relation $\continuum \rightarrow (\kappa)^\kappa$. Say that a cardinal $\kappa$ is \emph{indecomposable} if there do not exist cardinals $\mu_0$ and $\mu_1$, both strictly smaller than $\kappa$, with $\kappa = \mu_0 + \mu_1$. We remark that the cardinality of any well-orderable set is indecomposable, and so in \s{ZFC} all cardinals are indecomposable; it is not a theorem of \s{ZF} that the cardinality of the continuum is indecomposable, but this is the case in Solovay's model and in models of Determinacy.\footnote{This follows from the fact that in these models, every set of reals has the PSP (see Definition \ref{perfectset:defn}).}
    
    Our first observation is that for $\kappa$ uncountable, this additional assumption on $\kappa$ is enough to get that no partition relation on $\continuum$ with exponent $\kappa$ can hold, for reasons to do with the order types of $\kappa$-sized subsets of $\R$ in the induced suborder:
	\begin{prop}\label{indecomposable:prop}
		Let $\kappa$ be an uncountable indecomposable cardinal. Then
		\[\continuum \centernot \rightarrow (\kappa)^\kappa.\]
	\end{prop}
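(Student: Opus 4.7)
The plan is to construct a two-colouring $F : [\R]^\kappa \to 2$ with no $\kappa$-sized homogeneous set, taking $F(A) = 0$ iff $A$ has a maximum in the induced order from $\R$ and $F(A) = 1$ otherwise. The key use of indecomposability throughout is the dichotomy that, for any $A \in [\R]^\kappa$ and any $x \in \R$, the equation $|A \cap (-\infty, x]| + |A \cap (x, \infty)| = \kappa$ forces at least one summand to equal $\kappa$, since two cardinals each strictly less than $\kappa$ cannot sum to $\kappa$ by hypothesis.

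Suppose for contradiction that some $H \in [\R]^\kappa$ is $F$-homogeneous. If every $\kappa$-subset of $H$ has a maximum, I would transfinitely define $h_\lambda := \max(H \setminus \{h_\beta : \beta < \lambda\})$ for $\lambda < \omega_1$. At each stage the removed set has cardinality $|\lambda| \le \aleph_0 < \kappa$, so by indecomposability the remainder has cardinality $\kappa$, and the case hypothesis then supplies the maximum. The resulting strictly decreasing $\omega_1$-sequence has pairwise-disjoint non-empty open gaps $(h_{\lambda+1}, h_\lambda)$, and choosing a rational in each yields an order-preserving injection $\omega_1 \hookrightarrow \Q$, contradicting $|\Q| = \aleph_0$.

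The dual case is that no $\kappa$-subset of $H$ has a maximum, which forces $|H \cap (-\infty, x]| < \kappa$ for every $x \in H$ (else $H \cap (-\infty, x]$ would be such a subset, with maximum $x$), and hence $|H \cap (x, \infty)| = \kappa$ by indecomposability. I would mirror the previous construction with a strictly \emph{increasing} transfinite sequence $(h_\lambda)_{\lambda < \omega_1}$ in $H$, again yielding the impossible injection $\omega_1 \hookrightarrow \Q$ via distinct rationals in successive gaps. The main obstacle is the limit-stage analysis in this dual case: the supremum $s_\lambda := \sup_{\beta < \lambda} h_\beta$ must remain strictly below $\sup H$ throughout all of $\omega_1$, which I would control at successor stages by applying indecomposability to the bisection of $(h_\lambda, \sup H)$ and picking $h_{\lambda + 1}$ in a rational sub-interval bounded away from $\sup H$, so that the density of $\Q$ in $\R$ together with Dedekind completeness supplies the canonical choices that substitute for the countable choice unavailable in \textsf{ZF}.
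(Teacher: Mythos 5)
Your Case~1 (every $\kappa$-sized subset of $H$ has a maximum) is essentially sound and is in the spirit of the paper's argument, but your colouring --- colour $0$ iff $A$ has a \emph{maximum} --- is not the paper's, and the difference is fatal: the dual case cannot be completed, because sets homogeneous with colour $1$ for your $F$ genuinely exist. Concretely, work in a model of \textsf{ZFC} with $\aleph_\omega \le 2^{\aleph_0}$ (so $\kappa = \aleph_\omega$ is uncountable and indecomposable); fix $u \in \R$ and let $A = \bigcup_{n \in \omega} B_n$ where $B_n \subseteq \left(u - \frac{1}{n}, u - \frac{1}{n+1}\right)$ and $|B_n| = \aleph_n$. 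Every $K \in [A]^\kappa$ must meet $B_n$ for arbitrarily large $n$, since $\bigcup_{n \le m} B_n$ has cardinality $\aleph_m < \kappa$; hence $\sup K = u \notin A$ and $K$ has no maximum, so $A$ is homogeneous with colour $1$. (A correct \textsf{ZF} proof that your $F$ has no homogeneous set would in particular be a correct \textsf{ZFC} proof, so this refutes the argument, not just its ambient theory.) The same example shows why no repair of your limit-stage analysis can work: in such an $H$ there is no $\kappa$-sized subset with a maximum and no increasing $\omega_1$-sequence to be built --- the construction is forced to accumulate at $\sup H$ after countably many steps, and your bisection device does not prevent this, since halving the distance to $\sup H$ at each successor stage is exactly how one converges to it in $\omega$ steps. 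The underlying point is that indecomposability controls only \emph{binary} splittings; an indecomposable cardinal can perfectly well be a countable union of strictly smaller ones, which is precisely the configuration your dual case runs into.

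The paper avoids this by using the symmetric colouring $F(A) = 0$ iff $A$ has a minimum \emph{or} a maximum. For a colour-$1$ set $A$, splitting at any $a \in A$ gives, by indecomposability, a $\kappa$-sized piece among $A \cap (-\infty,a]$ and $A \cap [a,\infty)$, and \emph{whichever} piece is large has colour $0$, since $a$ is its maximum or its minimum respectively; your version needs specifically a large initial segment with a maximum, which the example above lacks. For the other direction the paper strips infima (and suprema) one at a time, the process terminating at a countable stage because the removed infima form a strictly increasing sequence of reals and $\omega_1$ does not embed into $\R$ --- this is the same rationals-in-gaps argument you use in Case~1, and that part of your write-up carries over unchanged.
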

	\begin{proof}
		We define a colouring $F : [\R]^\kappa \to 2$ with no homogeneous set as follows: for $A \in [\R]^\kappa$,
		\[F(A) = \begin{cases*}
			0 & if $\sup A \in A$ or $\inf A \in A$;\\
			1 & otherwise,
		\end{cases*}\]
		i.e.\ $A$ is given colour $0$ iff it has a minimal or a maximal element when considered as a suborder of the real line.
		
		To show that there is no homogeneous set for $F$, we must show that any $A \in [\R]^\kappa$ has a subset $B \in [A]^\kappa$ of the opposite colour. First suppose $F(A) = 0$. Without loss of generality $\inf A \in A$. We shall demonstrate a procedure for finding a subset $B$ of $A$ with the property that $\inf B \not \in B$. Define an ordinal-indexed sequence of sets $A_\alpha$ as follows:
		\begin{align*}
			A_0 &= A;\\
			A_{\alpha+1} &= A_\alpha \setminus \{\inf A_\alpha\}\text{ for each }\alpha;\\
			A_\gamma &= \bigcap_{\alpha < \gamma}A_\alpha \text{ for limit ordinals }\gamma.
		\end{align*}
		We observe that there must be some ${\delta} < \omega_1$ for which $A_{{\delta}+1} = A_{\delta}$, i.e.\ for which $A_{\delta}$ does not contain its infimum; otherwise, $\{\inf A_\alpha : \alpha < \omega_1\}$ would be a subset of $\R$ of order type $\omega_1$, but this is impossible as no such set exists. %$\omega_1 \,\,\slashed \le\,\, \lambda$.
        It follows that at some stage ${\delta} < \omega_1$ this process terminates; $A_{\delta}$ still has cardinality $\kappa$, since at most countably many elements of $A$ have been removed by this point and $\kappa$ is uncountable and indecomposable. The analogous procedure with ``sup" replacing ``inf" clearly also produces a $\kappa$-sized subset of $A$ not containing its supremum without adding or removing an infimum. By applying one or both of these procedures to some $A \in [\R]^\kappa$ with $F(A) = 0$, we obtain $B \in [A]^\kappa$ with $F(B) = 1$.
		
		Now suppose $F(A) = 1$. Pick any $a \in A$. Since $\kappa$ is indecomposable, it is necessarily the case that at least one of $B_1 \coloneqq(-\infty, a]\cap A$, $B_2 \coloneqq [a,\infty)\cap A$ has cardinality $\kappa$. Observing that $a = \sup B_1 = \inf B_2$ and $a \in B_1$, $a \in B_2$, it follows that (at least) one of these $B_i$ is an element of $[A]^\kappa$ with $F(B_i) = 0$.
	\end{proof}
	%\section{Positive relations with countable exponents}\label{positivecountable:section}
	By Proposition \ref{indecomposable:prop} we have that, at least for indecomposable uncountable cardinals $\kappa$, the relation $\continuum \rightarrow (\kappa)^\kappa$ cannot hold, for order-theoretic reasons. What, then, if instead $\kappa = \aleph_0$? This turns out to be more interesting. Once again, order types of subsets of $\R$ are relevant: the following observation immediately tells us that we need only consider partition relations for linear orders.
	\begin{prop}
		$2^{\aleph_0} \rightarrow (\aleph_0)^{\aleph_0} \iff \Rr \rightarrow (\omega)^\omega$.
	\end{prop}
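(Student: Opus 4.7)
The proposition is an equivalence, to be proved by handling each direction separately. For $\Rr \rightarrow (\omega)^\omega \implies 2^{\aleph_0} \rightarrow (\aleph_0)^{\aleph_0}$, the plan exploits $|\R| = 2^{\aleph_0}$: given a colouring $F : [X]^{\aleph_0} \to 2$ with $|X| = 2^{\aleph_0}$, I fix any bijection $b : \R \to X$ and transport $F$ to $F' : [\R]^\omega \to 2$ by $F'(A) := F(b\im A)$. A homogeneous $H_0 \in [\R]^\omega$ for $F'$ then yields $b\im H_0 \in [X]^{\aleph_0}$ homogeneous for $F$; the key point is that every countably infinite subset of the $\omega$-ordered set $H_0$ is itself of order type $\omega$, so pulls back via $b^{-1}$ to an element of $[H_0]^\omega$.

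For the converse, given $F : [\R]^\omega \to 2$, my plan is to extend it to $\tilde F : [\R]^{\aleph_0} \to 2$ via
\[\tilde F(A) = \begin{cases} F(A) & \text{if $A$ has order type $\omega$,} \\ F(-A) & \text{if $A$ has order type $\omega^*$,} \\ 0 & \text{otherwise,} \end{cases}\]
where $-A := \{-r : r \in A\}$ exploits the order-reversing self-symmetry of $\R$ (so that $-A$ has order type $\omega$ whenever $A$ has order type $\omega^*$). Applying the hypothesis with $X = \R$ gives some $H \in [\R]^{\aleph_0}$ monochromatic for $\tilde F$ with value $c$. By Ramsey's theorem $\omega \rightarrow (\omega)^2_2$ (provable in \s{ZF}), every countably infinite subset of $\R$ admits a subset of order type $\omega$ or $\omega^*$, and in particular this holds for $H$. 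If $c = 1$, the ``otherwise'' clause is forbidden, forcing every $A \in [H]^{\aleph_0}$---in particular $H$ itself---to have order type $\omega$ or $\omega^*$; either way, $H$ or $-H$ is an $\omega$-ordered homogeneous set for $F$. If $c = 0$, I extract an $\omega$- or $\omega^*$-subset $H_0$ of $H$, which is then homogeneous for $F$ either directly (if $\omega$-ordered) or via $-H_0$ (if $\omega^*$-ordered), using the corresponding clause of $\tilde F$.

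The main obstacle lies in the forward direction: the monochromatic set $H$ furnished by the cardinal partition relation can have any countable order type as a suborder of $\R$---in particular, it may be reverse well-ordered and so contain no $\omega$-subset at all. The symmetric clause $\tilde F(A) = F(-A)$ for $\omega^*$-ordered $A$ is precisely what lets me recover homogeneity for $F$ by passing to the negated image $-A$, which has the required order type $\omega$.
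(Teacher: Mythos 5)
Your proof is correct and follows essentially the same route as the paper's: one direction restricts (equivalently, transports along a bijection) the colouring to $[\R]^\omega$ and uses that $[H]^{\aleph_0} = [H]^\omega$ for $\omega$-ordered $H$, while the other extends a colouring of $[\R]^\omega$ to $[\R]^{\aleph_0}$ via the reflection $A \mapsto -A$ on $\omega^*$-ordered sets and then extracts an $\omega$- or $\omega^*$-ordered subset of the homogeneous set. The only cosmetic difference is your case split on the homogeneous colour $c$; the paper performs the extraction uniformly, which makes that split unnecessary.
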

    We remind the reader that the relation on the left refers to colourings of countably infinite subsets of $\R$, while the relation on the right refers to colourings of subsets of $\R$ which are ordered as $\omega$ in the induced suborder.
	\begin{proof}
		$(\!\!\!\impliedby\!\!\!)\,$: Let $F: [\R]^{\aleph_0} \to 2$ be any colouring, and let $F':[\R]^\omega \to 2$ be its restriction to $[\R]^\omega \subsetneq [\R]^{\aleph_0}$. By assumption $\Rr \rightarrow (\omega)^\omega$, so there is some $H \in [\R]^\omega$ which is homogeneous for $F'$; but then $H$ is also homogeneous for $F$, since $[H]^{\aleph_0} = [H]^\omega$.%since $[H]^{\aleph_0} = [\langle H,<_H\rangle]^\omega$.
		
		$(\!\!\implies\!\!)\,$: Let $G:[\R]^\omega \to 2$. We define a colouring $G': [\R]^{\aleph_0}\to 2$ like so:
		\[G'(A) = \begin{cases*}
			G(A) & if $A \in [\R]^\omega$;\\
			G(-A) & if $A \in [\R]^{\omega^*}$;\\
			0 & otherwise,
		\end{cases*}\]
		where here by $-A$ we mean the set $\{-a : a \in A\}$.
		
		By assumption there is some $K \in [\R]^{\aleph_0}$ which is homogeneous for $G'$. Since $K$ is countable, at least one of $[K]^\omega$ or $[K]^{\omega^*}$ is non-empty; if $K' \in [K]^\omega$ then $K'$ is homogeneous for $G$, and if $K' \in [K]^{\omega^*}$ then $-K'$ is homogeneous for $G$.
		%By assumption there is some $K \in [\R]^{\aleph_0}$ which is homogeneous for $G'$. Since $K$ is countable, at least one of $[\langle K,<_K\rangle]^\omega$ or $[\langle K,<_K\rangle]^{\omega^*}$ is non-empty; if $K' \in [\langle K,<_K\rangle]^\omega$ then $K'$ is homogeneous for $G$, and if $K' \in [\langle K,<_K\rangle]^{\omega^*}$ then $-K'$ is homogeneous for $G$.
	\end{proof}
    \begin{obs}\label{nouncountablehomog:obs}
        For $\kappa > \aleph_0$,
        \[\continuum \centernot \rightarrow (\kappa)^{\aleph_0}.\]
    \end{obs}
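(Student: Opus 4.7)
The plan is to exhibit a single colouring $F:[\R]^{\aleph_0}\to 2$ that admits no homogeneous set of cardinality greater than $\aleph_0$. A natural candidate is
\[F(A) = \begin{cases} 0 & \text{if $A$ has a least element},\\ 1 & \text{otherwise.} \end{cases}\]
Given $H\subseteq\R$ with $|H|=\kappa > \aleph_0$, the task reduces to exhibiting a countable subset $A_0\subseteq H$ with a minimum and a countable subset $A_1\subseteq H$ without one.

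The central fact I would invoke is the standard \s{ZF} result that neither $\omega_1$ nor $\omega_1^*$ embeds order-preservingly into $\R$: given a would-be strictly monotone $\omega_1$-sequence in $\R$, one selects in each of the $\omega_1$ pairwise disjoint nonempty open intervals between consecutive terms the first-enumerated rational under a fixed enumeration of $\Q$, producing an injection $\omega_1\to\Q$ and hence a contradiction. Applied to $H$, this forces $(H,<)$ to be neither well-ordered nor reverse-well-ordered: were $(H,<)$ well-ordered, its order type would be an ordinal of cardinality greater than $\aleph_0$, hence at least $\omega_1$, contradicting the above, and the other case is dual. From the failure of well-order I would extract a strictly decreasing $\omega$-sequence inside $H$, yielding a countable subset of colour $1$, and from the failure of reverse-well-order a strictly increasing $\omega$-sequence, yielding a countable subset of colour $0$. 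So $H$ is not $F$-homogeneous.

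The main obstacle is the final extraction step: in bare \s{ZF} the passage from ``$(H,<)$ is not (reverse) well-ordered'' to an explicit monotone $\omega$-sequence in $H$ requires a fragment of choice, for which \s{DC} is convenient (one applies it to the relation ``$<$'' restricted to a subset of $H$ with no minimum). Without any such principle the statement admits trivial counterexamples --- if $D\subseteq(0,1)$ is an uncountable Dedekind set of reals, then every countable subset of $H:=\mathbb N\cup D$ has order type $\omega$ and a minimum, so the $F$ above is constant on $[H]^{\aleph_0}$ --- but in all the models of interest in this paper, notably Solovay's model and models of \s{AD}, \s{DC} holds and the argument goes through uniformly.
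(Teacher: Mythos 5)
Your colouring (``does $A$ have a least element?'') is a close variant of the paper's own (``is $A$ ordered as $\omega$ or $\omega^*$?''), and the core of your argument --- that a suborder of $\R$ of cardinality greater than $\aleph_0$ can be neither well-ordered nor reverse-well-ordered, because neither $\omega_1$ nor $\omega_1^*$ embeds into $\R$ --- is sound and choice-free. Under \s{AC}$_\omega(\R)$ (a fortiori under \s{DC}) the extraction of a countably infinite subset of $H$ without a least element, and of one with a least element, goes through exactly as you describe; indeed for subsets of $\R$ the rational-interval trick you already invoke suffices, so \s{DC} is more than you need. With that hypothesis in place your proof is complete and runs parallel to the paper's.

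Two corrections to your final paragraph are in order. First, $H = \mathbb N \cup D$ is not a counterexample to the \emph{statement}: the relation $\continuum \rightarrow (\kappa)^{\aleph_0}$ is universally quantified over colourings, so its negation asserts only that \emph{some} colouring has no homogeneous set of cardinality $\kappa$; exhibiting a homogeneous set for your particular $F$ shows only that this $F$ fails to witness the negation. (Note in particular that if $\omega \centernot \rightarrow (\omega)^\omega$, the Observation holds for every $\kappa \ge \aleph_0$ regardless of Dedekind sets, since then there is a colouring of $[\R]^{\aleph_0}$ with no countably infinite homogeneous set, and any homogeneous set of cardinality $\ge \aleph_0$ would contain one.) Second --- and this is the genuinely valuable content of your remark --- the very same set $\mathbb N \cup D$ is homogeneous for the \emph{paper's} colouring as well: every countably infinite subset of $\mathbb N \cup D$ meets the Dedekind-finite part in a finite set and is therefore ordered as $\omega$, so the assertion that the paper's $F$ ``can only have homogeneous sets ordered as one of $\omega$ or $\omega^*$'' fails in bare \s{ZF} when an infinite Dedekind-finite set of reals exists. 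Your observation therefore does not sink your proof any more than it sinks the paper's; rather, it locates a hypothesis (something like \s{AC}$_\omega(\R)$, cf.\ Proposition \ref{nodedekindwithcc:prop}, or at least the nonexistence of Dedekind sets of reals) that both arguments implicitly require in the case where $\omega \rightarrow (\omega)^\omega$ holds. You should state that hypothesis rather than withdraw the claim.
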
%We remark that $\continuum \rightarrow (\kappa)^{\aleph_0}$ can never hold for any $\kappa > \aleph_0$, as for instance $F: [\R]^{\aleph_0} \to 2$ given by, for $A \in [\R]^{\aleph_0}$,
    \begin{proof}
        Define the colouring $F: [\R]^{\aleph_0} \to 2$ by, for $A \in [\R]^{\aleph_0}$,
        \[F(A) = \begin{cases*}
            0 & if $A$ is ordered as one of $\omega$ or $\omega^*$;\\
            1 & otherwise.
        \end{cases*}\]
        Then this $F$ can only have homogeneous sets ordered as one of $\omega$ or $\omega^*$, so in particular any set homogeneous for it must be countable.
    \end{proof}
    
	Now we come to our first main result. For this result and throughout the rest of the paper we use the following notation: $\langle q_n : n \in \omega \rangle$ is a fixed enumeration of the rationals, and for $x, y \in \R$ with $x < y$, write
    \[N(x,y) \coloneqq \min\{n \in \omega: q_n \in [x,y)\}.\]
    If $x > y$, interpret $N(x,y)$ to mean $N(y,x)$. Note that we are considering intervals which are closed below and open above. As such we have the following important property of $N(x,y)$: for any real numbers $x_0 < x_1 < \dots < x_k$,
	\[N(x_0,x_k) = \min\{N(x_i,x_{i+1}) : 0 \le i \le k-1\}.\]
	%\newpage
	\begin{lemma}\label{omega:lemma}
		$\Rr \rightarrow (\omega)^\omega \iff \omega\rightarrow(\omega)^\omega$.
	\end{lemma}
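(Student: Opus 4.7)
The equivalence has two directions. The backward direction, $\omega \rightarrow (\omega)^\omega \Rightarrow \Rr \rightarrow (\omega)^\omega$, is essentially immediate once one notes that $\mathbb N$ sits inside $\R$ as an $\omega$-typed subset: restricting any colouring $G \colon [\R]^\omega \to 2$ to $[\omega]^\omega$ and applying the hypothesis produces $H \in [\omega]^\omega$ homogeneous for the restriction, and this same $H$ is then homogeneous for $G$ because $[H]^\omega$ is the same whether $H$ is regarded as a subset of $\omega$ or of $\R$.

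For the forward direction $\Rr \rightarrow (\omega)^\omega \Rightarrow \omega \rightarrow (\omega)^\omega$, the plan is to use $N$ to define a canonical map $\iota \colon [\R]^\omega \to [\omega]^\omega$ such that lifting $F$ to $F'(A) \coloneqq F(\iota(A))$ allows a homogeneous set for $F'$ in $\R$ to be transferred to a homogeneous set for $F$ in $\omega$. Concretely, for $A = \{a_0 < a_1 < \dots\}$, I set $g_i^A \coloneqq N(a_i, a_{i+1})$ and observe that because the intervals $[a_i, a_{i+1})$ are pairwise disjoint, the values $g_i^A$ are pairwise distinct. Then I define $\mu_A(m) \coloneqq \min\{g_i^A : i \ge m\}$, a weakly increasing function $\mathbb N \to \mathbb N$ which is unbounded by the injectivity of $\langle g_i^A\rangle$, and set $\iota(A) \coloneqq \{\mu_A(m) : m \in \mathbb N\}$, an infinite subset of $\omega$. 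All of this uses only minima of subsets of $\mathbb N$, so no form of Choice is needed.

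The two properties of $\iota$ that make the transfer work, and which constitute the main technical content of the proof, are: (a) $\iota(B) \subseteq \iota(A)$ whenever $B \in [A]^\omega$; and (b) every $K \in [\iota(A)]^\omega$ arises as $\iota(B)$ for some $B \in [A]^\omega$. Both follow from the identity $N(x_0, x_k) = \min\{N(x_i, x_{i+1}) : 0 \le i < k\}$ noted just before this lemma. Property (a) comes from computing $\mu_B(k) = \mu_A(j_k)$ where $B = \{a_{j_k} : k \in \omega\}$. For property (b), enumerate $\iota(A) = \{g_{i_j}^A : j \in \omega\}$ via the ``successive-minimum positions'' $i_0 < i_1 < \dots$ of $\langle g_i^A\rangle$ (so that $g_{i_j}^A = \min\{g_i^A : i > i_{j-1}\}$ and $\langle g_{i_j}^A\rangle_j$ is strictly increasing), and then, for $K = \{g_{i_{l_k}}^A : k \in \omega\}$, take $B \coloneqq \{a_{i_{l_k}} : k \in \omega\}$; the minimality property forces the gap sequence of $B$ to coincide with $\langle g_{i_{l_k}}^A\rangle_k$, whence $\iota(B) = K$.

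Once (a) and (b) are in place, applying $\Rr \rightarrow (\omega)^\omega$ to $F'$ produces $H \in [\R]^\omega$ homogeneous with constant value $c$; then for any $K \in [\iota(H)]^\omega$, property (b) supplies $B \in [H]^\omega$ with $\iota(B) = K$, giving $F(K) = F'(B) = c$, so $\iota(H) \in [\omega]^\omega$ is the desired homogeneous set for $F$. The main obstacle is the verification of (a) and (b), which requires careful bookkeeping with the minimum-over-concatenation identity for $N$ and the canonical enumeration of $\iota(A)$; once these are in hand, the remainder of the argument is formal.
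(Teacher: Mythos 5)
Your proof is correct and follows essentially the same route as the paper's: your map $\iota$ (successive minima of the gap sequence $N(a_i,a_{i+1})$) is exactly the paper's passage to a canonical $N$-increasing subsequence followed by taking its $N$-sequence, and your property (b) is the paper's key surjectivity claim that every infinite subset of $N(\bar{x})$ is realised as $N(\bar{z})$ for some $\bar{z} \in [\bar{x}]^\omega$. The only difference is presentational: you run the transfer directly (pull back the colouring, push forward the homogeneous set) where the paper argues by contrapositive, building a bad colouring of $[\R]^\omega$ from a bad colouring of $[\omega]^\omega$.
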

	\begin{proof}
		Note first that the reverse implication is immediate by the monotonicity of the left-hand side of the partition relation (i.e.\ we may fix a copy of $\omega$ in the real line and restrict our attention to it), so we focus on the forward implication. Suppose $\omega \centernot \rightarrow(\omega)^\omega$, i.e.\ there is some $F : [\omega]^\omega \to 2$ such that there is no homogeneous set for $F$. We use this to build a colouring $F' :[\R]^\omega \to 2$ with no homogeneous set.
		
		For $\bar{x} = \langle x_k : k \in \omega\rangle \in [\R]^\omega$, say $\bar{x}$ is \emph{$N$-increasing} if for every $i \in \omega$, $N(x_i,x_{i+1}) < N(x_{i+1},x_{i+2})$. If $\bar{x}$ is $N$-increasing, we can treat \[N(\bar{x}) \coloneqq \langle N(x_k,x_{k+1}) : k \in \omega\rangle\] as an element of $[\omega]^\omega$. We now define our colouring. For $\bar{x} \in [\R]^\omega$, set
		\[F'(\bar{x}) = \begin{cases*}
			F(N(\bar{x})) & if $\bar{x}$ is $N$-increasing;\\
			0 & otherwise.
            \end{cases*}\]
        \begin{claim}\label{nincreasingdense:claim}
        Every $\bar{x} \in [\R]^\omega$ has an infinite subsequence which is $N$-increasing.
	\end{claim}
        \begin{claimproof}
        Consider the subsequence $\bar{y}$ of $\bar{x}$ defined like so: set $y_0$ equal to that $x_k$ which minimises $N(x_k,x_{k+1})$, and for $n > 0$, set $y_n$ equal to the value of $x_k$ which minimises $N(x_k,x_{k+1})$ among those $x_k$ with $x_k > y_{n-1}$. Then for any $n \in \omega$, $N(y_n,y_{n+1}) = N(x_{k_0},x_{k_1})$ for some $k_0 < k_1$, but $N(x_{k_0},x_{k_1}) = \min\{N(x_i,x_{i+1}) : k_0 \le i < k_1\} = N(x_{k_0},x_{k_0+1})$ by choice of $k_0$. It follows that $N(y_n,y_{n+1}) < N(y_{n+1},y_{n+2})$ for all $n \in \omega$.
	\end{claimproof}%\hfill $\blacksquare_\text{Claim 1}$
		
        \begin{claim}\label{nincreasingbij:claim} Given an $N$-increasing $\bar{x}$, for any $A \in [N(\bar{x})]^\omega$ there is some $\bar{z}_A \in [\bar{x}]^\omega$ with $N(\bar{z}_A) = A$.\end{claim}
		
	\begin{claimproof}
    	Consider the subsequence $\bar{z}_A$ of $\bar{x}$ whose members are precisely those $x_k$ such that $N(x_k,x_{k+1}) \in A$. Then if $x_{k_0},x_{k_1}$ are consecutive members of $\bar{z}_A$ for some $k_0 < k_1$, we have $N(x_{k_0},x_{k_1}) = \min\{N(x_i,x_{i+1}) : k_0 \le i < k_1\} = N(x_{k_0},x_{k_0+1})$, as $\bar{x}$ was assumed to be $N$-increasing. It follows that $N(\bar{z}_A) = A$.
	\end{claimproof}
		Now assume we have some $\bar{x} \in [\R]^\omega$ which is homogeneous for $F'$. By Claim \ref{nincreasingdense:claim}, there is some $N$-increasing $\bar{y} \in [\bar{x}]^\omega$. Any subset of a homogeneous set is necessarily also homogeneous, so $\bar{y}$ is also homogeneous for $F'$. By assumption, there is no homogeneous set in $[\omega]^\omega$ for $F$, so in particular $N(\bar{y})$ is not homogeneous for $F$; there therefore exists some $A \in [N(\bar{y})]^\omega$ with $F(A) \neq F(N(\bar{y}))$. But now by Claim \ref{nincreasingbij:claim} there is some $\bar{z}\in [\bar{y}]^\omega$ with $N(\bar{z}) = A$, so we have
		\[F'(\bar{y}) = F(N(\bar{y})) \neq F(A) = F'(\bar{z}),\]
		and so $\bar{y}$ is not homogeneous for $F'$, a contradiction.
	\end{proof}
    \begin{cor}\label{omegaplusk:cor}
        For any $k \in \omega$, \[\Rr \rightarrow (\omega + k)^{\omega + k} \iff \omega \rightarrow (\omega)^\omega\] and \[\Rr \rightarrow (\omega + k)^{\omega + k} \iff \omega \rightarrow (\omega)^\omega\]
    \end{cor}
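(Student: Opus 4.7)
The plan is to reduce both biconditionals to $\Rr \rightarrow (\omega)^\omega$ via Lemma \ref{omega:lemma}, and then to transfer between exponent $\omega$ and exponent $\omega + k$ using a simple structural fact. (I read the displayed second biconditional as being about $k + \omega^*$, as promised in Theorem \ref{mainthm}.) First I would handle the $k + \omega^*$ version by applying the order-reversing bijection $x \mapsto -x$ of $\R$, which sends copies of $\omega + k$ to copies of $k + \omega^*$ and gives $\Rr \rightarrow (\omega + k)^{\omega + k} \iff \Rr \rightarrow (k + \omega^*)^{k + \omega^*}$, reducing everything to the $\omega + k$ case.

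The main ingredient is a structural observation: if $H \subseteq \R$ has order type $\omega + k$, with initial $\omega$-part $H^-$ and top $k$ elements forming the set $H^\top$, then every $B \in [H]^{\omega + k}$ is of the form $H^\top \cup A$ where $A$ is an arbitrary infinite subset of $H^-$. This holds because the top $k$ elements of $B$ must each lie above infinitely many elements of $H$, but the only elements of $H$ with that property are the $k$ elements of $H^\top$. Given this, both directions of $\Rr \rightarrow (\omega + k)^{\omega + k} \iff \Rr \rightarrow (\omega)^\omega$ become one-line colouring constructions. For $(\!\!\impliedby\!\!)$, I would fix any $p_1 < \dots < p_k \in \R$ and, given $F : [\R]^{\omega + k} \to 2$, apply $\Rr \rightarrow (\omega)^\omega$ to the colouring $A \mapsto F(A \cup \{p_1, \dots, p_k\})$ on $[(-\infty, p_1)]^\omega$; any homogeneous $H'$ of type $\omega$ yields, by the structural fact, a homogeneous $H' \cup \{p_1, \dots, p_k\}$ of type $\omega + k$ for $F$. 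For $(\!\!\implies\!\!)$, given $F : [\R]^\omega \to 2$, I would colour $[\R]^{\omega + k}$ by $C \mapsto F(C \setminus C^\top)$; any homogeneous $H$ of type $\omega + k$ gives $H^-$ homogeneous for $F$, again by the structural fact.

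The only step requiring any thought is the structural observation, which is really just an exercise in unpacking the definition of the order type $\omega + k$; I do not anticipate a serious obstacle.
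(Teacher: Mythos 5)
Your proof is correct and takes essentially the same route as the paper, which likewise reduces the $k+\omega^*$ case to the $\omega+k$ case by reversal and then deduces both equivalences from Lemma \ref{omega:lemma}; the paper simply states this as immediate and leaves your ``top $k$ elements'' structural observation implicit. (You are also right that the second displayed biconditional in the statement is a typo for the $k+\omega^*$ version.)
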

    \begin{proof}
        This follows immediately from Lemma \ref{omega:lemma} by observing that for any $k \in \omega$, $\Rr \rightarrow (\omega + k)^{\omega+k} \iff \Rr \rightarrow (k + \omega^*)^{k + \omega^*} \iff \Rr \rightarrow (\omega)^\omega$.
    \end{proof}
	With Lemma \ref{omega:lemma} and Corollary \ref{omegaplusk:cor} we have shown one direction of Theorem~\ref{mainthm}, namely that if $\tau$ is of the form $\omega+k$ or $k+\omega^*$ for $k$ a natural number, then $\Rr \rightarrow(\tau)^\tau$ holds if and only if $\omega \rightarrow (\omega)^\omega$. To complete our proof of Theo\-rem~\ref{mainthm}, it therefore remains to show that $\Rr \centernot \rightarrow (\tau)^\tau$ for all other countably infinite order types $\tau$. We will show this by means of a much more general result which will also allow us to deduce Theorem \ref{solovay:theorem} as a corollary.
	\section{Negative relations}\label{negative:section}
	We begin this section with two classical results of Dushnik and Miller, phrased in terms of exactness (Definition \ref{exact:defn} in \S\ref{choice:subsection}):%by recalling the definition of an exact linear order/order type/set of reals (Definition \ref{exact:defn} in \S\ref{choice:subsection}). Two classical results of Dushnik and Miller can be phrased in this language as follows:
	\begin{thm}[Dushnik \& Miller; {\cite[Theorem 1]{dushnikmiller}}]\label{dushnikmillercountable:thm} 
	    Every countably infinite linear order is inexact.
	\end{thm}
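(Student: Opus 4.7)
The goal is to construct, given a countably infinite linear order $\langle L,<\rangle$, a non-surjective order-preserving injection $f : L \to L$; such an $f$ exhibits $L$ as order-isomorphic to its proper subset $f \im L$, showing $L$ is inexact. My plan is to split into cases via a condensation: define $a \sim b$ iff the closed interval of $L$ between $a$ and $b$ is finite. Each $\sim$-class is convex in $L$, and a direct argument on the sizes of its initial segments shows that its induced order type must be finite, $\omega$, $\omega^*$, or $\mathbb{Z}$.

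If some $\sim$-class $C$ is infinite, then $C$ has order type $\omega$, $\omega^*$, or $\mathbb{Z}$, each of which admits a canonical shift self-embedding $\phi : C \to C$ whose image omits a unique element. I would extend $\phi$ by the identity on $L \setminus C$; convexity of $C$ in $L$ ensures that no element of $L \setminus C$ lies strictly between two elements of $C$, so the extension is a well-defined order-preserving injection, and the point omitted by $\phi$ remains omitted by the extension. If instead every $\sim$-class is finite, then the quotient $Q = L/{\sim}$ is countably infinite, and moreover $Q$ is densely ordered: if two distinct classes were adjacent in $Q$, their union would be a finite convex subset of $L$, forcing them to coincide as a single $\sim$-class. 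I would then build $f$ by a back-and-forth construction on the enumeration of $L$, fixing a distinguished $b \in L$ in advance to be omitted, and at each stage choosing $f(a_n)$ inside the forced open interval of $L$ determined by the previous commitments while additionally placing successive images into fresh $\sim$-classes.

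The main obstacle is this second case. The forced interval into which $f(a_n)$ must fall could in principle collapse inside a single finite $\sim$-class, where the previously used images together with $b$ could exhaust the available targets. The key resource overcoming this is that every infinite convex subset of $L$ meets infinitely many distinct $\sim$-classes, since every finite convex subset of $L$ is contained in a single class by density of $Q$; by consistently placing consecutive images into distinct classes, the forced intervals at every subsequent stage remain infinite and leave ample room to avoid both $b$ and the finitely many prior images.
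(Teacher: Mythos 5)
The paper itself offers no proof of this theorem --- it is quoted from Dushnik and Miller with a citation --- so your argument can only be assessed on its own terms. Your strategy (condense by the relation $a \sim b$ iff $[a,b]$ is finite, note each class is convex of type finite, $\omega$, $\omega^*$ or $\zeta$, and split on whether some class is infinite) is sound, and Case 1 is essentially complete: the extension of the class's self-embedding by the identity is order-preserving precisely because the class is convex, and the omitted point stays omitted. One quibble there: for a class of type $\zeta = \omega^* + \omega$ the ``canonical shift'' $n \mapsto n+1$ is a bijection and omits nothing; you need instead, say, the map fixing the negative half and shifting the non-negative half, which omits $0$. The fact you need is true, but not via the map you name.

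The genuine gap is in Case 2, at the ends of the order. Your invariant ``consecutive images lie in distinct $\sim$-classes'' guarantees that the forced interval $(f(p),f(q))$ between two adjacent previously placed images is infinite, but it says nothing about the two unbounded forced regions: $\{y : y > f(p)\}$ when $a_n$ exceeds every previous $a_i$ (with $p$ their maximum), and dually below the minimum. These can be finite or empty regardless of how the classes of the images are distributed. Concretely, take $L = \mathbb{Q} \cap [0,1]$, where every $\sim$-class is a singleton and your class-freshness condition is vacuous: if the construction ever sets some $f(a_i) = 1$ with $a_i \neq \max L$, then at the first later stage where $a_n$ exceeds all previous domain elements the forced region is empty and the recursion dies. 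Nothing you have stated forbids this. The repair is routine but has to be said: strengthen the invariant so that for \emph{every} cut of $\{a_0,\dots,a_n\}$ realised by some element of $L$ outside the domain --- including the two end cuts --- the corresponding interval of images meets infinitely many $\sim$-classes. This is maintainable, since any convex subset of $L$ meeting infinitely many classes contains (by density of the quotient) a point with infinitely many classes on whichever sides need to stay infinite, and it gives the room you need at every future stage while still dodging $b$.
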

    \begin{thm}[Dushnik \& Miller; {\cite[Theorem 3]{dushnikmiller}}]\label{dushnikmilleruncountable:thm}
        \s{(ZFC)} There is an exact set of reals which has cardinality $2^{\aleph_0}$.
    \end{thm}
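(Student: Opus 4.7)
The plan is a transfinite construction of length $\continuum$ in which we enumerate all potential non-identity order-preserving self-maps and kill each one by arranging a specific witness to failure.

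The preparatory counting lemma is: there are at most $\continuum$ strictly increasing functions $g : \R \to \R$. This uses the classical fact that a monotone function has at most countably many discontinuities, so $g$ is determined by its restriction to $\Q$ together with its values at its jump points, i.e.\ by countably much data in $\R$. A complementary observation is that if $g : \R \to \R$ is strictly increasing and not the identity, then its non-fixed set has cardinality $\continuum$, for otherwise the fixed set would be dense in $\R$ and the order-sandwich argument would force $g(x) = x$ for every $x$; the fixed set therefore omits an open interval, which contains $\continuum$-many non-fixed points.

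Enumerate the non-identity strictly increasing functions $\R \to \R$ as $\langle g_\alpha : \alpha < \continuum \rangle$ and build disjoint sets $S, E \subseteq \R$ by recursion. At stage $\alpha$ let $S_{<\alpha} = \{s_\beta : \beta < \alpha\}$ and $E_{<\alpha} = \{e_\beta : \beta < \alpha\}$, and pick
\[s_\alpha \in \R \setminus \bigl(S_{<\alpha} \cup E_{<\alpha} \cup \{x : g_\alpha(x) = x\} \cup g_\alpha^{-1}(S_{<\alpha})\bigr).\]
The first, second, and fourth forbidden sets have size $< \continuum$ (the fourth using injectivity of $g_\alpha$), while the third has complement of size $\continuum$ in $\R$ by the lemma above, so such an $s_\alpha$ exists. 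Set $e_\alpha \coloneqq g_\alpha(s_\alpha)$; then $e_\alpha \neq s_\alpha$ (by avoiding the fixed set) and $e_\alpha \notin S_{<\alpha}$ (by avoiding the preimage), keeping $S \cap E = \emptyset$ throughout the construction. Finally let $S = \{s_\alpha : \alpha < \continuum\}$, which has cardinality $\continuum$.

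To verify $S$ is exact, suppose $\phi : S \to S$ is order-preserving and not the identity. A standard interpolation argument shows $\phi$ extends to a strictly increasing $\tilde\phi : \R \to \R$: define $\tilde\phi$ on each maximal gap $(a,b)$ of $S$ by linear interpolation between the appropriate $\phi$-limits, and at two-sided accumulation points $x$ of $S$ lying outside $S$, $\tilde\phi(x)$ is forced to equal the common one-sided limit of $\phi$, with strictness preserved because the relevant suprema are not attained (if they were, $x$ would fail to be a two-sided accumulation point). Then $\tilde\phi = g_\alpha$ for some $\alpha < \continuum$, and so $\phi(s_\alpha) = g_\alpha(s_\alpha) = e_\alpha \in E$, contradicting $\phi(s_\alpha) \in S$. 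The main obstacle is the extension step, which requires a careful case analysis of how $S$ sits inside $\R$; working instead with weakly increasing extensions would bypass that difficulty but then demand a separate injectivity argument to bound $g_\alpha^{-1}(S_{<\alpha})$ in the recursion.
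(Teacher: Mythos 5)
The paper does not prove this theorem --- it is quoted from Dushnik and Miller --- so I am judging your argument on its own terms. Your overall strategy (count the strictly increasing self-maps of $\R$, then diagonalise against them in a transfinite recursion of length $\continuum$, reserving the images $e_\alpha = g_\alpha(s_\alpha)$ as forbidden points) is the right one and is essentially the classical Dushnik--Miller argument; the counting lemma, the cardinality bookkeeping in the recursion, and the observation that a non-identity increasing map has $\continuum$-many non-fixed points are all correct.

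There is, however, a genuine gap at exactly the step you flag as ``the main obstacle'': it is \emph{not} true that every non-identity order-preserving $\phi : S \to S$ extends to a strictly increasing $\tilde\phi : \R \to \R$ for an arbitrary $S \subseteq \R$. Take $S = (0,1) \cup (2,3)$ and let $\phi$ map $(0,1)$ onto $(0,\tfrac12)$ and $(2,3)$ onto $(\tfrac12,1)$ by affine maps; this is order-preserving from $S$ into $S$, but $\sup\{\phi(s) : s \in S,\ s < 1\} = \tfrac12 = \inf\{\phi(s) : s \in S,\ s > 2\}$, so any increasing extension would have to be constant on $[1,2]$. Your interpolation recipe fails here precisely because ``linear interpolation between the appropriate $\phi$-limits'' on a maximal gap presupposes those limits are distinct, and your parenthetical about unattained suprema only addresses two-sided accumulation points, not gaps whose image collapses to a single cut. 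Since your recursion places the $s_\alpha$ anywhere in a co-small set, nothing guarantees the resulting $S$ avoids this pathology, and you cannot invoke exactness of $S$ to rule out such a $\phi$ without circularity. The standard repair is to force $S$ to be dense in $\R$: interleave into the recursion stages at which $s_\alpha$ is required to lie in a prescribed rational interval (each such interval has $\continuum$ points, so the forbidden sets can still be avoided, and these extra points must also avoid $E_{<\alpha}$ so as not to resurrect any $e_\beta$). Once $S$ is dense, $\tilde\phi(x) \coloneqq \sup\{\phi(s) : s \in S,\ s \le x\}$ is a strictly increasing extension of any order-preserving $\phi : S \to S$, and the rest of your argument goes through unchanged.
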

	Dushnik and Miller's construction of an exact set of reals makes essential use of a well-ordering of $\R$; we will show that it is consistent with \s{ZF} (relative to an inaccessible cardinal) that no infinite set of reals is exact. The following definition allows us to relate the notion of exactness to a well-studied regularity property for sets of reals:
    \begin{defn}\label{perfectset:defn}
        A set $P \subseteq \R$ is \emph{perfect} if it is closed and has no isolated points. A set $A \subseteq R$ has the \emph{Perfect Set Property (PSP)} if either $|A| \le \aleph_0$ or there is a non-empty perfect set $P \subseteq A$.
    \end{defn}
    \begin{prop}\label{pspcharacterisation:prop}
        \s{(ZF + AC$_{\omega}(\R)$)} Let $A \subseteq \R$ be uncountable. Then $A$ has the PSP if and only if $A$ has a subset which is order-isomorphic to $\Rr$.\footnote{To the best of our knowledge this equivalence in \s{ZFC} is a folklore result, but we could not easily find a proof in the literature. We include here a proof using only \s{AC$_\omega(\R)$}.}
    \end{prop}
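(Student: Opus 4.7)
The plan is to prove each direction via Cantor-Bendixson-style analysis of closed subsets of $\R$: the forward direction by a direct Cantor-tree construction inside a perfect subset of $A$, and the backward direction by showing that any $B \subseteq \R$ order-isomorphic to $\Rr$ differs from its $\R$-closure by only countably many points, so that a perfect subset of $A$ can be extracted from within $B$.

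For the forward direction, since $A$ is uncountable and has the PSP, I would fix a non-empty perfect $P \subseteq A$ and construct inside $P$, canonically (always selecting closed intervals with smallest-index rational endpoints satisfying the requirements), a Cantor tree $\{I_s : s \in \lessomega\}$ with $I_{s^\frown 0}$ strictly to the $\R$-left of $I_{s^\frown 1}$, nested children, vanishing diameters, and $I_s \cap P$ infinite (using that perfect sets have no isolated points). The branch set $K = \{\bigcap_n I_{\alpha \restriction n} : \alpha \in \twoomega\}$ lies in $P$ by compactness of the $I_s$ and closedness of $P$, and its inherited $\R$-order agrees with $\lex$ on $\twoomega$. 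Since $\Rr$ embeds into $\twoomegalex$ by the footnote on the interembeddability of $\Rr$, $\langle {}^\omega\omega, \lex \rangle$, and $\twoomegalex$, this produces a subset of $K \subseteq A$ order-isomorphic to $\Rr$.

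For the backward direction, fix $B \subseteq A$ order-isomorphic to $\Rr$. The key claim, which I regard as the principal obstacle, is that $\bar B \setminus B$ is countable. To prove it I would first exclude two-sided $\R$-accumulation points of $B$ outside $B$: if $x \in \bar B \setminus B$ were a limit of $B$ from both sides in $\R$, then $B \cap (-\infty, x)$ would be bounded above in $B$ by any $b' \in B \cap (x, \infty)$, and by order-completeness of $B \cong \Rr$ it would have a supremum $s$ in $B$; left-accumulation at $x$ forces $s \geq \sup_\R (B \cap (-\infty, x)) = x$, while $s$ being the least upper bound in $B$ together with right-accumulation (each $b' \in B \cap (x, \infty)$ being a $B$-upper bound) forces $s \leq \inf_\R (B \cap (x, \infty)) = x$, so $x = s \in B$, a contradiction. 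Any remaining $x \in \bar B \setminus B$ is therefore one-sided; for $x$ that is $\R$-left-accumulated but $\R$-right-isolated with $B \cap (x, \infty) \neq \emptyset$, the interval $(x, \inf(B \cap (x, \infty)))$ is a non-empty subinterval of $\R \setminus B$, and the map sending $x$ to its smallest-index rational is injective (for $x < y$ both of this type, left-accumulation of $B$ at $y$ yields some $b \in B \cap (x, y)$, forcing the rational assigned to $x$ to lie below $b$ and hence below that assigned to $y$). The symmetric case and the at most two $\R$-extrema of $B$ contribute only countably many further points, establishing the claim.

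With the claim in hand, $\bar B$ is a closed uncountable subset of $\R$, so Cantor-Bendixson (which holds in \s{ZF} + \s{AC}$_\omega(\R)$, the choice hypothesis being used to collapse the countable union of countable sets of non-condensation points) produces a non-empty perfect $P \subseteq \bar B$ with $\bar B \setminus P$ countable. Then $D \coloneqq P \setminus B \subseteq \bar B \setminus B$ is countable, and a canonical Cantor tree inside $P$ whose stage-$n$ intervals shrink to avoid the $n$-th element of an enumeration of $D$ produces a non-empty perfect $Q \subseteq P \setminus D \subseteq B \subseteq A$, giving the PSP for $A$.
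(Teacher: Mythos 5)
Your proof is correct. The forward direction is essentially the paper's argument: a canonical Cantor scheme inside a perfect subset of $A$ yielding a copy of $\twoomegalex$ and hence of $\Rr$; your variant with nested compact rational intervals and singleton intersections even dispenses with the choice function $\mathcal F$ that the paper uses to select the points $x_s$, since canonical (least-index) selection of intervals suffices. The backward direction, however, takes a genuinely different route. The paper parametrises a set of order type $\lambda$ as $f \im \R$ for order-preserving $f:\R\to\R$, notes that $f$ has only countably many discontinuities, and replaces each $f(x)$ at a discontinuity by the one-sided limits $f^{\pm}(x)$ to obtain a perfect $A'$ with $|A \triangle A'| \le \aleph_0$. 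You instead pass to the closure $\bar B$, prove $|\bar B \setminus B| \le \aleph_0$ directly from the Dedekind completeness of $B \cong \Rr$ (no two-sided accumulation point of $B$ can lie outside $B$, and the one-sided ones inject into $\Q$ via the rationals in their gaps), and then extract a perfect kernel via Cantor--Bendixson; both routes finish with the same lemma that a perfect set minus a countable set contains a perfect set. Your completeness argument is entirely choice-free and arguably more robust than the paper's verification that $A'$ is closed (which invokes \s{AC}$_\omega(\R)$ twice and does not address the case where $\sup A$ or $\inf A$ is a limit of continuity values without being of the form $f^{\pm}(x)$); the cost is the appeal to Cantor--Bendixson, which consumes \s{AC}$_\omega(\R)$ in the form ``a countable union of countable sets of reals is countable'' --- a principle the paper itself uses in Observation \ref{surordinalcountable:obs}, so this is unobjectionable under the stated hypotheses.
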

    \begin{proof}
        $(\!\!\implies\!\!)\,$: Without loss of generality, let $A \subseteq \R$ be perfect. We use a ``Cantor scheme"-style construction to build a subset of $A$ which is order-isomorphic to $\twoomegalex$; this, in turn, will have a subset order-isomorphic to $\Rr$.

        Fix an enumeration $\langle O_n : n \in \omega\rangle$ of the open intervals with rational endpoints. Consider the family $\mathcal A \coloneqq \{O_n \cap A: n \in \omega, O_n \cap A \neq \emptyset\}$; using \s{AC$_\omega(\R)$}, we can find a choice function $\mathcal F : \mathcal A \to A$.

        We now recursively build two families, $\{I_s : s \in \lessomega\}$ and $\{x_s : s \in \lessomega\}$, with the following properties for each $s, t \in \lessomega$:
        \begin{enumerate}
            \item $I_s\subseteq \R$ is an open interval with rational endpoints such that $I_s \cap A \neq \emptyset$;
            \item $I_t \subseteq I_s$ if and only if $s \sqsubseteq t$;
            \item $x_s \in I_s \cap A$;
            \item $x_s < x_t$ if and only if $s \lex t$ in $\lessomega$, where here $s \lex t$ is taken to include the situations $t^\frown\langle0\rangle \sqsubseteq s$ and $s^\frown\langle 1 \rangle \sqsubseteq t$.
        \end{enumerate}
    Let $I_\emptyset$ be that $O_n$ with $n$ minimal such that $O_n \cap A \neq \emptyset$. Now, given $I_s$, we find $x_s, I_{s^\frown\langle0\rangle},$ and $I_{s^\frown\langle1\rangle}$ in the following way: let $n_s^0, n_s^1,$ and $n_s^2$ be such that the $O_{n_s^i}$ are pairwise disjoint subsets of $I_s$, each with diameter $< \frac{1}{3}\text{diam}(I_s)$, such that $O_{n_s^i} < O_{n_s^j}$ if $i < j$ and each $O_{n_s^i} \cap A \neq \emptyset$. We note that this is possible since none of the elements of $A$ are isolated points, and that no use of Choice is required here as we may take the $n_s^i$ to be minimal such in some fixed well-ordering of $\omega^3$. Now let $x_s = \mathcal F\left(O_{n^1_s} \cap A\right)$, $I_{s^\frown \langle 0 \rangle} = O_{n^0_s}$, and $I_{s^\frown \langle 1 \rangle} = O_{n^2_s}$. By construction, conditions 1--4 will be satisfied.

    Now, given $\{x_s : s \in \lessomega\}$, define
    \[A' \coloneqq \left\{\lim_{n\rightarrow\infty} x_{b \restriction n} : b \in \twoomega\right\}.\]
    Then by the construction of the $x_s$, $A'$ is order-isomorphic to $\twoomegalex$, and since $A$ is closed, $A' \subseteq A$.

    $(\:\!\!\!\!\impliedby\!\!\!\!\:)\,$: It suffices to show that any $A \in [\R]^\lambda$ has the PSP. We will show that for any such $A$ we can build a set $A' \subseteq \R$ such that (a) $A'$ is a perfect set; (b) $|A \triangle A'| \le \aleph_0$, and then observe that under such circumstances $A \cap A'$ must have the PSP.
    
    Let $A \in [\R]^\lambda$ and let $f : \R \to \R$ be an order-preserving function such that $A = f \im \R$. For $x \in \R$, write $f^-(x) \coloneqq \sup_{y < x} f(y)$ and $f^+(x) \coloneqq \inf_{y>x} f(y)$; then $f$ is continuous at $x$ if and only if $f^-(x) = f^+(x) = f(x)$. Note that $f$ has at most countably many discontinuities, since for any $x \in \R$ at which $f$ is discontinuous, there is an associated non-empty open interval $(f^-(x), f^+(x))$, which necessarily contains a rational. The value of $f(x)$ itself can be any of the values in this interval or either of its endpoints. Let $A'$ be the set defined from $A$ by replacing the image $f(x)$ of each discontinuity of $f$ by both of the lower and upper limits of $f$ at $x$:
    \[A' \coloneqq \{f^-(x) : x \in \R\} \cup \{f^+(x) : x \in \R\}.\]
    %\begin{align*}
    %    A' \coloneqq &\,\,\,\,\,\,\,\{f(x) : x \in \R\text{, }f\text{ continuous at }x\}\\
    %    &\cup \left\{\sup_{y<x} f(y): x \in \R\text{, }f\text{ discontinuous at }x\right\}\\
    %    &\cup \left\{\inf_{y>x} f(y): x \in \R\text{, }f\text{ discontinuous at }x\right\}.
    %\end{align*}
    Certainly $|A \triangle A'| \le \aleph_0$, as for each of the %at most countably-many
    discontinuities of $f$, we have removed one element of $A$ and added two more. We claim that $A'$ is perfect. It is clear that no element of $A'$ is an isolated point. To see that $A'$ is closed, let $z$ be an element of the closure of $A'$; by \s{AC$_\omega(\R)$}, we may write $z = \lim_{n \rightarrow \infty} z_n$ as a limit of elements of $A'$. Then for each $n \in \omega$, either (i) $z_n = f(x_n)$ for some $x_n$ at which $f$ is continuous, or (ii) $z_n$ is a limit of such points, in which case we may find $x_n$ at which $f$ is continuous such that $|z_n - f(x_n)| < \frac{1}{n}$. Using \s{AC$_\omega(\R)$} to choose such an $x_n$ for each $z_n$ in case (ii), we may express $z$ in the form $z = \lim_{n \rightarrow \infty} f(x_n)$, where $f$ is continuous at each $x_n$. By construction, $A'$ contains all such limits, so $z \in A'$ and $A'$ is closed.

    Finally, we claim that the uncountable set $A \cap A'$ has the PSP, and so \textit{a fortiori} $A$ has the PSP also. This follows from the more general claim that if $P$ is a perfect set and $T \subseteq P$ is countable, then $P \setminus T$ has a perfect subset. To see this, reduce $P$ to some perfect $S = \{\lim_{n \rightarrow \infty} x_{b \restriction n} : b \in \twoomega\}$ with $\{x_s: s \in \lessomega\} \subseteq P$, as in the $(\!\!\implies\!\!)$ direction of this proof, and enumerate $T\cap S$ as $\{t_m : m \in \omega\}$. Then for each $m \in \omega$, let $b_m \in \twoomega$ be such that $t_m = \lim_{n\rightarrow\infty}x_{b_m\restriction n}$, and set
    \[P' \coloneqq \left\{\lim_{n\rightarrow\infty}x_{b \restriction n}: b \in \twoomega, b(2n) \neq b_{n}(2n)\right\}.\]
    Then $P' \subseteq S \setminus T \subseteq P \setminus T$ and $P'$ is perfect.
    \end{proof}
    \begin{cor}\label{pspsolovay:cor}
        \s{(ZF + AC$_\omega(\R)$)} If every set of reals has the PSP then there are no infinite exact sets of reals.
    \end{cor}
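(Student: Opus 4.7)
The plan is to treat the countable and uncountable cases separately. By Proposition~\ref{nodedekindwithcc:prop}, \s{AC}$_\omega(\R)$ excludes Dedekind subsets of $\R$, so every infinite $A \subseteq \R$ is either countably infinite or uncountable. For countably infinite $A$, inexactness follows immediately from Theorem~\ref{dushnikmillercountable:thm}.

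For uncountable $A$, Proposition~\ref{pspcharacterisation:prop} together with the PSP hypothesis yields a subset $R \subseteq A$ with $R \cong \Rr$, and the task reduces to constructing an order-preserving injection $f : A \to A$ with $f(A) \subsetneq A$. The core idea is to exploit the inexactness of $\Rr$---witnessed, for instance, by the order-isomorphism $\R \cong (0,1)$---to obtain an order-preserving embedding $\phi : R \to R$ with $\phi(R) \subsetneq R$, and then to extend $\phi$ to $f$ by declaring $f$ to act as the identity on $A \setminus R$.

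The main obstacle is ensuring that the extension is order-preserving: $\phi$ must respect the Dedekind cut in $R$ that each element of $A \setminus R$ defines. That is, for $a \in A \setminus R$ one needs $\phi\bigl(R \cap (-\infty, a)\bigr) \subseteq (-\infty, a)$ and $\phi\bigl(R \cap (a, \infty)\bigr) \subseteq (a, \infty)$. The cleanest route is to first argue---using PSP for \emph{every} set of reals, not merely for $A$---that $R$ can in fact be chosen to contain a genuine open $\R$-interval $(a,b) \subseteq A$. Granting such an $R$, the required $\phi$ is simply an affine contraction of $(a,b)$ onto a proper subinterval $(a, b') \subsetneq (a,b)$, extended by the identity on $A \setminus (a,b)$; compatibility is automatic because every $A$-element outside $(a,b)$ is fixed by $f$, and the resulting $f(A)$ omits the non-empty set $[b', b) \subseteq (a,b) \subseteq A$, witnessing inexactness. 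Securing this reduction---from ``$R \cong \Rr$ inside $A$'' to ``$R$ contains an open $\R$-interval in $A$''---is the crux of the argument, and will likely involve a refinement of the perfect-set construction in Proposition~\ref{pspcharacterisation:prop}; absent it, one would have to construct $\phi$ compatibly with every cut induced by $A \setminus R$ simultaneously, which appears considerably more delicate.
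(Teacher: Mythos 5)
Your treatment of the countable case is fine, and you have correctly identified the obstruction in the uncountable case: a proper self-embedding $\phi$ of a copy $R \cong \Rr$ inside $A$ cannot in general be extended by the identity on $A \setminus R$, since it need not respect the cuts that points of $A \setminus R$ induce on $R$. However, the reduction you propose to get around this --- choosing $R$ so that it contains a genuine open interval $(a,b) \subseteq A$ --- is not achievable: an uncountable set of reals with the PSP need not contain any open interval. Take $A$ to be the Cantor set, or any perfect nowhere dense set: $A$ is perfect, hence has the PSP and contains a copy of $\Rr$ (as $\twoomegalex$ and $\Rr$ embed into each other), yet $A$ has empty interior, so no $\R$-interval lies inside it. Thus the step you yourself flag as the crux cannot be secured, and the fallback you mention (building $\phi$ compatibly with every cut induced by $A \setminus R$ simultaneously) is left unproved; the uncountable case is therefore genuinely incomplete as written.

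The gap closes once you abandon the ``identity off $R$'' idea and instead move \emph{all} of $A$ into $R$. Since $A \subseteq \R$, the order type of $A$ is a real type, so $A$ order-embeds into $\Rr$ and hence into any copy of $\Rr$; in particular, fixing a proper sub-copy $R' \subsetneq R$ with $R' \cong \Rr$ (which exists because $\Rr$ is inexact, e.g.\ $\R \cong (0,1)$), one obtains an order-embedding $i : A \to R'$. The image $i \im A$ is a subset of $A$ order-isomorphic to $A$, and it is a \emph{proper} subset because it omits the non-empty set $R \setminus R' \subseteq A$. This is the route the paper takes: no compatibility with cuts induced by $A \setminus R$ is needed, because the embedding is not required to fix any point of $A$. (The case $A = \R$ is immediate from $\R \cong (0,1)$.)
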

    \begin{proof}
         Let $A$ be an infinite set of reals. Since by assumption $A$ has the PSP, then either $A$ is countable, and so inexact by Theorem \ref{dushnikmillercountable:thm}, or it contains a perfect subset. By Proposition \ref{pspcharacterisation:prop}, it follows that if $A \subsetneq \R$ is uncountable, then we can find a subset of $A$ which is order-isomorphic to $\Rr$; since we can reduce this to a proper subset which is order-isomorphic to $A$, it follows that $A$ is inexact. Clearly $\R$ itself is also inexact.
    \end{proof}
    
    It follows from the above that the statement ``all infinite sets of reals are inexact" holds in two natural settings for the discussion of IEPRs. In Solovay's model, every set of reals has the PSP, and so every infinite set of reals is inexact. In particular, the statement that every infinite set of reals is inexact is consistent with \s{ZF} relative to an inaccessible cardinal. The same holds in models of \s{ZF + AD}: the Axiom of Determinacy \s{AD}, which states that every set of reals is determined, implies both that every set of reals has the perfect set property and that \s{AC$_\omega(\R)$} holds (see e.g.\ \cite[\S27]{kanamori}).
    We come now to our second major result, which underlies the proof of the remainder of Theorem \ref{mainthm} and the proof of Theorem \ref{solovay:theorem}. In contrast to Observation \ref{exacttrivial:obs}, this result shows that, essentially, if a real type $\tau$ has two inexact parts, then we can ``play them against each other" to define a colouring witnessing $\Rr \centernot \rightarrow (\tau)^\tau$.
	\begin{prop}\label{sumofinexacts:prop}
		Let $\tau$ be a real type. If $\tau$ can be written as a sum $\tau = \sigma_0 + \sigma_1$ where each $\sigma_i$ is inexact, then $\Rr \centernot \rightarrow (\tau)^\tau$.
	\end{prop}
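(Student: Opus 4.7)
The strategy is to construct an explicit 2-colouring $F \colon [\R]^\tau \to 2$ of the $\tau$-copies in $\R$ with no homogeneous set, crucially exploiting that both $\sigma_0$ and $\sigma_1$ have proper order-preserving self-embeddings. To each $A \in [\R]^\tau$ I would associate an invariant $n(A) \in \omega$, defined from the enumeration $\langle q_n : n \in \omega\rangle$ of rationals together with a decomposition $A = A_0 + A_1$ (with $A_i$ of type $\sigma_i$), and set $F(A) = n(A) \bmod 2$. The first candidate for $n(A)$ is the smallest $n$ such that $q_n$ separates $A_0$ from $A_1$ in $\R$, i.e.\ lies in the ``gap'' $(\sup A_0, \inf A_1)$, in the spirit of the $N$-function of Lemma \ref{omega:lemma}. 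When this gap is empty or contains no rational (for instance when $\sup A_0 = \inf A_1$ is irrational) the definition must be refined, perhaps by combining $N$-values computed from sequences of elements of $A_0$ and $A_1$ converging to the split; and since the decomposition $A = A_0 + A_1$ need not be unique, a canonical choice has to be fixed up front.

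To show $F$ has no homogeneous set I would show that every $A$ admits some $B \in [A]^\tau$ with $F(B) \ne F(A)$. The inexactness of $\sigma_0$ and $\sigma_1$ yields proper sub-copies $A_0' \subsetneq A_0$ of type $\sigma_0$ and $A_1' \subsetneq A_1$ of type $\sigma_1$, from which we build $B = A_0' + A_1' \subseteq A$ of type $\tau$. By varying the shrinks on each side independently, the plan is to control $n(B)$ relative to $n(A)$ well enough to realise both parities as $B$ ranges over $[A]^\tau$.

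The main obstacle is that the available shrinks can be very restricted. For instance, when $\tau = \omega + \omega^*$, every infinite sub-copy $A_0' \subseteq A_0$ of type $\omega$ is automatically cofinal in $A_0$ and every $A_1' \subseteq A_1$ of type $\omega^*$ is coinitial in $A_1$, so the gap $(\sup A_0, \inf A_1)$ is an \emph{invariant} of $A$ as $B$ ranges over $[A]^\tau$; any definition of $n(A)$ reading only from the gap would then be constant on $[A]^\tau$ and produce no parity flip. The invariant must therefore be defined from $N$-values of actual pairs of elements of $A_0$ and $A_1$, whose positions the shrinks genuinely do control, and the technical core of the argument is to find a single such definition that uniformly covers all $\sigma_0, \sigma_1$---handling unique versus non-unique decompositions, the presence or absence of maxima on the left and minima on the right, and rational versus irrational split values---while still admitting a parity-changing shrink on every copy of $\tau$.
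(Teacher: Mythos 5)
You have the right skeleton --- an explicit colouring of $[\R]^\tau$ built from a canonically chosen split of $A$ into a $\sigma_0$-part and a $\sigma_1$-part, with rational-indexed invariants and a colour flip achieved by shrinking one side --- and you correctly diagnose why your first candidate invariant fails (the gap $(\sup A_0,\inf A_1)$ can be empty, irrational, or, as in $\omega+\omega^*$, invariant under all shrinks). But the ``technical core'' you defer at the end is the actual content of the proof, and the two ideas that make it work are missing. First, the invariant attached to an inexact piece $X$ should not be read off from where $X$ sits but from what can be \emph{removed} from it: say a rational interval $O_n$ meets $X$ removably if $O_n\cap X\neq\emptyset$ yet some $Y\in[X]^{\sigma_i}$ has $O_n\cap Y=\emptyset$. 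Inexactness enters exactly here --- from a non-identity order-preserving $f:X\to X$ with $f(z)>z$, the map sending $x\mapsto x$ for $x<z$ and $x\mapsto f(f(x))$ for $x\ge z$ gives a $\sigma_i$-copy missing the nonempty interval $(z,f(f(z)))\cap X$ --- so removably-meeting intervals always exist, and one sets $m(X)$ to be the least such index $n$. Second, the colouring should not be a parity but a \emph{comparison}: colour $A$ by whether $m(A^-)>m(A^+)$ or not. The point is that shrinking one side only gives you the ability to make its $m$-value \emph{arbitrarily large} while holding the other side fixed (avoid any finite set of removably-meeting intervals); it gives you no control over the parity of the resulting minimum, so $n(B)\bmod 2$ cannot be steered to both values, whereas the inequality $m(B^-)$ versus $m(B^+)$ can always be reversed. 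Combined with the canonical-separator bookkeeping you already flag (choosing $x_A$ as the unique separating point or the least-index separating rational, and checking that the sub-copies $B$ used satisfy $x_B=x_A$), this yields $\Rr\centernot\rightarrow(\tau)^\tau$. As written, your proposal identifies the problem but does not solve it.
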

	\begin{proof}
		As before, let $\langle q_n : n \in \omega\rangle$ be a fixed enumeration of $\Q$ and $\langle O_n : n \in \omega\rangle$ a fixed enumeration of the open intervals of $\R$ with rational endpoints.
		
		\begin{claim}\label{removable:claim} If $\sigma$ is an inexact real type and $X \in [\R]^\sigma$, then there exist $Y \in [X]^\sigma$ and an open interval $O \subseteq \R$ such that $O \cap X \neq \emptyset$ but $O \cap Y = \emptyset$.\end{claim}
		
		\begin{claimproof} Let $f : X \to X$ be an order-preserving map other than the identity, and let $z \in X$ be such that $f(z) \neq z$; wlog $f(z) > z$. Since $f$ is order-preserving, we have then that $z < f(z) < f(f(z))$. Define a function $g: X \to X$ by, for $x \in X$,
		\[g(x) = \begin{cases*}
			x & if $x < z$;\\
			f(f(x)) & if $x \ge z.$
		\end{cases*}
		\]
		This is easily checked to be order-preserving. Write $Y \coloneqq g\im X$. By definition of $g$, the open interval $O \coloneqq (z,f(f(z)))$ has empty intersection with $Y$, but its intersection with $X$ contains $f(z)$, so is non-empty.\end{claimproof}
		
		Let us say that such an $O$ \emph{meets $X$ removably}. Note that if $O$ meets $X$ removably, there is some $O_n \subseteq O$ an open interval with rational endpoints which also meets $X$ removably.
		
		Now let $\tau$ be a real type which can be decomposed as a concatenation of two inexact order types. Observe that for such a $\tau$, given any $A \in [\R]^\tau$, there exists some $x \in \R$ such that $A \cap (-\infty,x)$ and $A \cap (x,\infty)$ are both inexact. Say that such an $x$ \emph{separates} $A$. Now, for $A \in [\R]^\tau$, we define $x_A \in \R$ like so: if there is a unique $x \in \R$ such that $x$ separates $A$, then $x_A$ is that unique $x$; otherwise, the set of real numbers separating $A$ is convex, and we let $x_A$ be that $q_n$ with $n$ minimal such that $q_n$ separates $A$. Write
		\[\begin{split}
			A^- &\coloneqq A \cap (-\infty,x_A);\\
			A^+ &\coloneqq A \cap (x_A,\infty).
		\end{split}\]
		
		Now, for $X \subseteq \R$ with $X$ inexact, write
		\[m(X) \coloneqq \min\{n \in \omega: O_n \text{ meets }X\text{ removably}\},\]
		and define the colouring $F: [\R]^\tau\to 2$ by, for $A \in [\R]^\tau$,
		\[F(A) = \begin{cases*}
			0 & if $m(A^-) > m(A^+)$;\\
			1 & if $m(A^-) \le m(A^+)$.
		\end{cases*}\]
		We claim that this colouring has no homogeneous set. Suppose $A \in [\R]^\tau$; we will show that $A$ is not homogeneous. If all elements of $[A]^\tau$ have the property that only one real number separates them (case 1), then we work directly with this $A$; otherwise, some element of $[A]^\tau$ is separated by a rational (case 2): in this case we let $n^*$ be the minimal index of a rational separating any element of $[A]^\tau$, and assume, by reducing $A$ if necessary, that $q_{n^*}$ separates $A$, so $x_A = q_{n^*}$.\footnote{One can in fact show that $q_{n^*}$ necessarily separates $A$, but this makes the proof longer than it needs to be so is omitted.}
		
		Now, in either of the above cases, observe that we can find $B \in [A]^\tau$ with the property that $m(B^-) = m(A^-)$ and $m(B^+)$ is arbitrarily large in the following way: we restrict our attention to those $B$ with $B \cap (-\infty,x_A] = A \cap (-\infty,x_A]$ and $B \cap (x_A,\infty) \in [A^+]^{\text{otp}(A^+)}$, as any such $B$ is separated by $x_A$, and so in fact $x_B = x_A$ (by uniqueness in case 1, and by minimality in case 2). For such a $B$, $B^- = A^-$ and $B^+$ can range over any element of $[A^+]^{\text{otp}(A^+)}$. In particular, for any finite number of $O_n$ which meet $A^+$ removably, we can take $B^+$ to be an element of $[A^+]^{\text{otp}(A^+)}$ meeting none of them, and thereby make $m(B^+)$ as large as we like while keeping $m(B^-) = m(A^-)$; similarly we can find $B \in [A]^\tau$ with $m(B^-)$ arbitrarily large and $m(B^+) = m(A^+)$. In this way we can always move from any $A \in [\R]^\tau$ to some $B \in [A]^\tau$ with $F(B) \neq F(A)$.
	\end{proof}
	\begin{cor}\label{thm1negative:cor}
		If $\tau$ is a countably infinite order type not of the form $\omega + k$ or $k + \omega^*$ for some natural number $k$, then $\Rr \centernot \rightarrow (\tau)^\tau$.
	\end{cor}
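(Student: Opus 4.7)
The plan is to reduce Corollary \ref{thm1negative:cor} to Proposition \ref{sumofinexacts:prop}. Since every countable linear order embeds into $\Q$ and hence $\R$, any countably infinite $\tau$ is a real type, so Proposition \ref{sumofinexacts:prop} applies in principle. Moreover, by Theorem \ref{dushnikmillercountable:thm} every countably infinite order type is inexact, so if $\tau$ admits any decomposition $\tau = \sigma_0 + \sigma_1$ with both $\sigma_i$ infinite, then both summands are countably infinite and hence inexact, and Proposition \ref{sumofinexacts:prop} immediately yields $\Rr \centernot \rightarrow (\tau)^\tau$.

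The work, then, is to show the contrapositive: if a countably infinite $\tau$ admits no decomposition into two infinite summands, then $\tau$ is of the form $\omega + k$ or $k + \omega^*$. Fixing a realization $\L$ of $\tau$, decompositions correspond to initial segments of $L$, so the assumption becomes: every initial segment of $L$ is either finite or cofinite. I would then introduce $U \coloneqq \bigcup\{I : I \text{ a finite initial segment of } L\}$, which is itself an initial segment, and observe that $U$ is precisely the set of $y \in L$ with $\{z \in L : z \le y\}$ finite. The plan is to split on whether $U = L$: in the first case every principal initial segment is finite, and the usual ``take successive minima'' argument forces $L$ to be order-isomorphic to $\omega$; in the second, $V \coloneqq L \setminus U$ is a non-empty final segment each of whose elements has only finitely many elements above (because $\{z : z \le y\}$ is an infinite, hence cofinite, initial segment whenever $y \in V$), and a symmetric ``successive maxima'' argument shows $V$ is order-isomorphic to some $k \in \omega$ or to $\omega^*$. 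Combining the possibilities for $U$ and $V$ yields $\tau \in \{\omega + k, k + \omega^*\}$.

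The main obstacle will be ruling out the ``bad'' combination $\tau = \omega + \omega^*$, which a priori could arise from $U \cong \omega$ and $V \cong \omega^*$; however, this is immediately excluded by the running hypothesis, since in that case $U$ itself would be an infinite initial segment with infinite complement, contradicting the assumption that every initial segment of $L$ is finite or cofinite. Everything else is routine verification about countable linear orders, and the corollary follows.
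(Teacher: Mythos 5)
Your proposal is correct and follows essentially the same route as the paper: reduce to Proposition \ref{sumofinexacts:prop} via Theorem \ref{dushnikmillercountable:thm}, then show by contraposition that a countably infinite type admitting no decomposition into two infinite (hence inexact) summands must be $\omega + k$ or $k + \omega^*$. The only difference is cosmetic: you carry out that classification abstractly via initial segments of $L$ (your set $U$ of points with finitely many predecessors), whereas the paper does it with real cuts $A \cap (-\infty,x)$ of a realization $A \subseteq \R$ and the pivot $x^* = \sup\{x : (x,\infty)\cap A \text{ is infinite}\}$; both versions are routine and sound.
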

	\begin{proof}
		Let $\tau$ be a countably infinite order type which cannot be written as a sum of two inexact order types; we will show that it is of the form $\omega + k$ or $k + \omega^*$ for some $k \in \omega$. Let $A \in [\R]^\tau$ and observe that there cannot be an $x \in \R$ separating $A$. Since every countably infinite linear order is inexact, it follows that for any $x \in \R$ it must be the case that precisely one of $(-\infty, x)\cap A$ or $(x,\infty)\cap A$ is infinite.
		
		If $\sup \{x \in \R: (x,\infty)\cap A \text{ is infinite}\} < \inf \{x \in \R:(-\infty,x) \cap A \text{ is infinite}\}$, then any real between these separates $A$, so they must be equal; say \begin{align*}
		    x^* \coloneqq &\sup \{x \in \R: (x,\infty)\cap A \text{ is infinite}\}\\ =& \inf \{x \in \R:(-\infty,x) \cap A \text{ is infinite}\}.
		      \end{align*}
        If $(-\infty,x^*)\cap A$ is infinite, then $(x^*,\infty)\cap A$ is finite and for any $z < x^*$, $(-\infty,z) \cap A$ is finite; it follows that $A$ has order type $\omega + k$ for some finite $k$. Similarly, if $(x^*,\infty)\cap A$ is infinite we get that $A$ has order type $k + \omega^*$ for some finite $k$.
	\end{proof}
	This completes the proof of Theorem \ref{mainthm}. Theorem \ref{solovay:theorem} will also be deduced as a corollary of Proposition \ref{sumofinexacts:prop} in much the same way.
	\begin{cor}\label{thm2:cor}
		If every infinite real type is inexact, then every uncountable real type $\tau$ is a sum of two inexact order types; in particular, $\Rr \centernot \rightarrow (\tau)^\tau$.
	\end{cor}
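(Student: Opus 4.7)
The proposal splits into two moves: first, show that every uncountable real type $\tau$ can be written as a sum of two infinite real types; second, invoke the hypothesis to upgrade these summands to inexact real types and appeal to Proposition~\ref{sumofinexacts:prop}.

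For the first move, I would fix some $A \in [\R]^\tau$ and produce a point $z \in \R$ such that both $A \cap (-\infty, z)$ and $A \cap (z, \infty)$ are infinite, mirroring the reasoning in Corollary~\ref{thm1negative:cor}. Concretely, set $S = \{z : A \cap (-\infty, z) \text{ is finite}\}$ and $T = \{z : A \cap (z, \infty) \text{ is finite}\}$. If no splitting $z$ exists then $S \cup T = \R$; since $S$ is downward-closed and $T$ upward-closed, this forces $\sup S = \inf T$ (otherwise a gap would give a point in neither set, and neither can be all of $\R$ since that would already make $A$ countable). A short case analysis on which of $S, T$ contains the common value $z^*$ then shows that $A$ decomposes as a finite set together with a countable union of finite subsets of $\R$ of the form $\bigcup_n A \cap (z^* + 1/n, \infty)$ (and symmetrically on the left). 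Such a set is countable without any appeal to Choice, since finite subsets of $\R$ have canonical increasing enumerations that concatenate into an injection into $\omega$. This contradicts the uncountability of $\tau$.

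Having obtained $z$, set $\sigma_0 = \text{otp}(A \cap (-\infty, z])$ and $\sigma_1 = \text{otp}(A \cap (z, \infty))$ (choosing the half-open split arbitrarily when $z \in A$). Both $\sigma_i$ are infinite real types and $\tau = \sigma_0 + \sigma_1$. The hypothesis that every infinite real type is inexact then makes each $\sigma_i$ inexact, and Proposition~\ref{sumofinexacts:prop} delivers $\Rr \centernot\rightarrow (\tau)^\tau$. The only mild obstacle is the ZF bookkeeping showing that a countable union of finite subsets of $\R$ is genuinely countable in the absence of Choice; everything else is a direct reapplication of ideas already in the paper.
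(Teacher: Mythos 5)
Your proof is correct and takes essentially the same route as the paper's, which simply reruns the cut argument of Corollary~\ref{thm1negative:cor} under the stronger hypothesis: your $\sup S = \inf T$ analysis is that argument with the sets complemented, stopping at ``$A$ is countable'' rather than pinning down the exact type $\omega + k$ or $k + \omega^*$, which is all the uncountable case needs. Your explicit note that a countable union of finite sets of reals is countable in \s{ZF} via canonical increasing enumerations is a detail the paper leaves implicit.
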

	\begin{proof}
		The proof is exactly as above for Corollary \ref{thm1negative:cor} using the extra assumption that \textit{all} infinite sets of reals are inexact, not just countable ones.
	\end{proof}
        \begin{proof}[Proof of Theorem \ref{solovay:theorem}]
            By Corollary \ref{thm2:cor}, if all sets of reals have the PSP, then for all uncountable $\tau$, $\Rr \centernot \rightarrow (\tau)^\tau$. For the converse, note that if there is an exact set of reals, then, writing $\tau$ for its order type, $\Rr \rightarrow (\tau)^\tau$ by Observation \ref{exacttrivial:obs}. Finally, the fact that every infinite set of reals is inexact if every set of reals has the PSP is precisely Corollary \ref{pspsolovay:cor}.
        \end{proof}
    
    By the discussion after Corollary \ref{pspsolovay:cor}, we see that in Solovay's model and in models of \s{ZF + AD}, $\Rr \centernot \rightarrow (\tau)^\tau$ for all uncountable $\tau$.

	\section{Larger homogeneous sets}\label{largerhomogeneous:section}
	So far in this paper we have focused on minimal relations, i.e.\ those of the form $\Rr \rightarrow(\tau)^\tau$. In this section, we present a full classification of the pairs of order types $\sigma,\tau$ with $\tau$ countably infinite for which $\Rr \rightarrow (\sigma)^\tau$ is consistent with \s{ZF + AC$_\omega(\R)$}, and we show that all of these relations hold in Solovay's model. It remains open whether the relations described in Proposition \ref{characterisationofctbleexp:prop}(c) are equivalent to $\omega \rightarrow (\omega)^\omega$.
    
    The results of this section can be summarised as follows:
    \begin{prop}\label{characterisationofctbleexp:prop}
        \s{(}\s{ZF + AC}$_\omega(\R)$\s{)} The only partition relations $\Rr \rightarrow (\sigma)^\tau$ with $\tau$ countably infinite which can hold are the following:
        \begin{enumerate}[(a)]
            \item $\Rr \rightarrow (\omega + n)^{\omega + k}$ and $\Rr \rightarrow (n + \omega^*)^{k + \omega^*}$ for $k \le n$ natural numbers;
            \item $\Rr \rightarrow (\omega + \alpha^*)^\omega$ and $\Rr \rightarrow (\alpha + \omega^*)^{\omega^*}$ for $\alpha$ a countable ordinal;
            \item $\Rr \rightarrow (\omega + \psi)^{\omega + 1}$ and $\Rr \rightarrow (\psi^* + \omega^*)^{1 + \omega^*}$ for $\psi$ any countable order type with $\omega + 1 \centernot \le \psi$.\footnote{The structure of such $\psi$ (reverse \emph{surordinals}) is discussed e.g.\ in \cite[pp. 814--815]{equimorphychains}, citing work of Jullien in \cite{jullienthesis}.}
        \end{enumerate}
        Moreover, (a) and (b) are equivalent to $\omega \rightarrow (\omega)^\omega$, and all three of (a), (b) and (c) hold in Solovay's model.
    \end{prop}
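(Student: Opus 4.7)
The plan is to prove this classification in two movements: restricting the possible $(\sigma,\tau)$ pairs, and verifying sufficiency for those on the list.

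First, by Theorem \ref{mainthm} the exponent $\tau$ must be of the form $\omega + k$ or $k + \omega^*$ for some $k \in \omega$; reversing the orientation of $\R$ exchanges these two families, so I would focus on $\tau = \omega + k$ and read off the dual relations for free. Observation \ref{nouncountablehomog:obs} forces $\sigma$ to be countable, and by monotonicity $\sigma$ is a countable real type with $\sigma \ge \omega + k$. The classification then splits by $k$: $k \ge 2$ gives class (a), $k = 0$ gives (b), and $k = 1$ gives (c).

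For the negative direction I would adapt the colouring of Proposition \ref{sumofinexacts:prop}, which assigns to each $(\omega+k)$-sized subset of $\R$ a colour based on the minimum $N$-index of a rational open interval meeting some designated inexact piece removably. If $\sigma \ge \omega + k$ lies outside the listed forms, then $\sigma$ contains two inexact pieces that can be played against each other to force a colour flip: for $k \ge 2$, any $\sigma$ other than $\omega + n$ (finite $n$) has an infinite tail whose top $k$ positions can be shifted into different subintervals; for $k = 0$, any extra structure beyond a reverse ordinal $\alpha^*$ on top is itself inexact; for $k = 1$, the condition $\omega + 1 \not\le \psi$ is precisely what prevents $\psi$ from housing a second $(\omega + 1)$-configuration that would support such a colouring.

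For the positive direction under \s{AC}$_\omega(\R)$, I would derive (a) and (b) from $\omega \rightarrow (\omega)^\omega$. Case (b) is quick: an $\omega$-subset of a set of type $\omega + \alpha^*$ must lie entirely in the base $\omega$-part (the reverse-ordinal top contains no ascending $\omega$-chain), so it suffices to find a bounded homogeneous $\omega$-sequence via $\Rr \rightarrow (\omega)^\omega$ (working inside a bounded copy of $\R$ such as $(0,1)$) and place any $\alpha^*$-shaped set of reals above it, which is possible since $\alpha$ is countable. For (a), given $F : [\R]^{\omega+k} \to 2$, I would start with a finite top $n_H$ of size $R(n, k; 2)$ (a finite Ramsey number) above some threshold, iteratively refine the base $\omega_H$ via $\omega \rightarrow (\omega)^\omega$ so that for each $k$-subset $B \subseteq n_H$ the value $F(A \cup B)$ is constant in $A \in [\omega_H]^\omega$, and then apply finite Ramsey to the induced colouring of $[n_H]^k$ to extract an $n$-element sub-top on which this constant is uniform in $B$. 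The converse implications are immediate, as (a) and (b) both include $\Rr \rightarrow (\omega)^\omega$ as a special case.

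Finally, for (c) in Solovay's model, $\omega \rightarrow (\omega)^\omega$ does not obviously suffice (hence the open question), and I would instead exploit the PSP. The constraint $\omega + 1 \not\le \psi$ makes $\psi$ a reverse surordinal, forcing every ascending $\omega$-chain in an $(\omega + \psi)$-subset of $\R$ to lie — up to finitely many elements from $\psi$ — inside the initial $\omega$-part. A Mathias-style argument for the $\omega$-part combined with a perfect-set embedding of $\psi$ on top should yield the homogeneous $(\omega + \psi)$-set. The main obstacle is this last step: the interplay between the $\omega$-chain and the $\psi$-tail under an $(\omega + 1)$-exponent colouring genuinely requires the PSP beyond $\omega \rightarrow (\omega)^\omega$, which is precisely the content of the open problem.
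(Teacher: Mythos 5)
Your overall architecture for parts (a) and (b) — restrict $\tau$ via Theorem \ref{mainthm}, split by $k$, derive the positive relations from $\omega \rightarrow (\omega)^\omega$ by a finite-Ramsey refinement of the top and by padding a bounded homogeneous $\omega$-set with an $\alpha^*$-set above it — matches the paper's Lemma \ref{charpositive:lemma} closely enough. But there are two genuine gaps. First, the negative direction. Adapting the removable-interval colouring of Proposition \ref{sumofinexacts:prop} is not the right tool: that colouring is defined on $[\R]^\tau$ where $\tau$ is itself the intended homogeneous type, whereas here you must define a colouring on $[\R]^{\omega+k}$ and show that no \emph{larger} $\sigma$-set is homogeneous for it; ``$\sigma$ contains two inexact pieces'' does not tell you what colour an individual $(\omega+k)$-set receives. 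The paper's Lemma \ref{charnegative:lemma} instead compares $N(x_0,x_1)$ with $N(x_2,x_3)$ (killing $\omega^*+\omega+k$ and $\omega+\omega+k$) and, for $k\ge 2$, $N(x_0,x_1)$ with $N(x_\omega,x_{\omega+1})$ (killing $\omega+\omega$ and $\omega+\omega^*$); you need colourings of this kind. Relatedly, Observation \ref{nouncountablehomog:obs} does not force $\sigma$ to be countable: its colouring lives on $[\R]^{\aleph_0}$ and is constant when restricted to $[\R]^{\omega+k}$ for $k\ge 1$, so it says nothing about order-type relations with exponent $\omega+k$. Countability of $\sigma$ comes from the negative colourings just described together with Observations \ref{ordinaliff:obs} and \ref{surordinalcountable:obs}, which is where \s{AC}$_\omega(\R)$ enters.

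Second, and more seriously, part (c) is not proved: you name the interaction between the $\omega$-part and the $\psi$-tail as ``the main obstacle'' and assert that it ``should'' follow from the PSP, but the PSP is not what drives the paper's argument. The paper recasts $\Rr\rightarrow(\omega+\psi)^{\omega+1}$ as the polarised relation $\binom{\omega}{\langle\R_{\ge0},<\rangle}\rightarrow\binom{\omega}{\eta}^{\omega,1}$ (Lemmas \ref{recharonreals:lemma} and \ref{recharonomega:lemma}) and then works in $L(\R)^{V[G]}$: the colouring is definable from a real $a$, a Mathias real $r$ over $V[a]$ is almost homogeneous for every section $F_z$ with $z\in\R^{V[a]}$ by pure decision, and a pigeonhole over the (still uncountable, by properness of Mathias forcing) set $\R^{V[a]}$ yields an $\eta$-ordered $H_1$ with a uniform tail $r\setminus n^*$ and uniform colour; universality of $\eta$ then handles arbitrary countable $\psi$ with $\omega+1\not\le\psi$. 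None of this is a ``perfect-set embedding of $\psi$ on top.'' You have also misplaced the open problem: what is open is whether (c) follows from $\omega\rightarrow(\omega)^\omega$ alone, not whether (c) holds in Solovay's model — the latter is precisely Proposition \ref{schilhan:prop}, which your proposal leaves unestablished.
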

    We remark that in conjunction with Corollary \ref{pspsolovay:cor} and Corollary \ref{thm2:cor}, this gives a full characterisation of the relation $\Rr \rightarrow (\sigma)^\tau$ in Solovay's model. A brief note on the use of \s{AC}$_\omega(\R)$ here:
    \begin{obs}\label{ordinaliff:obs}
        \s{(}\s{ZF + AC}$_\omega(\R)$\s{)} For $\varphi$ a real type, $\varphi$ is an ordinal (in particular, a countable ordinal) iff $\omega^* \not \le \varphi$.\footnote{Without the assumption that $\varphi$ is a real type, this equivalence requires \s{DC}.}
    \end{obs}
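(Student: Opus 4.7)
The plan is to prove the two implications separately; only one direction has any content. The forward direction is immediate and needs no choice: an ordinal is well-founded, so any realisation in $\R$ contains no strictly decreasing $\omega$-sequence and hence $\omega^* \not\le \varphi$.

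For the converse, fix $A \subseteq \R$ of order type $\varphi$ and assume $\omega^* \not\le \varphi$. The plan is first to show $A$ is well-ordered (so $\varphi$ is an ordinal), and then to deduce $\varphi < \omega_1$. For well-ordering, suppose toward a contradiction that some nonempty $B \subseteq A$ has no minimum, and set $c \coloneqq \inf B \in [-\infty,\infty)$; note $c \notin B$. Define
\[B_n \coloneqq \{b \in B : b < c + 2^{-n}\}\quad\text{(or }\{b \in B : b < -n\}\text{ if } c = -\infty\text{)}.\]
Each $B_n$ is a nonempty subset of $\R$, so by \s{AC}$_\omega(\R)$ there is a choice function $n \mapsto b_n \in B_n$. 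By construction $b_n > c$ and $b_n \to c$. I then extract a strictly decreasing subsequence in ZF alone by setting $n_0 \coloneqq 0$ and $n_{k+1} \coloneqq \min\{n : b_n < b_{n_k}\}$; this minimum exists because $b_{n_k} > c = \lim_n b_n$. The resulting sequence witnesses $\omega^* \le \varphi$, a contradiction.

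To finish I must rule out $\varphi \ge \omega_1$. This is the standard ZF fact: any order-preserving $f : \omega_1 \to A$ produces pairwise disjoint nonempty open intervals $(f(\alpha), f(\alpha+1))$, and using the fixed enumeration $\langle q_n : n \in \omega\rangle$ of $\Q$, the map sending $\alpha$ to the least $n$ with $q_n \in (f(\alpha), f(\alpha+1))$ is an injection $\omega_1 \hookrightarrow \omega$, which is absurd. No choice is required here. The one genuine use of \s{AC}$_\omega(\R)$ is producing $\langle b_n\rangle$; the thinning to a decreasing subsequence is carried out by taking least indices precisely to avoid invoking \s{DC}, which is the only real subtlety of the argument.
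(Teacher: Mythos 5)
Your proof is correct and follows essentially the same route as the paper's: both use \s{AC}$_\omega(\R)$ to choose points from a countable family of shrinking neighbourhoods of an unattained infimum and then extract a copy of $\omega^*$ from the chosen sequence. You are in fact slightly more careful than the paper in two respects --- you first pass to a nonempty subset with no minimum (the paper works with $X$ itself, whose infimum might be attained) and you justify the parenthetical ``countable'' via the standard disjoint-rational-intervals argument --- but these are refinements of the same argument rather than a genuinely different approach.
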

    \begin{proof}
        Clearly if $\omega^* \le \varphi$ then $\varphi$ is not an ordinal, so focus on the other direction. Assume $\varphi$ is not well-ordered. Let $X \in [(0,1)]^\varphi$, so $\inf X$ exists. We will find a copy of $\omega^*$ in $X$. For $n \in \omega$ let $X_n \coloneqq \left(\inf X, \inf X + \frac{1}{n}\right)\cap X$. Then $\{X_n : n \in \omega\}$ is a countable family of non-empty sets of reals. By \s{AC}$_\omega(\R)$ we can choose an $x_n \in X_n$ for each $n \in \omega$; then $\{x_n : n \in \omega\} \subseteq X$ contains a copy of $\omega^*$.
    \end{proof}
    \begin{obs}\label{surordinalcountable:obs}
        \s{(}\s{ZF + AC}$_\omega(\R)$\s{)} For $\varphi$ a real type, if $1 + \omega^* \not \le \varphi$ then $\varphi$ is at most countable.
    \end{obs}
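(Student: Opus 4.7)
The plan is to suppose $X \subseteq \R$ has order type $\varphi$ with $1 + \omega^* \not\le \varphi$ and to show directly that $X$ must be countable, by exploiting the hypothesis to force every element of $X$ to have an immediate successor in $X$, after which a standard ``disjoint intervals each contain a rational" argument finishes the proof. The rough intuition is: the forbidden configuration $1 + \omega^*$ in $X$ corresponds exactly to the existence of some $y \in X$ together with a strictly descending $\omega$-sequence in $X \cap (y,\infty)$, so forbidding it should pin down a discrete-like structure above every point.

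First I would show that for each $y \in X$ with $X \cap (y,\infty) \neq \emptyset$, the set $X \cap (y,\infty)$ has a minimum. Let $z := \inf X \cap (y,\infty) \ge y$. The goal is $z > y$ and $z \in X$. In the two failure cases ($z = y$, or $z > y$ with $z \not\in X$), the set $X \cap \bigl(z, z + \tfrac{1}{n+1}\bigr)$ is nonempty for every $n \in \omega$, so \s{AC}$_\omega(\R)$ supplies a sequence $(x_n)_{n \in \omega}$ with $x_n$ in each such intersection. A straightforward recursive pruning --- take $y_0 := x_0$ and $y_{k+1} := x_m$ for the least $m$ with $x_m < y_k$, which exists since $x_n \to z < y_k$ --- produces a strictly descending $\omega$-sequence of elements of $X$ all lying strictly above $y$. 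Together with $y$ itself this is a copy of $1 + \omega^*$ in $X$, contradicting the hypothesis. Hence $s(y) := z$ is a genuine minimum of $X \cap (y,\infty)$, strictly greater than $y$, and $(y, s(y)) \cap X = \emptyset$.

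Next I would note that for $y_1 < y_2$ in $X$ (both having successors), $y_2 \in X \cap (y_1,\infty)$ forces $s(y_1) \le y_2 < s(y_2)$, so the open intervals $(y, s(y))$ are pairwise disjoint as $y$ ranges over $X \setminus \{\max X\}$ (the set $\max X$ being a singleton or empty depending on whether $X$ has a largest element). Using the fixed enumeration $\langle q_n : n \in \omega\rangle$ of $\Q$, the map $y \mapsto \min\{n \in \omega : q_n \in (y, s(y))\}$ is therefore a well-defined injection of $X \setminus \{\max X\}$ into $\omega$, so $X$ is at most countable, as required.

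The only step requiring any care is the appeal to \s{AC}$_\omega(\R)$ in producing the descending sequence: without some form of countable choice, the failure of $\inf X \cap (y,\infty)$ to be attained in $X$ does not automatically produce a witnessing $\omega^*$-sequence inside $X$, so this is precisely where the hypothesis \s{AC}$_\omega(\R)$ is doing its work --- everything else is combinatorial bookkeeping using a fixed enumeration of the rationals.
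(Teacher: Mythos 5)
Your proof is correct, but it takes a genuinely different route from the paper's. The paper first handles the case $\omega^* \not\le \varphi$ via Observation \ref{ordinaliff:obs} (such a $\varphi$ is a countable ordinal), and otherwise fixes a copy $x_0 > x_1 > \dots$ of $\omega^*$ in $X$ and notes that $1 + \omega^* \not\le \varphi$ forces $X = \bigcup_{n\in\omega}\bigl(X \cap (x_n,\infty)\bigr)$ with each piece containing no copy of $\omega^*$, hence each piece is a countable ordinal by Observation \ref{ordinaliff:obs} again; \s{AC}$_\omega(\R)$ is then invoked a second time to conclude that a countable union of countable sets of reals is countable. You instead prove directly that every non-maximal $y \in X$ has an immediate successor $s(y) \in X$ with $(y,s(y)) \cap X = \emptyset$, using \s{AC}$_\omega(\R)$ exactly once --- to convert the failure of $\inf X\cap(y,\infty)$ to be attained strictly above $y$ into a strictly descending $\omega$-sequence in $X\cap(y,\infty)$, which together with $y$ is a forbidden copy of $1+\omega^*$ --- and then inject $X$ minus its maximum into $\omega$ via the pairwise disjoint gaps $(y,s(y))$, each of which contains a rational. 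Both arguments are sound; yours is more self-contained (it bypasses both Observation \ref{ordinaliff:obs} and the countable-union-of-countable-sets fact, the latter being a separate consequence of \s{AC}$_\omega(\R)$) and extracts a slightly stronger structural fact, namely that every point of such an $X$ is isolated from above. The paper's route has the advantage of reusing the observation it has just established, at the cost of routing a second application of countable choice through the countable-union lemma.
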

    \begin{proof}
        If $\omega^* \not \le \varphi$ then $\varphi$ is a countable ordinal by Observation \ref{ordinaliff:obs}, so assume $\omega^* \le \varphi$. Fix some $X \in [\R]^\varphi$ and fix a copy of $\omega^*$ in $X$, $x_0 > x_1 > x_2 > \dots,$ say. For $n \in \omega$, write $X_n \coloneqq X \cap (x_n,\infty)$. Then since $1 + \omega^* \not \le \varphi$, $X = \bigcup_{n \in \omega} X_n$, and none of the $X_n$ contain copies of $\omega^*$, so are isomorphic to countable ordinals by Observation \ref{ordinaliff:obs}. Then $X$ is a countable union of countable sets of reals; \s{AC}$_\omega(\R)$ implies that any such union is countable.
        %Let $X \in [(0,1)]^\varphi$. For any $x \in (\inf X, 1)$, $(x,1)\cap X$ contains no copies of $\omega^*$, so by Observation \ref{ordinaliff:obs} it is isomorphic to a countable ordinal. As in the previous proof, let $X_n \coloneqq \left(\inf X, \inf X + \frac{1}{n}\right)\cap X$ and let $\mathcal F : \omega \to X$ be a choice function for $\{X_n : n \in \omega\}$. Now for $n \in \omega$ let $Y_n \coloneqq (\mathcal F(n), 1) \cap X$, so $X \setminus \{\inf X\} = \bigcup_{n \in \omega} Y_n$. By \s{AC}$_\omega(\R)$ we can fix an injection $f_n : Y_n \to \omega$ for each $n \in \omega$, as the $Y_n$ are isomorphic to countable ordinals; then letting $f : X\setminus \{\inf X\} \to \omega\times \omega$ be given by
        %\[f(x) = (f_n(x),n),\]
        %where $n$ is minimal such that $x \in Y_n$, is injective and so witnesses that $X \setminus \{\inf X\}$ is at most countable (and therefore so is $X$). 
    \end{proof}
    We prove Proposition \ref{characterisationofctbleexp:prop} by means of a series of lemmas, all of which are results in \s{ZF} with no additional assumptions about Choice or Dedekind sets.
    \begin{lemma}\label{charpositive:lemma}
        For $k \le n$ natural numbers, \[\Rr \rightarrow (\omega + n)^{\omega + k} \iff \omega \rightarrow (\omega)^\omega\]
        and for any real type $\varphi$ with $\omega \centernot \le \varphi$,
        \[\Rr \rightarrow (\omega + \varphi)^\omega \iff \omega \rightarrow (\omega)^\omega.\]
    \end{lemma}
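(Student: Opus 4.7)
My plan is to prove the two equivalences in parallel. The forward direction of each is a short lifting or monotonicity argument combined with Lemma~\ref{omega:lemma}. For the first equivalence, given $F:[\R]^\omega\to 2$, I define $F':[\R]^{\omega+k}\to 2$ by letting $F'(S)$ be $F$ applied to the initial $\omega$-segment of $S$. A homogeneous $H$ of type $\omega+n$ for $F'$ then yields homogeneity of its initial $\omega$-segment $H_\omega$ for $F$: any $A\in[H_\omega]^\omega$ can be padded with any $k$ of the top $n$ elements of $H$ (possible since $k\le n$) to produce an element of $[H]^{\omega+k}$ whose $F'$-value equals $F(A)$. Combined with Lemma~\ref{omega:lemma}, this gives $\omega\rightarrow(\omega)^\omega$. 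For the second equivalence the forward direction is even more immediate: $\omega\le\omega+\varphi$, so monotonicity and Lemma~\ref{omega:lemma} suffice.

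For the backward direction of the second equivalence, I would fix any copy $V\subseteq(1,2)$ of $\varphi$. Given $F:[\R]^\omega\to 2$, applying $\Rr\rightarrow(\omega)^\omega$ (equivalent to $\omega\rightarrow(\omega)^\omega$ by Lemma~\ref{omega:lemma}) to $F$ restricted to $(-\infty,1)$ yields $H_0\subseteq(-\infty,1)$ of type $\omega$ homogeneous for $F$; I then set $H\coloneqq H_0\cup V$. The key point is that $[H]^\omega=[H_0]^\omega$: any $S\in[H]^\omega$ containing some $v\in V$ would have $\{s\in S:s>v\}$ an $\omega$-subset of $V$, contradicting $\omega\not\le\varphi$. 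Thus $H$ is homogeneous for $F$ and has order type $\omega+\varphi$.

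The main obstacle will be the backward direction of the first equivalence, where the difficulty is reconciling the constant colours arising from different configurations of the top $k$ elements. My plan is to resolve this with the finite Ramsey theorem. I fix $m$ with $m\rightarrow(n)^k_2$, choose $y_1<\dots<y_m$ in $(1,2)$, and fix an enumeration $\{w_i:i\in\omega\}$ of a copy of $\omega$ in $(-\infty,1)$. Given $F:[\R]^{\omega+k}\to 2$, for each $T\in[\{1,\dots,m\}]^k$ I define $F_T:[\omega]^\omega\to 2$ by $F_T(A)=F(\{w_a:a\in A\}\cup\{y_t:t\in T\})$. Iterating $\omega\rightarrow(\omega)^\omega$ $\binom{m}{k}$ times (a finite iteration, hence valid in \s{ZF}) yields an infinite $H'\subseteq\omega$ on which each $F_T$ is constantly some $c_T$. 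The assignment $T\mapsto c_T$ is a $2$-colouring of $[\{1,\dots,m\}]^k$, so by finite Ramsey there is a monochromatic $n$-element set $S\subseteq\{1,\dots,m\}$ of colour $c$. Then $H\coloneqq\{w_i:i\in H'\}\cup\{y_j:j\in S\}$ has order type $\omega+n$, and the usual analysis shows that every $A'\cup T'\in[H]^{\omega+k}$ must have $A'=\{w_a:a\in A\}$ with $A\in[H']^\omega$ and $T'=\{y_t:t\in T\}$ with $T\in[S]^k$, whence $F(A'\cup T')=c_T=c$ and $H$ is homogeneous. Taking only $n$ top elements naively would have yielded $\binom{n}{k}$ potentially distinct $c_T$'s with no reason to agree; the enlarge-and-reduce trick via finite Ramsey is precisely what circumvents this.
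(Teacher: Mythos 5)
Your proposal is correct and follows the same overall strategy as the paper's proof: both forward directions reduce to the minimal relation and then invoke Lemma \ref{omega:lemma} or Corollary \ref{omegaplusk:cor}, and your padding argument for $\Rr \rightarrow (\omega+\varphi)^\omega$ --- restricting the colouring to a copy of $\R$ lying entirely below a fixed copy $V$ of $\varphi$ and observing that $[H_0 \cup V]^\omega = [H_0]^\omega$ because $\omega \centernot\le \varphi$ --- is exactly the argument in the paper. The one substantive difference is in the backward direction of the first equivalence: the paper simply cites \cite[Lemma 7.6]{kleinberg} for the implication $\omega \rightarrow (\omega)^\omega \implies \omega + \omega \rightarrow (\omega+n)^{\omega+k}$ and then fixes a copy of $\omega+\omega$ in $\R$, whereas you prove the implication from scratch: fix $m$ with $m \rightarrow (n)^k_2$, stabilise the $\binom{m}{k}$ colourings $F_T$ by finitely many iterations of $\omega \rightarrow (\omega)^\omega$ (finitely many arbitrary choices, so \textsf{ZF}-admissible), and then apply the finite Ramsey theorem to force the constants $c_T$ to agree on an $n$-element set. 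This is the standard proof of the cited lemma and your decomposition of an arbitrary element of $[H]^{\omega+k}$ into an infinite part from $\{w_i : i \in H'\}$ and exactly $k$ elements from $\{y_j : j \in S\}$ is sound, so your write-up is self-contained where the paper is not. Your lifting argument for the forward direction of the first equivalence (colouring a set of type $\omega+k$ by the $F$-value of its initial $\omega$-segment) is also correct, though it can be replaced by the one-line monotonicity observation that $\omega+k \le \omega+n$, so the hypothesis already yields the minimal relation $\Rr \rightarrow (\omega+k)^{\omega+k}$.
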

    \begin{proof}
        First observe that for $k \le n$ natural numbers, $\Rr \rightarrow (\omega + n)^{\omega + k}$ implies the minimal relation $\Rr \rightarrow (\omega + k)^{\omega+k}$, which by Corollary \ref{omegaplusk:cor} is equivalent to $\omega \rightarrow (\omega)^\omega$, so it suffices to show that $\omega \rightarrow (\omega)^\omega$ implies $\Rr \rightarrow (\omega + n)^{\omega + k}$. Appealing to Ramsey's theorem, one can show that $\omega \rightarrow (\omega)^\omega \implies \omega + \omega \rightarrow (\omega + n)^{\omega + k}$ (see \cite[Lemma 7.6]{kleinberg}); picking any copy of $\omega + \omega$ in $\R$, we deduce that \[\omega \rightarrow (\omega)^\omega \implies \Rr \rightarrow (\omega + n)^{\omega + k}.\]
        
        Now let $\varphi$ be a real type with $\omega \centernot \le \varphi$. Again, $\Rr \rightarrow (\omega + \varphi)^\omega$ implies the minimal relation $\Rr \rightarrow (\omega)^\omega$, which is equivalent to $\omega \rightarrow (\omega)^\omega$ by Lemma \ref{omega:lemma}. We therefore need only show that $\omega \rightarrow (\omega)^\omega \implies \Rr \rightarrow (\omega + \varphi)^\omega$.
        
        So suppose $\omega \rightarrow (\omega)^\omega$, so $\Rr \rightarrow (\omega)^\omega$. Then this relation also holds with $\Rr$ replaced by any isomorphic linear order, such as $\Rrminus$. Let $F : [\R]^\omega \to 2$ be a colouring. Then $F \restriction [\R_{<0}]^\omega$ has a homogeneous set $H \in [\R_{<0}]^\omega$, as $\Rrminus \rightarrow (\omega)^\omega$; letting $K$ be any copy of $\varphi$ in $\Rrplus$, we can ``pad" $H$ out to $H \cup K$ without breaking its homogeneity for $F$, as every element of $[H\cup K]^\omega$ is necessarily an element of $[H]^\omega$. Thus, there is a homogeneous set for $F$ of order type $\omega + \varphi$, and since $F$ was arbitrary we obtain $\Rr \rightarrow (\omega + \varphi)^\omega$.
    \end{proof}
    With the additional assumption of \s{AC}$_\omega(\R)$, a real type $\varphi$ has $\omega \centernot \le \varphi$ iff it is the reverse of a countable ordinal, by Observation \ref{ordinaliff:obs}. Appealing to the symmetry of $\Rr$, we obtain the equivalence of the relations in (a) and (b) of Proposition \ref{characterisationofctbleexp:prop} with $\omega \rightarrow (\omega)^\omega$. Next we show that nothing more is possible for exponents other than $\omega + 1$ and $1 + \omega^*$:
    \begin{lemma}\label{charnegative:lemma}
        For any $k \in \omega$,
        \begin{align*}
            \Rr &\centernot \rightarrow (\omega^* + \omega + k)^{\omega+k}\text{ and}\\
            \Rr &\centernot \rightarrow (\omega + \omega + k)^{\omega + k}.
        \end{align*}
        Moreover, if $k \ge 2$,
        \begin{align*}
            \Rr &\centernot \rightarrow (\omega + \omega)^{\omega + k}\text{ and}\\
            \Rr &\centernot \rightarrow (\omega + \omega^*)^{\omega + k}.
        \end{align*}
    \end{lemma}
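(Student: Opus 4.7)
The plan is to prove each of the four non-arrows by exhibiting an explicit colouring $F \colon [\R]^{\omega+k} \to 2$ and showing that no set of the forbidden order type can be homogeneous for it. All four cases rest on the following structural reduction, which I would establish first. Given $A$ of type $\sigma$ for $\sigma \in \{\omega^* + \omega + k,\omega + \omega + k\}$, decompose $A = A^{\text{pre}} \cup A^{\text{top}}$ where $A^{\text{top}} = \{c_0 < \cdots < c_{k-1}\}$ is the top-$k$ of $A$ and $A^{\text{pre}}$ has type $\omega^* + \omega$ or $\omega + \omega$. A counting argument shows that any $X \in [A]^{\omega+k}$ must contain $A^{\text{top}}$ in its entirety: if some $c_i \notin X$ then $X$ would need $k$ elements strictly above $c_i$, whereas $A$ offers only $k-1$. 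Consequently $X = X^- \cup A^{\text{top}}$, where $X^-$ is a cofinal $\omega$-subsequence of the trailing $\omega$-part of $A^{\text{pre}}$, possibly prefixed by finitely many elements of its leading part. For $\sigma \in \{\omega+\omega,\omega+\omega^*\}$ with $k \ge 2$, an analogous argument shows the top-$k$ of $X$ is a $k$-element subset of $A$'s second component while $X^-$ is cofinal in the first component, and both parts may vary.

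The colouring $F$ is modelled on the removability-index construction of Proposition \ref{sumofinexacts:prop}. Since $X^-$ has type $\omega$ and is therefore inexact, $m(X^-) \coloneqq \min\{n : O_n \text{ meets } X^- \text{ removably}\}$ is well-defined. The first candidate colouring is $F(X) = m(X^-) \bmod 2$. To verify non-monochromacy on $[A]^{\omega+k}$, I would analyse the set $S(A) \coloneqq \{n : O_n$ has right endpoint $< L^*(A)$ and $A \cap O_n \ne \emptyset\}$, where $L^*(A)$ is the supremum of the trailing $\omega$-part of $A$. A short argument shows $\{m(X^-) : X \in [A]^{\omega+k}\} = S(A)$: given $n \in S(A)$, one constructs $X^-$ by including some element of $A \cap O_n$ and omitting the (finitely many in the trailing $\omega$-part of $A^{\text{pre}}$) elements of $A$ lying in $O_{n'}$ for each $n' < n$ with right endpoint $< L^*(A)$, then completing $X^-$ to a cofinal subsequence. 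Since $S(A)$ is always infinite, when $S(A)$ meets both residue classes $F$ is non-monochromatic; for the remaining ``adversarial'' $A$ where $S(A)$ has a single parity, the colouring is refined by using the pair $(m_1(X^-), m_2(X^-))$ of the first two removability indices, or by taking an intrinsic invariant (such as $N(x_0, x_1) < N(x_1, y_0)$) that exploits the relationship between $X^-$ and the top-$k$. For the last two assertions, with top-$k$ unfixed, this refinement combines a removability invariant on $X^-$ with an $N$-value invariant extracted from the top-$k$; the hypothesis $k \ge 2$ is essential because it guarantees a non-trivial ``top gap'' $y_{k-1} - y_{k-2}$ to exploit.

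The main obstacle will be the robustness step: making the colouring reliably non-monochromatic for every $A$ of the forbidden type, not merely those with a generic numerical distribution. This is where the care in choosing the refined invariant enters, and where the proof naturally splits into case analyses for the four assertions. I expect the structural reductions to be routine, the removability construction to follow Proposition \ref{sumofinexacts:prop} closely, and the entire weight of the argument to rest on verifying that the refined invariant produces both colours for every $A$, in particular by correctly trading off the flexibility in choosing $X^-$ against the (possibly variable) structure of the top-$k$.
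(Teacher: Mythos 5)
There are two genuine gaps here. First, your structural reduction is wrong in one of the four cases: for $A \cong \omega + \omega + k$ with $k \ge 1$, a set $X \in [A]^{\omega+k}$ need \emph{not} contain the top $k$ elements of $A$. Writing $A = A_1 \cup A_2 \cup A^{\text{top}}$ with $A_1, A_2 \cong \omega$, the set $X = A_1 \cup \{k \text{ elements of } A_2\}$ has order type $\omega + k$ and misses $A^{\text{top}}$ entirely; the counting argument fails because $X$ only needs $k$ elements above the supremum of its \emph{own} $\omega$-part, not above any particular $c_i$. Consequently $X^-$ can be cofinal in $A_1$ rather than in the trailing block, $\sup X^-$ need not equal $L^*(A)$, and the identity $\{m(X^-) : X \in [A]^{\omega+k}\} = S(A)$ on which your verification rests is computed against the wrong limit point for these $X$. (The reduction is fine for $\omega^* + \omega + k$, since an increasing sequence meets an $\omega^*$-block only finitely often, and for the $k \ge 2$ cases.)

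Second, and more seriously, the colouring $F(X) = m(X^-) \bmod 2$ does not work and you concede as much: for a fixed enumeration $\langle O_n\rangle$ there is no reason the achievable set of indices should meet both residue classes, and everything is deferred to an unspecified ``refined invariant''. That refinement is where the entire content of the lemma lies, so as written this is a sketch with the key step missing. The paper avoids the problem by never reducing a single integer invariant modulo $2$: it instead compares \emph{two} quantities extracted from the same $X$, colouring $\bar{x}$ according to whether $N(x_0,x_1) > N(x_2,x_3)$ for the first pair of non-arrows, and according to whether $N(x_0,x_1) > N(x_\omega,x_{\omega+1})$ for the second (this is exactly where $k \ge 2$ enters). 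Since consecutive gaps of any infinite increasing or decreasing block are pairwise disjoint half-open intervals, their $N$-values are pairwise distinct and hence unbounded; the extra infinite block below the $\omega$-part (respectively, the freedom to move the top-$k$ within the second block) therefore lets one push one of the two compared values above the other while holding the other fixed, and an $N$-increasing subsequence gives the opposite comparison. This makes ``both colours occur inside every forbidden $A$'' a short verification with no adversarial case analysis. Your parenthetical suggestion of an invariant like $N(x_0,x_1) < N(x_1,y_0)$ points in exactly this direction, but it appears only as a fallback and is not developed.
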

    \begin{proof}
    Let $k \in \omega$ and for any $\bar{x} \in [\R]^{\omega + k}$ write $\bar{x}= \{x_\alpha : \alpha < \omega + k\}$. The colouring $F : [\R]^{\omega + k} \to 2$ given by, for $\bar{x} \in [\R]^{\omega+k}$,
	\[F(\bar{x}) = \begin{cases*}
		0 & if $N(x_0,x_1) > N(x_2,x_3)$;\\
		1 & if $N(x_0,x_1) < N(x_2,x_3)$
	\end{cases*}\]
	has no homogeneous set of order type $\omega^* + \omega + k$ or $\omega + \omega + k$, so we obtain $\Rr \centernot \rightarrow (\omega^* + \omega + k)^{\omega + k}$ and $\Rr \centernot \rightarrow (\omega + \omega + k)^{\omega + k}$.

    Now, if moreover $k \ge 2$, the colouring $G : [\R]^{\omega + k} \to 2$ given by, for $\bar{x} \in [\R]^{\omega+k}$,
	\[G(\bar{x}) = \begin{cases*}
		0 & if $N(x_0,x_1) > N(x_{\omega},x_{\omega+1})$;\\
		1 & if $N(x_0,x_1) < N(x_{\omega},x_{\omega+1})$
	\end{cases*}\]
    has no homogeneous set of order type $\omega + \omega$ or $\omega + \omega^*$, and so $\Rr \centernot \rightarrow (\omega + \omega)^{\omega+k}$ and $\Rr \centernot \rightarrow (\omega + \omega^*)^{\omega + k}$.
    \end{proof}
	Symmetric results hold for $k + \omega^*$, and so we have completed the proof of Proposition \ref{characterisationofctbleexp:prop} (a) and (b). We remark here that for for any $\varphi$ which is the order type of a Dedekind set of reals, the colouring $F$ has no homogeneous set ordered as $\varphi + \omega + k$, and the colouring $G$ has no homogeneous set ordered as $\omega + \varphi$, and so in fact, for $1 \neq k \in \omega$, (a) and (b) characterise those $\sigma$ for which the relation $\Rr \rightarrow (\sigma)^{\omega + k}$ is consistent in \s{ZF} (although (b) needs to be rephrased to ``$\Rr \rightarrow (\omega + \varphi)^\omega$ for all $\varphi$ with $\omega \centernot \le \varphi$").

    We turn now to part (c) of Proposition \ref{characterisationofctbleexp:prop}. For some time the question of whether $\Rr \rightarrow (\omega + \sigma)^{\omega +1}$ was possible for some infinite $\sigma$ was open. % Notably, this idea of a colouring comparing $N(x_0,x_1)$ and $N(x_\omega,x_{\omega+1})$ cannot be applied to the case of exponent $\omega + 1$. For some time, the following was an open question: is it consistent with \s{ZF} that $\Rr \rightarrow (\omega + \omega \lor \omega + \omega^*)^{\omega + 1}$?\footnote{Here the $\lor$ indicates that a homogeneous set could either have order type $\omega + \omega$ or order type $\omega + \omega^*$.}
	Recently, this question has been answered in the positive: the author obtained a re-characterisation of the question in terms of colourings of $[\omega]^\omega$ and Schilhan pointed out that this equivalent statement holds in Solovay's model. %\footnote{The statement which holds in Solovay's model is actually an a priori stronger \textit{polarised partition relation}, but as this is not our focus we will not dwell on this fact.%is precisely the polarised partition relation \[\binom{\Rr}{\continuum} \rightarrow \binom{\omega}{\aleph_0}^{\omega,1},\]
    %but since the only places in this paper in which polarised partition relations appear are this result and Lemma \ref{recharonomega:lemma}, where we show the further equivalence with \[\binom{\omega}{\continuum} \rightarrow \binom{\omega}{\aleph_0}^{\omega,1},\]
    %we will not dwell on this fact.}
    The argument is most neatly phrased in terms of \textit{polarised partition relations}:
    \begin{defn}
        For $\Lzero, \Lone$ linear orders, and $\sigma_0,\sigma_1,\tau_0, \tau_1$ order types, the \emph{polarised partition relation}
        \[\binom{\Lzero}{\Lone} \rightarrow \binom{\sigma_0}{\sigma_1}^{\tau_0,\tau_1}\]
        is the statement that for any colouring $F : [L_0]^{\tau_0} \times [L_1]^{\tau_1} \to 2$, there are some $H_0 \in [L_0]^{\sigma_0}$ and $H_1 \in [L_1]^{\sigma_1}$ with the property that $\left| F \im ([H_0]^{\tau_0} \times [H_1]^{\tau_1})\right| = 1$. The pair $H_0$, $H_1$ is said to be \emph{homogeneous} for $F$.
    \end{defn}
    \begin{lemma}\label{recharonreals:lemma}
    Let $\psi$ be a real type with $\omega + 1 \centernot \le \psi$. Then
        \[\Rr \rightarrow (\omega + \psi)^{\omega + 1} \iff \binom{\Rr}{\langle\R_{\ge 0},<\rangle} \rightarrow \binom{\omega}{\psi}^{\omega,1}.\]
        %\item For any collection of colourings $\mathcal C = \{C_z : z \in [0,\infty)\}$, where for each $z \in [0,\infty)$, $C_z : [\Rr]^\omega \to 2$, there is some $H \in [\Rr]^\omega$ and some $K \in [[0,\infty)]^\psi$ such that $H$ is homogeneous for every $C_z$ with $z \in K$.
    \end{lemma}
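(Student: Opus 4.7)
The plan is to prove both directions by translating between colourings of $[\R]^{\omega+1}$ and colourings of $[\R]^\omega \times [\R_{\ge 0}]^1$, hinged on one structural observation: if $H \in [\R]^{\omega + \psi}$ is decomposed as $H = H_\omega \cup H_\psi$ (its initial $\omega$-part and terminal $\psi$-part), then every $(\omega+1)$-subset of $H$ has the form $X \cup \{b\}$ with $X$ a cofinal $\omega$-subset of $H_\omega$ and $b \in H_\psi$. Indeed, the maximum of such a subset cannot lie in $H_\omega$ (else the whole subset would fit in $H_\omega$, which has order type $\omega \not\ge \omega + 1$), and its $\omega$-part cannot meet $H_\psi$ (else an $\omega$-tail would live in $H_\psi$, and together with the maximum above would yield a copy of $\omega + 1$ inside $H_\psi$, contradicting $\omega + 1 \not\le \psi$).

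For the forward direction, fix once and for all an order-isomorphism $\phi : \R \to (-\infty, 0)$ (for instance $\phi(x) = -e^{-x}$). Given $F : [\R]^\omega \times [\R_{\ge 0}]^1 \to 2$, define $F' : [\R]^{\omega + 1} \to 2$ as follows: for $\bar{x} = \{a_n : n \in \omega\} \cup \{b\}$ with $a_n$ strictly increasing and $a_n < b$, setting $c \coloneqq \sup_n a_n \in \R$, let
\[F'(\bar{x}) \coloneqq F\bigl(\{\phi^{-1}(a_n - c) : n \in \omega\},\, \{b - c\}\bigr).\]
Applying $\Rr \rightarrow (\omega + \psi)^{\omega + 1}$ yields a homogeneous $H = H_\omega \cup H_\psi \in [\R]^{\omega + \psi}$; by the structural observation, the value of $c$ is the single real $c_H \coloneqq \sup H_\omega$ across all $(\omega+1)$-subsets of $H$. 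Hence the pair $H_0 \coloneqq \{\phi^{-1}(x - c_H) : x \in H_\omega\} \in [\R]^\omega$ and $H_1 \coloneqq \{b - c_H : b \in H_\psi\} \in [\R_{\ge 0}]^\psi$ is homogeneous for $F$ (note $H_1 \subseteq \R_{\ge 0}$ because $b \ge c_H$ for every $b \in H_\psi$).

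For the backward direction, given $G : [\R]^{\omega + 1} \to 2$, define $F : [\R]^\omega \times [\R_{\ge 0}]^1 \to 2$ by $F(X, \{b\}) \coloneqq G(\phi \im X \cup \{b\})$; this is well-typed because $\phi \im X \subseteq (-\infty, 0)$ has order type $\omega$ and $b \ge 0$, so $\phi \im X \cup \{b\}$ is a copy of $\omega + 1$. The polarised hypothesis furnishes $H_0 \in [\R]^\omega$ and $H_1 \in [\R_{\ge 0}]^\psi$ homogeneous for $F$, and I claim $H \coloneqq \phi \im H_0 \cup H_1 \in [\R]^{\omega + \psi}$ is homogeneous for $G$: by the structural observation, every $(\omega + 1)$-subset of $H$ is of the form $\phi \im X \cup \{b\}$ with $X \in [H_0]^\omega$ and $b \in H_1$, whereupon $G(\phi \im X \cup \{b\}) = F(X, \{b\})$ is constant.

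The main obstacle is the structural observation itself and the care needed to see that the hypothesis $\omega + 1 \not\le \psi$ is exactly what is required; in particular, $\psi$ may well contain copies of $\omega$ internally, but this causes no problem, as any $\omega$-sequence entering $H_\psi$ automatically has an $\omega$-tail there, so the presence of a strict upper bound forces the forbidden $\omega + 1$. Once that observation is in hand, both directions are routine bookkeeping with the fixed explicit isomorphism $\phi$ and the translations $x \mapsto x - c$, and no appeal to Choice is needed.
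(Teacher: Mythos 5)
Your proof is correct and takes essentially the same route as the paper's: shift the top point of each $(\omega+1)$-set by the supremum of its $\omega$-part to land in $\R_{\ge 0}$ for the forward direction, and concatenate a negative copy of $H_0$ with $H_1 \subseteq \R_{\ge 0}$ for the backward direction, with the decomposition of $(\omega+1)$-subsets of an $(\omega+\psi)$-set (which the paper leaves implicit) doing the real work. The extra normalisation of the first coordinate via $\phi^{-1}(\cdot - c)$ is harmless but unnecessary, since the raw $\omega$-part already lies in $[\R]^\omega$.
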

    \begin{proof}
	$(\!\!\implies\!\!)\,$: Assume $\Rr \rightarrow (\omega + \psi)^{\omega + 1}$ and let \[F : [\R]^\omega \times [\R_{\ge 0}]^1 \to 2\] be a colouring. We wish to find some $H_0 \in [\R]^\omega$, $H_1 \in [\R_{\ge 0}]^\psi$ such that $F$ is constant on $[H_0]^\omega\times [H_1]^1$. $F$ induces a colouring $F' : [\R]^{\omega+1} \to 2$ in the following way: for $\bar{x} = \langle x_0,x_1,x_2,\dots,x_\omega\rangle,$ we treat the maximal element $x_\omega$ as a ``shifted" element of $\R_{\ge 0}$; write $x' \coloneqq \sup_{n < \omega}x_n$, so $x_\omega - x' \in [0,\infty)$, and set
	\[F'(\bar{x})\coloneqq F(\bar{x}\restriction \omega,\langle x_\omega - x'\rangle).\]
	By assumption, this colouring has a homogeneous set $H \in [\R]^{\omega + \psi}$. Write $H = H_0 \cup K$, where $H_0$ is the initial segment of $H$ ordered as $\omega$ and $K$ is the final segment of $H$ above $H_0$, ordered as $\psi$. Now, writing $h \coloneqq \sup H_0$, set $H_1 \coloneqq \{x \in \R_{\ge 0} : x + h \in K\}$. Then $F$ is constant on $[H_0]^\omega\times [H_1]^1$, as required.
		
	$(\!\!\impliedby\!\!)\,$: Let $G: [\R]^{\omega+1} \to 2$. We use this to induce a colouring $G': [\R_{<0}]^\omega \times [\R_{\ge 0}]^1 \to 2$ in the following way: for $\bar{x} \in [\R_{<0}]^\omega$, $y \in \R_{\ge 0}$,
 
    \[G'(\bar{x},\langle y\rangle) \coloneqq G(\bar{x}^\frown \langle y \rangle).\]
    Since $\Rrminus$ is order-isomorphic to $\Rr$, our assumption that
    
    \[\binom{\Rr}{\langle\R_{\ge 0},<\rangle} \rightarrow \binom{\omega}{\psi}^{\omega,1}\]
    gives some $H_0 \in [\R_{<0}]^\omega$, $H_1 \in [\R_{\ge 0}]^\psi$ homogeneous for $G'$, i.e.\ with the property that $\left|G' \im ([H_0]^\omega\times [H_1]^1)\right| = 1$; but since $H_0 \subseteq \R_{<0}$ and $H_1 \subseteq \R_{\ge 0}$, $H \coloneqq H_0 \cup H_1$ has order type $\omega + \psi$; since any element of $[H]^{\omega+1}$ consists of an $\omega$-sequence from $H_0$ followed by a single element of $H_1$, it follows by definition of $G'$ that $\left| G \im [H]^{\omega + 1} \right| = 1$.
    \end{proof}
    \begin{lemma}\label{recharonomega:lemma} For any real type $\psi$,
	\[\binom{\Rr}{\langle\R_{\ge 0},<\rangle} \rightarrow \binom{\omega}{\psi}^{\omega,1} \iff \binom{\omega}{\langle\R_{\ge 0},<\rangle} \rightarrow \binom{\omega}{\psi}^{\omega,1}.\]
    \end{lemma}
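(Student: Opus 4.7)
The plan is to prove the two implications separately. For the easy direction (right to left), given any colouring $F : [\R]^\omega \times [\R_{\ge 0}]^1 \to 2$, I would fix a copy of $\omega$ inside $\R$ (e.g.\ $\mathbb{N}$) and restrict $F$ to $[\omega]^\omega \times [\R_{\ge 0}]^1$. The hypothesis on the right-hand side then yields a homogeneous pair $H_0 \in [\omega]^\omega \subseteq [\R]^\omega$ and $H_1 \in [\R_{\ge 0}]^\psi$, and this pair remains homogeneous for $F$ itself, since no new $\omega$-subsets of $H_0$ are introduced by viewing $H_0$ inside $\R$.

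For the forward direction I would reprise the $N$-function trick from the proof of Lemma \ref{omega:lemma}. Given $F : [\omega]^\omega \times [\R_{\ge 0}]^1 \to 2$, I would define an extension $F' : [\R]^\omega \times [\R_{\ge 0}]^1 \to 2$ by setting $F'(\bar{x}, \langle y \rangle) = F(N(\bar{x}), \langle y \rangle)$ whenever $\bar{x}$ is $N$-increasing (so that $N(\bar{x}) \in [\omega]^\omega$), and $F'(\bar{x}, \langle y \rangle) = 0$ otherwise. The left-hand-side hypothesis applied to $F'$ produces $H_0 \in [\R]^\omega$ and $H_1 \in [\R_{\ge 0}]^\psi$ homogeneous for $F'$.

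To convert this into a homogeneous pair for $F$, I would invoke the two claims embedded in Lemma \ref{omega:lemma}. Claim \ref{nincreasingdense:claim} lets me pass to an $N$-increasing subsequence $\bar{y} \in [H_0]^\omega$, so that $(\bar{y}, H_1)$ is still homogeneous for $F'$. Then for any $A \in [N(\bar{y})]^\omega$, Claim \ref{nincreasingbij:claim} supplies an $N$-increasing $\bar{z}_A \in [\bar{y}]^\omega \subseteq [H_0]^\omega$ with $N(\bar{z}_A) = A$; for any $y \in H_1$,
\[F(A, \langle y \rangle) = F'(\bar{z}_A, \langle y \rangle),\]
which is constant by homogeneity. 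Hence $N(\bar{y}) \in [\omega]^\omega$ together with $H_1$ is the required homogeneous pair for $F$, establishing the right-hand-side relation.

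I do not anticipate any serious obstacle here: the essential combinatorics was already done in Lemma \ref{omega:lemma}, and the polarised setting introduces nothing new because the second coordinate has exponent $1$, so the set $H_1$ passes through the entire construction untouched, interacting neither with the $N$-coding on the first coordinate nor with the extraction of the $N$-increasing subsequence $\bar{y}$.
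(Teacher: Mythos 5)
Your proposal is correct and matches the paper's argument in all essentials: the easy direction is the same restriction argument, and the forward direction uses the same $N$-coding together with Claims \ref{nincreasingdense:claim} and \ref{nincreasingbij:claim}, with the second coordinate carried along passively. The only cosmetic difference is that the paper phrases the forward direction as a contrapositive (a bad colouring of $[\omega]^\omega \times [\R_{\ge 0}]^1$ induces a bad colouring of $[\R]^\omega \times [\R_{\ge 0}]^1$), whereas you argue directly; the underlying construction is identical.
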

    \begin{proof}
	This is similar to the proof of Lemma \ref{omega:lemma}.
	
	$(\!\!\impliedby\!\!)\,$: A colouring $F: [\R]^\omega \times [\R_{\ge 0}]^1 \to 2$ restricts to a colouring $F' : [\omega]^\omega \times [\R_{\ge 0}]^1 \to 2$. By assumption there are some $H_0 \in [\omega]^\omega$ and $H_1 \in [\R_{\ge 0}]^\psi$ homogeneous for $F'$, i.e.\ with $\left|F' \im [H_0]^\omega \times [H_1]^1\right| = 1$; but then $H_0, H_1$ are homogeneous for $F$ also.%Consider the restriction of the colourings in question to the $\omega$-sequences of natural numbers; these are colourings of $[\omega]^\omega$, and so by assumption some $\aleph_0$-many of them have a homogeneous set in common, which is homogeneous for the original colourings too.
		
	$(\!\implies\!)\,$: We prove the contrapositive, following the proof of Lemma \ref{omega:lemma}. As in that proof, for any colouring $G : [\omega]^\omega \times [\R_{\ge 0}]^1 \to 2$ we get an induced colouring $G' : [\R]^\omega \times [\R_{\ge 0}]^1 \to 2$ by, in the first coordinate, colouring the $N$-sequences of $N$-increasing $\omega$-sequences of reals.
    
    Now, if $G : [\omega]^\omega \times [\R_{\ge 0}]^1 \to 2$ is a colouring witnessing that
    \[\binom{\omega}{\langle\R_{\ge 0},<\rangle} \centernot \rightarrow \binom{\omega}{\psi}^{\omega,1},\]i.e.\ is such that there are no $H_0 \in [\omega]^\omega$, $H_1 \in [\R_{\ge 0}]^\psi$ homogeneous for $G$, then the corresponding induced colouring $G'$ will also have no homogeneous pair $H_0 \in [\R]^\omega$, $H_1 \in [\R_{\ge 0}]^\psi$, and so witnesses that
    \[\binom{\Rr}{\langle\R_{\ge 0},<\rangle} \centernot\rightarrow \binom{\omega}{\psi}^{\omega,1}.\]
    \end{proof}
    \begin{prop}\label{schilhan:prop}
	In Solovay's model,
    \[\binom{\omega}{\langle\R_{\ge 0},<\rangle} \rightarrow \binom{\omega}{\eta}^{\omega,1},\]
    where $\eta$ is the order type of $\langle \Q,<\rangle$. In particular, in Solovay's model it holds that for any countable $\psi$ with $\omega + 1 \centernot \le \psi$,  $\Rr \rightarrow (\omega + \psi)^{\omega+1}$.
    \end{prop}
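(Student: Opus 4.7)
The plan is to combine Mathias's theorem (which in Solovay's model yields $\omega\rightarrow(\omega)^\omega$) with the Baire Property of sets of reals to produce both $H_0$ and $H_1$ simultaneously. Given $F:[\omega]^\omega\times[\R_{\ge 0}]^1\to 2$, for each $q\in\Q_{\ge 0}$ and each infinite $B\subseteq\omega$, Mathias's theorem applied to $F_q:=F(\cdot,\langle q\rangle)$ restricted to $[B]^\omega$ yields an infinite $B'\subseteq B$ on which $F_q$ is constantly some color. Define a canonical color function $c(q)\in\{0,1\}$ by a uniform rule (e.g.\ $c(q)=0$ if color $0$ is Mathias-realisable on $[\omega]^\omega$, else $c(q)=1$). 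Since $c$ is defined inside Solovay's model, $c^{-1}(0)$ and $c^{-1}(1)$ are subsets of $\Q_{\ge 0}$ with the Baire Property.

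The union $c^{-1}(0)\cup c^{-1}(1)=\Q_{\ge 0}$ is dense in $\R_{\ge 0}$, so by the Baire category theorem the closure of at least one of these sets contains a rational open interval; fix such an interval $I$ and let $\epsilon\in\{0,1\}$ be the corresponding color, so $c^{-1}(\epsilon)$ is dense in $I$. Enumerate a countable basis $\{J_n:n\in\omega\}$ of nonempty open rational sub-intervals of $I$.

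Using \s{DC}, build $H_0$ and $H_1$ by induction. Starting from $H_0^{(0)}=\omega$, at stage $n$ pick $q_n\in J_n\cap c^{-1}(\epsilon)$ and apply Mathias's theorem inside $H_0^{(n)}$ to find an infinite $H_0^{(n+1)}\subseteq H_0^{(n)}$ with $F_{q_n}\equiv\epsilon$ on $[H_0^{(n+1)}]^\omega$. A fusion strategy preserves a growing committed sequence $\{h_0<h_1<\ldots\}\subseteq\bigcap_n H_0^{(n)}$, so that $H_0:=\bigcap_n H_0^{(n)}$ is infinite; by construction $F_{q_k}\equiv\epsilon$ on $[H_0]^\omega$ for every $k$. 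Setting $H_1:=\{q_n:n\in\omega\}$ yields a countable dense subset of $I$ of order type $\eta$, and $(H_0,H_1)$ is homogeneous for $F$ with color $\epsilon$.

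The hard part is managing the fusion so that refining $H_0^{(n)}$ for $F_{q_n}$-homogeneity does not disturb the committed $h_0,\ldots,h_{n-1}$. Because different background sets can yield different canonical colors---e.g.\ the coloring $F_q(A)=\mathbbm{1}_{A\subseteq\mathrm{evens}}$ is $0$-homogeneable on the odd integers but $1$-homogeneable on the evens---$q_n$ must be chosen so that $F_{q_n}$ is $\epsilon$-homogeneable on subsets of $H_0^{(n)}$ that contain any prescribed subset of $\{h_0,\ldots,h_{n-1}\}$, equivalently so that the finitely many shifted colorings $B\mapsto F_{q_n}(B\cup S)$ for $S\subseteq\{h_0,\ldots,h_{n-1}\}$ each have canonical color $\epsilon$ relative to the appropriate restriction of $H_0^{(n)}$; since each such ``shifted canonical color'' set has BP, and finite intersections of comeager sets are comeager, such a $q_n$ can be found in $J_n$. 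The ``in particular'' conclusion follows from Lemmas~\ref{recharonreals:lemma} and~\ref{recharonomega:lemma}, combined with Cantor's theorem that every countable linear order embeds into $\eta$: the homogeneous $H_1\cong\eta$ then contains a subset of order type $\psi$ for any countable $\psi$ with $\omega+1\not\le\psi$.
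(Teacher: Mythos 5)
There is a genuine gap, and it sits exactly where you flag the ``hard part'': the existence of the required $q_n\in J_n$ at each stage of the fusion. Your justification --- ``each such shifted canonical colour set has BP, and finite intersections of comeager sets are comeager'' --- does not apply: the sets in question are subsets of $\Q_{\ge 0}$, hence meager in $\R$, so none of them is comeager and the BP gives you nothing; and no argument is offered that they are comeager (or even dense) in $\Q\cap J_n$. What your Baire category step actually establishes is only that $c^{-1}(\epsilon)$ is dense in $I$, where $c$ is the \emph{unshifted} canonical colour computed relative to the \emph{full} reservoir $\omega$. As you yourself observe with the evens/odds example, the canonical colour is not stable under shrinking the reservoir or adding a finite shift $S$: the colouring $F_q(A)=\mathbbm{1}_{A\cap\text{evens}\neq\emptyset}$ has $c(q)=0$ but $F_q^{\{h_0\}}\equiv 1$ whenever $h_0$ is even, so the set of $q$ whose shifted colour relative to $H_0^{(n)}$ is $\epsilon$ can be disjoint from $c^{-1}(\epsilon)$, and the intersection of the $2^n$ relevant sets can be empty in $J_n$. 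Nothing in the argument rules out an adversarial $F$ for which the fusion gets stuck at a finite stage. A structural warning sign: your proof uses only $\omega\rightarrow(\omega)^\omega$, the Baire Property, and \s{DC}, all of which hold under \s{AD}; if it worked it would show that relation (1) of Question 1 follows from relation (4) in that setting, which the paper explicitly leaves open (indeed it leaves open even the \s{AD}$_\R$ case in Question 2).

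The paper's proof circumvents precisely this obstacle by not treating the colourings $F_z$ one at a time. It uses the fine structure of Solovay's model: $F$ is definable from a single real $a$, via $F(x,\langle z\rangle)=1\iff V[a][x][z]\models\Phi(a,x,z)$, and then the pure decision property of Mathias forcing over $V[a]$, applied to the formula $\Phi(\check a,\dot r,z)$ separately for each parameter $z\in\R^{V[a]}$, shows that one single Mathias real $r$ over $V[a]$ is almost homogeneous for \emph{every} $F_z$ with $z\in\R^{V[a]}$ simultaneously --- this uniformity over uncountably many $z$ is exactly what your stagewise construction cannot manufacture. A pigeonhole over the (still uncountable) set $\R^{V[a]}$ then fixes a single tail $r\setminus n^*$ and a single colour working for an uncountable set of $z$, from which a copy of $\eta$ is extracted. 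Your concluding reduction of the ``in particular'' clause via Lemmas \ref{recharonreals:lemma} and \ref{recharonomega:lemma} and the universality of $\eta$ is correct and matches the paper, but the main claim is not established by your argument.
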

    \begin{proof}
	This argument is due to Jonathan Schilhan and we include it with his permission. It is based on the argument from \cite{mathias} that $\omega \rightarrow (\omega)^\omega$ in Solovay's model. Work in the model $W \coloneqq L(\R)^{V[G]}$, where $V$ is a ground model containing an inaccessible cardinal $\kappa$ and $G$ is a generic filter over $V$ for the Lévy collapse forcing Coll$(\omega,<\!\!\kappa)$. In $W$, let $F : [\omega]^\omega \times [\R_{\ge 0}]^1 \to 2$ be a colouring, %$\mathcal C \coloneqq \{C_z : z \in \R\}$ be a collection of $\continuum$-many $2$-colourings of $[\omega]^\omega$, indexed by $\R$, %$\mathcal C: [\omega]^\omega \times \R \to 2$ be a collection of $\continuum$-many $2$-colourings of $[\omega]^\omega$, treated as a function of two variables,
    and let $a \in \R$, $\Phi$ a formula be such that for all $x \in [\omega]^\omega, z \in \R$,
	\[W \models F(x,\langle z \rangle) = 1 \iff V[a][x][z] \models \Phi(a,x,z).\]
    In particular, we have for $z \in \R^{V[a]}$ that
        \[W \models F(x, \langle z \rangle) = 1 \iff V[a][x]\models \Phi(a,x,z).\]
    We will think of the second component as indexing a collection of $\continuum$-many colourings of $[\omega]^\omega$. For $z \in \R_{\ge 0}$, write $F_z$ for the colouring $F_z : [\omega]^\omega \to 2$ given by
    \[F_z(x) \coloneqq F(x,\langle z \rangle).\]
	
    Then we can rephrase the problem like so: we wish to find some $H_1 \in [\R_{\ge 0}]^\eta$ such that there is some $H_0 \in [\omega]^\omega$ which is simultaneously homogeneous for each $F_z$ with $z \in H_1$ (with the same colour for each $z$). We proceed to show that we can find exactly this, with $H_1$ in fact consisting entirely of members of $V[a]$. Let $\mathbb M$ denote Mathias forcing in $V[a]$.
            
    \begin{claim}\label{mathiasreal:claim} If $r \in [\omega]^\omega \cap W$ is a Mathias real over $V[a]$ then for all $z \in \R_{\ge 0}^{V[a]}$, $r$ is almost homogeneous for $F_z$, i.e.\ for each $z \in \R_{\ge 0}^{V[a]}$, there is some $n \in \omega$ such that $r \setminus n$ is homogeneous for $F_z$.\end{claim}
    \begin{claimproof}
    We prove the following slightly more general claim: let $\Psi$ be any formula in two free variables, potentially with parameters in $V[a]$ which we suppress. Then a Mathias real $r$ over $V[a]$ has the property that for some $n \in [\omega]$, every $x \in [r\setminus n]^\omega$ agrees on the question of whether $V[a][x] \models \Psi(a,x)$. To complete the proof we consider the particular case where $\Psi(a,x)$ is $\Phi(a,x,z)$ as defined above.

    For any $A \in [\omega]^\omega \cap V[a]$, consider the $\mathbb M$-condition $\langle \emptyset, A\rangle$. By the fact that Mathias forcing $\mathbb M$ has pure decision, there is an $\mathbb M$-condition extending $\langle \emptyset, A\rangle$ with the same stem, i.e.\ a condition $\langle \emptyset,B\rangle$ for some $B \in [A]^\omega \cap V[a]$, such that $\langle \emptyset,B\rangle \parallel \Psi(\check{a},\dot{r})$, where $\dot{r}$ is the canonical name for the Mathias real. It follows that $\mathcal D \coloneqq \{\langle s,A\rangle : \langle \emptyset, A\rangle \parallel \Psi(\check{a},\dot{r})\}$ is dense, so meets any generic filter. But now if $r$ is a Mathias real over $V[a]$ with corresponding generic filter $H \subseteq \mathbb{M}$ and $\langle s,A\rangle \in H \cap \mathcal D$, then $r' \coloneqq r \setminus s = r \setminus (\max s + 1)$ is a Mathias real over $V[a]$ with $\langle \emptyset, A\rangle$ in its generic filter; since $\langle \emptyset,A \rangle\parallel \Psi(\check{a},\dot{r})$ and every infinite subset of $r'$ is again a Mathias real with $\langle \emptyset, A \rangle$ in its generic filter, we have that every $x \in [r']^\omega$ agrees on whether $V[a][x]\models \Psi(a,x)$.\end{claimproof}

	Since the set of dense subsets of $\mathbb M$ in $V[a]$ is countable in $W$, $W$ contains Mathias reals over $V[a]$. Fix some $r \in [\omega]^\omega \cap W$ a Mathias real over $V[a]$ and work now in $V[a][r]$. For any $z \in \R^{V[a]}$, let $n_z \in \omega$ be minimal such that $r\setminus n_z$ is homogeneous for the colouring $F_z$. Now, in $V[a][r]$, $\R^{V[a]}$ is still uncountable, as Mathias forcing is proper. By the pigeonhole principle in $V[a][r]$, there is therefore some $n^* \in \omega$ and a set $A \subseteq \R^{V[a]}$ which is uncountable in $V[a][r]$ with the property that for all $z \in A$, $n_z = n^*$. In particular, $A$ contains a subset ordered as $\eta$, $A'$, say. Then in $W$, $A' \in [\R_{\ge 0}]^\eta$ has the property that $r \setminus n^*$ is homogeneous for every $F_z$ with $z \in A'$; the colour taken by $F_z$ on $[r \setminus n^*]^\omega$ is not necessarily the same for all $z \in A'$, but we can reduce again to some $H_1 \in [A']^\eta$ such that this colour is the same for all $z \in H_1$. Now, setting $H_0 \coloneqq r \setminus n^*$, we have found a homogeneous pair for $F$ of the desired order types.

    In particular, for $\psi$ any countable order type with $\omega + 1 \not \le \psi$ we have $\psi \le \eta$ (by the universality of $\eta$) and so, applying Lemma \ref{recharonreals:lemma} and Lemma \ref{recharonomega:lemma}, we conclude that $\Rr \rightarrow (\omega + \psi)^{\omega + 1}$.
    \end{proof}

    We remark that we could have started with the assumption that $\psi$ was \textit{any} real type with $\omega + 1 \not \le \psi$, as any such $\psi$ is countable under \s{AC}$_\omega(\R)$ by Observation \ref{surordinalcountable:obs}.

    William Chan has pointed out that the proof of Proposition \ref{schilhan:prop} can be adapted to the setting of \s{AD}$^+$ or \s{AD}$_\R$: using the existence of an $\infty$-Borel code $(S,\Phi)$ for the colouring, one can run the same argument, replacing the model $V[a]$ with the model $L[S]$. This answers a question from the original version of this paper, and characterises the IEPRs which hold on $\Rr$ under \s{AD}$^+$ or \s{AD}$_\R$: they are identical to those which hold in Solovay's model (i.e.\ the relations with $\tau$ countable mentioned in Proposition \ref{characterisationofctbleexp:prop}, and no relations with $\tau$ uncountable), as each of $\omega \rightarrow (\omega)^\omega$, \s{AC}$_\omega(\R)$, and the statement that all sets of reals are inexact follow from either \s{AD}$^+$ or \s{AD}$_\R$.
    
    \section{Further work}\label{furtherwork:section}
    A natural direction for further exploration is to consider infinite-exponent partition relations on other ill-founded orders. We have obtained a number of results concerning such IEPRs on generalisations of the reals, by which we mean orders of the form $\twoalphalex$ for $\alpha > \omega$ an ordinal, by combining ideas from this paper with ideas from \cite{crtolo}; these results will appear in \cite{higheranalogues}. Some questions still remain even in the setting of countable-exponent relations on the reals:
    \begin{q}
        Which of the following statements are equivalent?
        \begin{equation}
            \binom{\omega}{\langle\R_{\ge 0},<\rangle} \rightarrow \binom{\omega}{\eta}^{\omega,1}
        \end{equation}
        \begin{equation}
            \Rr \rightarrow (\omega + \omega)^{\omega + 1}
        \end{equation}
        \begin{equation}
            \Rr \rightarrow (\omega + \omega \lor \omega + \omega^*)^{\omega + 1}
        \end{equation}
        \begin{equation}
            \omega \rightarrow (\omega)^\omega
        \end{equation}
        % \begin{enumerate}
        %     \item \[\binom{\omega}{\langle\R_{\ge 0},<\rangle} \rightarrow \binom{\omega}{\eta}^{\omega,1}\]
        %     \item $\Rr \rightarrow (\omega + \omega)^{\omega + 1}$
        %     \item $\Rr \rightarrow (\omega + \omega \lor \omega + \omega^*)^{\omega + 1}$
        %     \item $\omega \rightarrow (\omega)^\omega$
        % \end{enumerate}
    \end{q}
    An equivalent formulation is ``Which of the implications $(1) \implies (2) \implies (3) \implies (4)$ can be reversed?". Here the $\lor$ in relation (3) indicates that the homogeneous set can have \textit{either} order type $\omega + \omega$ or $\omega + \omega^*$.
    % \begin{q}
    %     Does an analogue of Proposition \ref{schilhan:prop} hold under \s{AD}$_\R$?
    % \end{q}
    % Each of $\omega \rightarrow (\omega)^\omega$, \s{AC}$_\omega(\R)$, and the statement that all sets of reals are inexact follow from \s{AD}$_\R$ (the \textit{Axiom of Determinacy for games on the reals}), so by Proposition \ref{characterisationofctbleexp:prop} and Corollary \ref{thm2:cor} we nearly have a complete characterisation of the infinite-exponent partition relations which hold on $\Rr$ in models of \s{AD}$_\R$; an answer to this question would complete that characterisation.
    \begin{q}
        Is the assumption of \s{AC}$_\omega(\R)$ necessary for Proposition \ref{characterisationofctbleexp:prop}(c)? Is it consistent with \s{ZF} that for $\psi$ the order type of some Dedekind set of reals,
        \[\Rr \rightarrow (\omega + \psi)^{\omega + 1}?\]
    \end{q}
    %If not, we can drop the assumption of \s{AC}$_\omega(\R)$ in Proposition \ref{characterisationofctbleexp:prop} (again needing to rephrase (b) as in the remark after Lemma \ref{charnegative:lemma}). 
    %
    In the setting of uncountable exponents we have the following question, which would generalise Theorem \ref{mainthm} and provide a full classification of the relation $\Rr \rightarrow (\tau)^\tau$:
    \begin{q}
	If $\tau$ is an inexact real type which cannot be written as a sum of two inexact order types, is it the case that \[\Rr \rightarrow (\tau)^\tau \iff \omega \rightarrow (\omega)^\omega?\]
    \end{q}

    Finally, the following question which is unrelated to partition relations naturally presents itself in light of Corollary \ref{pspsolovay:cor}:
    \begin{q}
        What is the consistency strength of the statement ``every infinite set of reals is inexact"? Can this statement be separated from the statement ``every set of reals has the PSP"?
    \end{q}
    On the level of individual sets, inexactness is a strict consequence of the PSP; under \s{PFA}, an $\aleph_1$-dense set of reals is inexact but does not have the PSP, as it has cardinality strictly between $\aleph_0$ and $2^{\aleph_0} = \aleph_2$.
    
\end{document}